\DeclareFontFamily{U}{mathx}{\hyphenchar\font45}
\DeclareFontShape{U}{mathx}{m}{n}{
	<5> <6> <7> <8> <9> <10>
	<10.95> <12> <14.4> <17.28> <20.74> <24.88>
	mathx10
}{}
\DeclareSymbolFont{mathx}{U}{mathx}{m}{n}
\DeclareMathAccent{\widecheck}{0}{mathx}{"71}
\def\citep#1#2{\cite[{#1}]{#2}}
\newcommand{\RefThm}[1]{Theorem~\textup{\ref{#1}}}
\newcommand{\RefAlg}[1]{Algorithm~\textup{\ref{#1}}}
\newcommand{\bbN}{{\mathbb{N}}}
\newcommand{\bbZ}{{\mathbb{Z}}}
\newcommand{\N}{{\mathbb{N}}} % natural numbers {1, 2, ...}
\newcommand{\R}{{\mathbb{R}}} % reals
\newcommand{\Z}{{\mathbb{Z}}} % integers
\newcommand{\RR}{{\mathbb{R}}} % reals
\newcommand{\ZZ}{{\mathbb{Z}}} % integers
\DeclareSymbolFont{bbold}{U}{bbold}{m}{n}
\DeclareSymbolFontAlphabet{\mathbbold}{bbold}
\newcommand{\bsm}{{\boldsymbol{m}}}
\newcommand{\bsp}{{\boldsymbol{p}}}
\newcommand{\bsx}{{\boldsymbol{x}}}
\newcommand{\bsz}{{\boldsymbol{z}}}
\newcommand{\bszero}{{\boldsymbol{0}}} % vector of zeros
\newcommand{\bsgamma}{{\boldsymbol{\gamma}}}
\newcommand{\bstau}{{\boldsymbol{\tau}}}
\newcommand{\calF}{{\mathcal{F}}}
\newcommand{\calO}{{\mathcal{O}}}
\newcommand{\setu}{{\mathfrak{u}}}
\newcommand{\setv}{{\mathfrak{v}}}
\newcommand{\rme}{{\mathrm{e}}}
\newcommand{\icomp}{\mathrm{i}}
\newcommand{\abs}[1]{\left\vert#1\right\vert}
\newcommand{\norm}[1]{\left\Vert#1\right\Vert}
\newcommand{\rd}{\,\mathrm{d}} % differential symbol with tiny space in front for use in integrals
\providecommand{\argmin}{\operatorname*{argmin}}
\newcommand{\mcol}{{\mathpunct{:}}}
\providecommand*{\toclevel@author}{999}
\providecommand*{\toclevel@title}{0}
\theoremstyle{plain}
\newtheorem{theorem}{Theorem}
\newtheorem{proposition}{Proposition}
\newtheorem{lemma}{Lemma}
\newtheorem{corollary}{Corollary}
\theoremstyle{definition}
\newtheorem{definition}{Definition}
\theoremstyle{remark}
\newtheorem{remark}{Remark}
\pgfplotsset{every tick label/.append style={font=\scriptsize}}
\newenvironment{customlegend}[1][]{
	\begingroup
	% inits/clears the lists (which might be populated from previous axes):
	\csname pgfplots@init@cleared@structures\endcsname
	\pgfplotsset{#1}
}{
	% draws the legend:
	\csname pgfplots@createlegend\endcsname
	\endgroup
}
\def\addlegendimage{\csname pgfplots@addlegendimage\endcsname}
\pgfplotsset{
	cycle list={%
		{draw=black,mark=star,solid},
		{draw=black, mark=square,solid},%densely dashed},
		{draw=black,mark=+,solid},%dashdotted}, %every mark/.append style={rotate=90},
		{black,mark=o},
		{draw=black, mark=none,solid}}
}
\definecolor{mycolor1}{rgb}{0.48500,0.70000,1.00000}
\definecolor{mycolor2}{rgb}{0.00000,0.20000,0.50000}
\definecolor{mycolor-alpha1}{rgb}{0.30000,0.45000,0.85000}
\definecolor{mycolor-alpha2}{rgb}{0.05000,0.65000,0.20000}
\definecolor{mycolor-alpha3}{rgb}{0.64000,0.22500,0.75000}
\definecolor{mycolor3}{rgb}{0.10000,0.80000,0.15000}
\definecolor{mycolor4}{rgb}{0.00000,0.50000,0.25000}
\definecolor{mycolor5}{rgb}{0.84000,0.29000,1.00000}
\definecolor{mycolor6}{rgb}{0.44000,0.16500,0.50000}
\definecolor{mycolor1-fig}{rgb}{0.48500,0.70000,1.00000}%
\definecolor{mycolor2-fig}{rgb}{0.00000,0.20000,0.50000}%
\definecolor{mycolor3-fig}{rgb}{0.85000,0.11800,0.09800}%
\definecolor{mycolor4-fig}{rgb}{0.10000,0.80000,0.15000}%
\definecolor{mycolor5-fig}{rgb}{0.00000,0.50000,0.25000}%
\definecolor{mycolor6-fig}{rgb}{0.84000,0.29000,1.00000}%
\definecolor{mycolor7-fig}{rgb}{0.44000,0.16500,0.50000}%
\newcommand{\tpmod}[1]{{\;(\operatorname{mod}\;#1)}}
\newcommand{\supp}{\operatorname{supp}}
\begin{document}
	
\title{Digit-by-digit and component-by-component constructions of lattice rules for periodic functions with unknown smoothness}

\author{Adrian Ebert, Peter Kritzer, Dirk Nuyens, Onyekachi Osisiogu}

\date{\today}

\maketitle

\begin{abstract}
    Lattice rules are among the most prominently studied quasi-Monte Carlo methods to approximate multivariate integrals.
    A rank-$1$ lattice rule to approximate an $s$-dimensional integral is fully specified by its \emph{generating vector} $\bsz \in \bbZ^s$ and its number of points~$N$.
    While there are many results on the existence of ``good'' rank-$1$ lattice rules, there are no explicit constructions for good generating vectors for dimensions $s \ge 3$. 
    This is why one usually resorts to computer search algorithms. Motivated by earlier work of Korobov from 1963 and 1982, more specifically \cite{Kor63} and 
    \cite{Kor82}, we present two variants of search algorithms for good lattice rules and show that the resulting rules exhibit 
    a convergence rate in weighted function spaces that can be arbitrarily close to the optimal rate. Moreover, contrary to most other algorithms, we do not need to 
    know the smoothness of our integrands in advance, the generating vector will still recover the convergence rate associated with the smoothness of 
    the particular integrand, and, under appropriate conditions on the weights, the error bounds can be stated without dependence on $s$.
    The search algorithms presented in this paper are two variants of the well-known \emph{component-by-component (CBC) construction}, one of which is combined with a \emph{digit-by-digit (DBD) construction}. We present numerical results for both algorithms using fast construction algorithms in the case of product weights. 
    They confirm our theoretical findings.
\end{abstract}

\noindent\textbf{Keywords:} Numerical integration; lattice points; quasi-Monte Carlo methods; weighted function spaces; 
digit-by-digit construction; component-by-component construction; fast construction. 

\noindent\textbf{2010 MSC:} 65D30, 65D32, 41A55, 41A63.

\section{Introduction} \label{sec:intro}

The present paper is studying the efficient construction of high-dimensional \emph{quadrature rules} 
(also referred to as \emph{cubature rules} when $s \ge 3$) for numerically approximating $s$-dimensional integrals
\begin{equation*}
	I(f)
	:=
	\int_{[0,1]^s} f(\bsx) \rd\bsx
	\quad\approx\quad
	Q_N(f, \{(w_k,\bsx_k)\}_{k=0}^{N-1})
	:=
	\sum_{k=0}^{N-1} w_k \, f(\bsx_k)
	,
\end{equation*}
where $\bsx_0,\ldots,\bsx_{N-1}\in [0,1]^s$ and $w_0,\ldots,w_{N-1}\in\R$, and where we assume that the integrand $f$ lies in a Banach space $(\calF,\norm{\cdot}_{\calF})$.
In this paper we assume the quadrature nodes and weights $\{(w_k,\bsx_k)\}_{k=0}^{N-1}$ to be chosen deterministically. 
The quality of $Q_N$, or equivalently, of the chosen quadrature nodes and weights, can be assessed by the \emph{worst-case error}
\[
	e_{N,s}(Q_N)
	:=
	\sup_{\substack{f \in \calF \\ \|f\|_{\calF} \le 1}} \left| I(f) - Q_N(f, \{(w_k,\bsx_k)\}_{k=0}^{N-1}) \right|.
\]
We are interested in the behavior of the worst-case error in terms of $N$ and~$s$.
When the dimensionality $s$ is high it is convenient to consider equal-weight rules with $w_k \equiv 1/N$. We call such rules \emph{quasi-Monte Carlo rules}.
In general, it is highly non-trivial to choose the set of quadrature nodes such that the resulting rule has a low worst-case error, 
and it is usually necessary to tailor the choice of the quadrature nodes to the function space $\calF$ under consideration.
In this paper, we will consider Banach spaces which are based on assuming sufficient decay of the Fourier coefficients of its elements to guarantee certain smoothness properties. 
These spaces will be denoted by $E_{s,\bsgamma}^{\alpha}$, where $s$ denotes the number of variables the functions depend on, $\alpha>1$ 
is a real number frequently referred to as the smoothness parameter, and $\bsgamma = \{\gamma_\setu\}_{\setu \subset \bbN}$ is a sequence of strictly positive weights to 
model the importance of different subsets of dimensions. Intuitively, a large $\gamma_{\setu}$ corresponds to a high influence of the variables $x_j$ with $j\in\setu$, 
while a small $\gamma_{\setu}$ means low influence. This will be made more precise by incorporating the weights in the norm of the space $E_{s,\bsgamma}^{\alpha}$ in Section~\ref{sec:space}. The idea of these weights goes back to Sloan and Wo\'{z}niakowski \cite{SW98}, see also \cite{SW01,Hic98}.
We are interested in conditions on these weights such that we can bound the worst-case error independently of~$s$ with the nearly optimal rate in~$N$.
This is called \emph{strong tractability}, see, e.g., \cite{NW08}, for a general reference.

It is known from the classical literature on quasi-Monte Carlo methods (see standard textbooks such as \cite{HW81, N92b, SJ94}, 
and for more recent overviews \cite{DKS13} and also \cite{N14}) that an excellent choice of quadrature rules for approximating integrals of functions in 
$E_{s,\bsgamma}^{\alpha}$ are lattice rules, which were introduced by Korobov \cite{Kor59} and Hlawka \cite{H62}.
We consider \emph{rank-$1$ lattice rules}
\[
	Q_N(f,\bsz)
	:=
	\frac1N \sum_{k=0}^{N-1} f\left(\left\{\frac{k \bsz}{N}\right\}\right)
	,
\]
which are equal-weight quadrature/cubature rules with the quadrature nodes given by
\[
  \bsx_k
  :=
  \left( \left\{ \frac{k z_1}{N}\right\} , \ldots , \left\{ \frac{k z_s}{N}\right\} \right)
  \in
  [0, 1)^s
  ,
  \qquad
  \text{for } k = 0, 1, \ldots, N-1,
\]
and where $\{x\}=x-\lfloor x \rfloor$ denotes the fractional part of $x$.
Note that, given $N$ and $s$, the lattice rule is completely determined by the choice of the \emph{generating vector} $\bsz=(z_1,\ldots,z_s) \in \bbZ_N^s$, where $\bbZ_N := \{0, \ldots, N-1\}$ is the least residue system modulo~$N$. We remark that it is sufficient to consider the choice of $z_j$ to be modulo~$N$ since $\{k z_j/N\} = (k z_j \bmod{N})/N$ for integer $k$, $N$ and~$z_j$. However, it should be obvious that not every choice of a generating vector $\bsz$ also yields a lattice rule with good quality for approximating the integral. For dimensions $s\le 2$, explicit constructions of good generating vectors are available, see, e.g., \cite{SJ94,N92b}, but there are no explicit constructions of good generating vectors known for $s > 2$.

A complete search for a good generating vector $\bsz \in \bbZ_N^s$ would be infeasible even for moderate values of $N$ or $s$ due to the size of $\calO(N^s)$ of the search space. Therefore, Korobov \cite{Kor63}, and later Sloan and his collaborators \cite{SR02,SKJ02}, introduced a \emph{component-by-component (CBC) construction}, which is a greedy algorithm constructing the vector $\bsz \in \bbZ_N^s$ by successively increasing the dimension, choosing one $z_j$ at a time, 
and keeping previous components fixed, thus reducing the size of the search space to be $\calO(s N)$. It was shown in \cite{K03} for prime~$N$ and in \cite{D04} for non-prime~$N$ that the CBC construction yields generating vectors with essentially optimal convergence rates for a Hilbert space variant of the function space $E_{s,\bsgamma}^{\alpha}$ which we consider here (see also Remark~\ref{rem:other-spaces} below). Since, by judicious choice, the worst-case error expressions in both spaces 
take a similar form (squared in case of the Hilbert space setting), see Remark~\ref{rem:other-spaces}, the same applies here.
Furthermore, a fast component-by-component construction was introduced in \cite{NC06b,NC06}, for spaces with product weights $\gamma_\setu = \prod_{j\in\setu} \gamma_j$, 
reducing the computational cost of these algorithms to be only $\mathcal{O}(s N \ln N)$. See also \cite{N14} for other choices of weights, in both Hilbert and non-Hilbert space settings. For further refinements of the CBC construction, see also \cite{ELN18} and \cite{DKLP15, EKN20}.

In this paper, we study two construction algorithms for generating vectors of good lattice rules that have not been studied in a modern QMC setting before. Both are inspired by articles of Korobov, see \cite{Kor63} and \cite{Kor82} (English translation in \cite{Kor82Eng}). On the one hand, we will consider an algorithm that constructs the generating vector $\bsz$ in a component-by-component (CBC) fashion in which each component $z_j$ is assembled digit-by-digit (DBD), that is, 
for a number $N=2^n$ of points  we greedily construct the components $z_j$ bit-by-bit starting from the least significant bit. We call this the component-by-component digit-by-digit (CBC-DBD) algorithm.
We remark that also the paper \cite{NP09} studies a digit-by-digit construction of lattice rule generating vectors but in the slightly different context of lattice rules that are extensible in the number of points~$N$. The algorithm in \cite{NP09} is not a component-by-component algorithm.
Furthermore, the lattice rules in \cite{NP09} are, due to technical reasons, shown to yield an error convergence rate that is not close to optimal, whereas we will show here that the rules constructed by our CBC-DBD algorithm yield a convergence rate that is arbitrarily close to the optimal rate.
Moreover, the present paper also studies a variant of a CBC algorithm that, to our best knowledge, 
has also not been considered for weighted spaces so far. 
We stress that the error analysis for both algorithms discussed here is such that no prior knowledge of the smoothness parameter $\alpha$ is required to construct the generating vector.
The resulting generating vector will still deliver the near optimal rate of convergence, for arbitrary smoothness parameters $\alpha> 1$, and this result can be stated independently of the dimension, 
assuming that the weights satisfy certain conditions that depend on the smoothness. The standard CBC algorithms construct the generating vector specifically with the smoothness $\alpha$ as an input parameter. We see the independence of $\alpha$ in the considered construction algorithms as a big advantage.

\medskip

The rest of the paper is structured as follows. In Section~\ref{sec:space}, we give the precise definition of the function space under consideration, and outline how to analyze the error of lattice rules when integrating elements of the space. Section~\ref{sec:constr} is concerned with the two algorithms mentioned above. After showing some technical lemmas, we discuss the CBC-DBD construction in Section~\ref{sec:dbd} and provide an error analysis for the resulting quadrature rules. We then move on to the other new variant of the CBC construction in Section~\ref{sec:cbc} and prove similar error bounds. In Section~\ref{sec:fast_impl}, we give details on fast implementations of both new construction algorithms, and we present numerical experiments in Section~\ref{sec:num}. 

\medskip

We write $\bbN := \{1, 2, \ldots\}$ for the set of natural numbers and $\bbN_0 := \{0,1,2,\ldots\}$, $\bbZ$ for the set of integers and $\bbZ_N := \{0, \ldots, N-1\}$ for the least residues modulo~$N$. Additionally we define the set of non-zero integers by $\Z_\ast := \Z \setminus \{0\}$. To denote subsets of dimensions we use fraktur font, e.g., $\setu \subset \bbN$.
As a shorthand we write $\{1 \mcol s\} := \{1,\ldots,s\}$. To denote the projection of a vector $\bsx \in [0,1)^s$ or $\bsm \in \bbZ^s$ to the components in a subset of dimensions 
$\setu \subseteq \{1 \mcol s\}$ we write $\bsx_\setu := (x_j)_{j\in\setu}$ or $\bsm_\setu := (m_j)_{j\in\setu}$, respectively. The weights of the weighted function spaces will be denoted by $\gamma_\setu > 0$. To study the effect of increasing $s$ we consider the sequence of weights $\bsgamma = \{\gamma_\setu\}_{\setu\subset\bbN}$.
We set $\gamma_\emptyset = 1$ for proper normalization.

\section{Function space setting and rank-$1$ lattice rules}\label{sec:space}

We consider integrands which have an absolutely converging Fourier series,
\begin{align*}
	f(\bsx)
	=
	\sum_{\bsm \in \Z^s} \hat{f}(\bsm) \, \rme^{2 \pi \icomp \bsm \cdot \bsx}
	\qquad \text{with} \qquad
	\hat{f}(\bsm)
	:=
	\int_{[0,1]^s} f(\bsx) \, \rme^{-2 \pi \icomp \bsm \cdot \bsx} \rd \bsx ,
\end{align*}   
where $\hat{f}(\bsm)$ is the $\bsm$-th Fourier coefficient of $f$ and $\bsm \cdot \bsx := \sum_{j=1}^s m_j x_j$ is the vector dot product.
Since the Fourier series are absolutely summable, the functions are $1$-periodic and continuous and we have pointwise equality between $f$ and its series expansion.
Furthermore, this allows us to write the error of approximating the integral by a lattice rule in terms of the Fourier coefficients on the ``dual lattice''. By interchanging the order of 
summation and using the character property of lattice points, we obtain:
\begin{align}
  \notag
  Q_N(f, \bsz)
  -
  I(f)
  &=
  \sum_{\bszero \ne \bsm \in \bbZ^s} \hat{f}(\bsm) \left[ \frac1N \sum_{k=0}^{N-1} \rme^{2\pi\icomp (\bsm\cdot\bsz) \, k / N} \right]
  \\
  \label{eq:error}
  &=
  \sum_{\substack{\bszero \ne \bsm \in \bbZ^s \\ \bsm\cdot\bsz \equiv 0 \tpmod{N}}} \hat{f}(\bsm)
  =
  \sum_{\bszero \ne \bsm \in \bbZ^s} \hat{f}(\bsm) \, \delta_N(\bsm \cdot \bsz)
  ,
\end{align}
where $\delta_N$ is defined below. The error is the sum of the Fourier coefficients $\hat{f}(\bsm)$ for all $\bsm \ne \bszero$ in the \emph{dual lattice} which is defined by $\Lambda^\perp(\bsz, N) := \{ \bsm \in \Z^s \mid \bsm \cdot \bsz \equiv 0 \pmod{N} \}$. For later convenience we define the indicator function, for $a \in \bbZ$,
\begin{align*}
  \delta_N(a)
  &:=
  \frac1N \sum_{k=0}^{N-1} \rme^{2\pi\icomp \, a \, k / N}
  =
  \begin{cases}
    1, & \text{if } a \equiv 0 \pmod{N}, \\
    0, & \text{if } a \not\equiv 0 \pmod{N} ,
  \end{cases}
\end{align*}
such that $\delta_N(\bsm \cdot \bsz)$ is the indicator function of the dual lattice $\Lambda^\perp(\bsz, N)$ with respect to the argument $\bsm \in \bbZ^s$.

\subsection{Definition of the function space}

Based on the decay of the Fourier coefficients $\hat{f}(\bsm)$ we will define a function space for our integrands. For given smoothness parameter $\alpha > 1$ and strictly positive weights $\{\gamma_\setu\}_{\setu \subseteq \{1 \mcol s\}}$, we define, for any $\bsm \in \Z^s$,
\begin{equation}\label{eq:decay_func}
	r_{\alpha,\bsgamma}(\bsm)
	:=
	\gamma_{\supp(\bsm)}^{-1} \prod_{j\in\supp(\bsm)} \abs{m_j}^\alpha ,
\end{equation}
where $\supp(\bsm) := \{ j \in \{1 \mcol s\}: m_j \ne 0 \}$ is the support of $\bsm$. We note that we set $\gamma_\emptyset = 1$ such that $r_{\alpha,\bsgamma}(\bszero) = 1$. Using this decay function, we can apply H\"older's inequality to the expression~\eqref{eq:error}. Specifically, by multiplying and dividing each summand by $r_{\alpha,\bsgamma}(\bsm)$ in~\eqref{eq:error}, and then applying H\"older's inequality with $p=\infty$ and $q=1$ we obtain
\begin{align}\label{eq:Holder-infty}
  |Q_N(f, \bsz) - I(f)|
  &\le
  \Bigg( \sup_{\bsm \in \Z^s} |\hat{f}(\bsm)| \, r_{\alpha,\bsgamma}(\bsm) \Bigg)
  \Bigg( \sum_{\bszero \ne \bsm \in \Z^s} \frac{\delta_N(\bsm\cdot\bsz)}{r_{\alpha,\bsgamma}(\bsm)} \Bigg)
  .
\end{align}
We define the first factor in this error bound as the norm of our Banach space $E_{s,\bsgamma}^{\alpha}$,
\begin{align*}
	\|f\|_{E_{s,\bsgamma}^{\alpha}} 
	:= 
	\sup_{\bsm \in \Z^s} |\hat{f}(\bsm)| \, r_{\alpha,\bsgamma}(\bsm),
\end{align*}
and define, for $\alpha > 1$, our weighted function space by
\begin{equation}\label{eq:f_space}
	E_{s,\bsgamma}^{\alpha}
	:=
	\left\{f \in L^2([0,1]^s) \mid \|f\|_{E_{s,\bsgamma}^{\alpha}}  < \infty \right\}
	.
\end{equation}

\begin{remark}\label{rem:props-of-E}
Note that since $\alpha>1$, the membership of $f$ to the space $E_{s,\bsgamma}^{\alpha}$ implies the absolute convergence of its Fourier series, which in turn entails that $f$ is continuous and $1$-periodic with respect to each variable. In addition, if $f \in E_{s,\bsgamma}^{\alpha}$, $f$ has $1$-periodic continuous mixed partial derivatives $f^{(\bstau)}$ for any $\bstau \in \N_0^s$ with all $\tau_j < \alpha - 1$. This can be seen by differentiating the Fourier series of $f$ and checking the absolute summability using the property from~\eqref{eq:f_space} that $|\hat{f}(\bsm)| \le \|f\|_{E_{s,\bsgamma}^{\alpha}} / r_{\alpha,\bsgamma}(\bsm)$ for all $\bsm \in \bbZ^s$. 
These results can also be found in \cite{Kor63}, see also \cite[Section 5.1]{N92b} for a related result.
\end{remark}

\begin{remark}\label{rem:Korobov}
In his works, see, e.g., \cite{Kor59,Kor63,S71}, Korobov mostly considers functions originating from the class
\begin{equation*}
	E_{s}^{\alpha}(C)
	:=
	\left\{ f \in L^2([0,1]^s) \mid \forall\, \bsm \in \Z^s: |\hat{f}(\bsm)| \le C \, r_{\alpha}^{-1}(\bsm) \right\}
	,
\end{equation*}
for a fixed positive constant $C$ and for $\alpha > 1$. This is a subset of the unweighted version of the space 
$E_{s,\bsgamma}^{\alpha}$, i.e., the space $E_{s,\bsgamma}^{\alpha}$ with all $\gamma_{\setu}=1$, 
and where $r_{\alpha}$ is the function in \eqref{eq:decay_func} with all weights $\gamma_{\setu}=1$.
This space only contains $f$ for which the unweighted norm, i.e., with all $\gamma_\setu = 1$, satisfies $\|f\|_{E_{s,\bsgamma}^{\alpha}} \le C$.
This means that Korobov's results can be readily interpreted for the unweighted version of $E_{s,\bsgamma}^{\alpha}$ by replacing the constant $C$
in his bounds by the supremum norm $\|f\|_{E_{s,\bsgamma}^{\alpha}}$.
The introduction of weights, as they appear in the modern quasi-Monte Carlo theory, in the definition of the function space $E_{s,\bsgamma}^{\alpha}$ 
makes it possible to extend Korobov's results and to make them applicable to integration problems with large dimension $s$.
\end{remark}

\begin{remark}
 Note that, for technical reasons, we restrict ourselves to the cases where all weights $\gamma_{\setu}$, $\setu\subseteq \{1 \mcol s\}$, 
 are strictly positive in our considerations. We assume that analogous results would hold for the situation where weights 
 are allowed to be zero. We refrain from considering this more general situation in order to make the arguments in the paper not too technical. 
\end{remark}

\subsection{The worst-case error of rank-$1$ lattice rules}

We can now state the formula for the worst-case error $e_{N,s,\alpha,\bsgamma}$ for $N$-point rank-$1$ lattice rules in the space $E_{s,\bsgamma}^{\alpha}$. 
This result and similar results are well known, see, e.g., \cite{N92b,SJ94,DKS13,N14}.

\begin{theorem}[Rank-$1$ lattice rule worst-case error] \label{thm:wce}
	Let $N,s \in \N$, $\alpha > 1$ and a sequence of positive weights $\bsgamma = \{\gamma_{\setu}\}_{\setu \subseteq \{1 \mcol s\}}$ be given. Then the worst-case error $e_{N,s,\alpha,\bsgamma}(\bsz)$ for the rank-$1$ lattice rule $Q_N(\cdot,\bsz)$ in the space $E_{s,\bsgamma}^{\alpha}$ satisfies
	\begin{equation}\label{eq:wce-infty}
		e_{N,s,\alpha,\bsgamma}(\bsz)
		=
		\sum_{\substack{\bszero \ne \bsm \in \Z^s \\ \bsm \cdot \bsz \equiv 0 \tpmod{N}}} r_{\alpha,\bsgamma}^{-1}(\bsm) 
		=
		\sum_{\bszero \ne \bsm \in \Z^s} \frac{\delta_N(\bsm \cdot \bsz)}{r_{\alpha,\bsgamma}(\bsm)}
		.
	\end{equation}
\end{theorem}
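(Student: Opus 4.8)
The plan is to prove the two matching inequalities. The upper bound $e_{N,s,\alpha,\bsgamma}(\bsz) \le \sum_{\bszero \ne \bsm \in \Z^s} \delta_N(\bsm\cdot\bsz)/r_{\alpha,\bsgamma}(\bsm)$ is essentially already available: it follows by taking the supremum over all $f \in E_{s,\bsgamma}^{\alpha}$ with $\|f\|_{E_{s,\bsgamma}^{\alpha}} \le 1$ on both sides of the H\"older estimate~\RefEq{eq:Holder-infty}. Before doing so I would record that the right-hand side is finite whenever $s < \infty$ and $\alpha > 1$, because it is a subsum of $\sum_{\bsm \in \Z^s} r_{\alpha,\bsgamma}^{-1}(\bsm) = \sum_{\setu \subseteq \{1 \mcol s\}} \gamma_\setu \, (2\zeta(\alpha))^{\abs{\setu}} < \infty$; this finiteness is also what makes the extremal function below legitimate.

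For the reverse inequality I would construct an explicit ``fooling'' function $f^\ast$ saturating~\RefEq{eq:Holder-infty}. Since $\delta_N(\bsm\cdot\bsz) \in \{0,1\}$ and $r_{\alpha,\bsgamma}(\bsm) > 0$, the right choice is to declare the Fourier coefficients $\hat{f^\ast}(\bsm) := \delta_N(\bsm\cdot\bsz)\, r_{\alpha,\bsgamma}^{-1}(\bsm)$, i.e.\ $\hat{f^\ast}(\bsm) = r_{\alpha,\bsgamma}^{-1}(\bsm)$ on the dual lattice $\Lambda^\perp(\bsz,N)$ and $0$ elsewhere. By the finiteness just noted the series $\sum_{\bsm} \hat{f^\ast}(\bsm)\, \rme^{2\pi\icomp\bsm\cdot\bsx}$ converges absolutely, so $f^\ast$ is a genuine continuous $1$-periodic function belonging to $E_{s,\bsgamma}^{\alpha}$, with $\|f^\ast\|_{E_{s,\bsgamma}^{\alpha}} = \sup_{\bsm\in\Z^s}\abs{\hat{f^\ast}(\bsm)}\, r_{\alpha,\bsgamma}(\bsm) = 1$, the supremum being attained already at $\bsm = \bszero$ since $\hat{f^\ast}(\bszero) = r_{\alpha,\bsgamma}^{-1}(\bszero) = 1$. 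Because $\Lambda^\perp(\bsz,N)$ is symmetric under $\bsm \mapsto -\bsm$ and $r_{\alpha,\bsgamma}$ is even, one also gets $\hat{f^\ast}(-\bsm) = \overline{\hat{f^\ast}(\bsm)}$, so $f^\ast$ is in fact real-valued; this is not needed but is reassuring.

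It then remains to evaluate the integration error of $f^\ast$. Running the same interchange of summation that produced~\RefEq{eq:error}, and using that $\hat{f^\ast}$ is supported on the dual lattice, gives $Q_N(f^\ast,\bsz) = \sum_{\bsm\in\Z^s} \hat{f^\ast}(\bsm)\,\delta_N(\bsm\cdot\bsz) = \sum_{\bsm\cdot\bsz\equiv 0\tpmod{N}} \hat{f^\ast}(\bsm)$, while $I(f^\ast) = \hat{f^\ast}(\bszero)$, whence
\[
  Q_N(f^\ast,\bsz) - I(f^\ast)
  =
  \sum_{\substack{\bszero\ne\bsm\in\Z^s \\ \bsm\cdot\bsz\equiv 0\tpmod{N}}} r_{\alpha,\bsgamma}^{-1}(\bsm)
  =
  \sum_{\bszero\ne\bsm\in\Z^s} \frac{\delta_N(\bsm\cdot\bsz)}{r_{\alpha,\bsgamma}(\bsm)}
  .
\]
This quantity is nonnegative, so it equals $\abs{I(f^\ast) - Q_N(f^\ast,\bsz)}$, and since $\|f^\ast\|_{E_{s,\bsgamma}^{\alpha}} = 1$ this forces $e_{N,s,\alpha,\bsgamma}(\bsz) \ge \sum_{\bszero\ne\bsm\in\Z^s} \delta_N(\bsm\cdot\bsz)/r_{\alpha,\bsgamma}(\bsm)$. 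Combined with the upper bound this gives the claimed identity, and the second equality in~\RefEq{eq:wce-infty} is just the definition of $\delta_N$. I do not anticipate a real obstacle here: this is the standard duality computation for worst-case errors of lattice rules, and the only points needing mild care are confirming that $f^\ast$ genuinely lies in $E_{s,\bsgamma}^{\alpha}$ (which is where $\alpha > 1$ and finite $s$ enter) and observing that the supremum defining the worst-case error is actually attained by $f^\ast$, so that the lower bound is exact rather than merely asymptotic.
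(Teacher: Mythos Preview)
Your proof is correct and follows essentially the same approach as the paper: the upper bound comes directly from the H\"older estimate~\RefEq{eq:Holder-infty}, and the lower bound is obtained by exhibiting an explicit extremal function with norm~$1$ whose lattice-rule error equals the right-hand side of~\RefEq{eq:wce-infty}. The only cosmetic difference is that the paper takes the extremal function $g$ with $\hat{g}(\bsm) = r_{\alpha,\bsgamma}^{-1}(\bsm)$ for \emph{all} $\bsm \in \Z^s$, whereas your $f^\ast$ restricts these coefficients to the dual lattice; since the error expression~\RefEq{eq:error} only sees the dual-lattice coefficients anyway, both choices yield the same error value and norm, and the arguments are otherwise identical.
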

\begin{proof}
	The proof of the statement is given in the appendix.
\end{proof}

The worst-case error~\eqref{eq:wce-infty} is referred to as $P_\alpha$ in \cite{N92b,SJ94} and other sources.

\begin{remark}\label{rem:other-spaces}
Although, as we noted in Remark~\ref{rem:Korobov}, Korobov used the supremum norm, as we do here, in the recent literature on lattice rules, see, e.g., \cite[Appendix A.1]{NW08}, the name ``Korobov space'' is most often used in the Hilbert space setting. That is, instead of taking $p=\infty$ and $q=1$ when applying the H\"older inequality to~\eqref{eq:error}, one uses $p=q=2$. If we then multiply and divide by $(r_{\alpha,\bsgamma}(\bsm))^{1/2}$ in~\eqref{eq:error} before applying H\"older's inequality, we arrive at the bound
\begin{align}\label{eq:Holder-2}
	|Q_N(f, \bsz) - I(f)|
	&\le
	\Bigg( \sum_{\bsm \in \Z^s} |\hat{f}(\bsm)|^2 \, r_{\alpha,\bsgamma}(\bsm) \Bigg)^{1/2}
	\Bigg( \sum_{\bszero \ne \bsm \in \Z^s} \frac{\delta_N(\bsm\cdot\bsz)}{r_{\alpha,\bsgamma}(\bsm)} \Bigg)^{1/2}
	.
\end{align}
The first factor on the right-hand side of \eqref{eq:Holder-2} 
can be used as the norm for this Hilbert space setting, and similar to Theorem~\ref{thm:wce} the second factor can be shown to equal the worst-case error for the lattice rule 
with generating vector $\bsz$ in this space. Also here we require $\alpha > 1$ (which is necessary to guarantee summability of the worst-case error part).
From~\eqref{eq:Holder-2} we note that the square of the worst-case error in this Hilbert space setting equals the worst-case error for our function space $E_{s,\bsgamma}^\alpha$, with the same $\bsgamma$ and $\alpha$, because of the specific choices of multiplying and dividing by $(r_{\alpha,\bsgamma}(\bsm))^{1/2}$ and $r_{\alpha,\bsgamma}(\bsm)$, respectively, when applying H\"older's inequality, but of course, the effect of $r_{\alpha,\bsgamma}(\bsm)$ in the norm is different. Similar to Remark~\ref{rem:props-of-E}, with the specific choice of multiplying and dividing by $(r_{\alpha,\bsgamma}(\bsm))^{1/2}$, functions in this Hilbert space setting with the norm defined as in~\eqref{eq:Holder-2} have $1$-periodic continuous mixed partial derivatives $f^{(\bstau)}$ for any $\bstau \in \bbN_0^s$ with all $\tau_j < \alpha/2$, and all $f^{(\bstau)}$ are $L_2$-integrable when all $\tau_j \le \alpha/2$. This can also be found in \cite{NW08}. When applying the H\"older inequality for general $p$ and $q$ one could also just multiply and divide by the $r_{\alpha,\bsgamma}(\bsm)$ factor, resulting in a $p$-th and $q$-th power of $r_{\alpha,\bsgamma}(\bsm)$ in the norm and worst-case error, respectively, see \cite{N14,Hic98}, in which case one requires $\alpha > 1/q$. 
This choice is also popular for the Hilbert space setting and requires $\alpha > 1/2$.
\end{remark}

It is well known, see, e.g., \cite{SJ94}, that the optimal convergence rate of the worst-case integration error in the space $E_{s,\bsgamma}^\alpha$ is of order $\calO(N^{-\alpha})$ 
while for the Hilbert space setting mentioned above it is $\calO(N^{-\alpha/2})$ both with $\alpha>1$. This is consistent with how the respective worst-case errors in these spaces are related. Note also that any results on the error in $E_{s,\bsgamma}^\alpha$ immediately yield results on the error in the Hilbert space setting, which in turn can be related to the worst-case error of ``tent-transformed'' lattice rules in certain Sobolev spaces of functions whose mixed partial derivatives of order~$1$ and~$2$ in each variable are square integrable 
(with $\alpha=2$ and $\alpha=4$ in our notation, respectively), see \cite{DNP14,CKNS16,GSY19} for further details.

\subsection{The existence of good lattice rules}

The previous section provided us with an expression for the worst-case error $e_{N,s,\alpha,\bsgamma}$. 
This naturally raises the question whether there exist lattice rules whose worst-case error is sufficiently small and satisfies a certain asymptotic error decay. 
Such existence results are typically proven using the worst-case error expression~\eqref{eq:wce-infty} itself, but that expression depends on 
$\alpha$, and requires $\alpha > 1$ for the infinite sums to converge. To avoid the dependence on 
$\alpha$ we take another commonly used approach by considering truncated sums such that we can work with the limiting case of $\alpha = 1$.
In Theorem~\ref{thm:ex_res} we will then bound the worst-case error~\eqref{eq:wce-infty}, for any $\alpha > 1$, in terms of bounds for this truncated quantity with $\alpha = 1$.
To this end, define the quality measure $T(N,\bsz)$ for $N \in \N$, $\bsz \in \Z^s$, by
\begin{equation*}
	T(N,\bsz) 
	:= 
	\sum_{\bszero \ne \bsm \in M_{N,s}} \frac{\delta_N(\bsm \cdot \bsz)}{r_{1,\bsgamma}(\bsm)}, 
\end{equation*}      
where
\begin{equation*}
	M_{N,s}
	:=
	\{-(N-1),\ldots,N-1\}^s.
\end{equation*}
Furthermore, we write
\[
	M_{N,s}^*
	:=
	\left(\{-(N-1),\ldots,N-1\} \setminus \{0\}\right)^s 
	.
\]

Likewise we define the auxiliary quantity
\begin{equation*}
	T_\alpha(N,\bsz) 
	:= 
	\sum_{\bszero \ne \bsm \in M_{N,s}} \frac{\delta_N(\bsm \cdot \bsz)}{r_{\alpha,\bsgamma}(\bsm)}
	.
\end{equation*}

We remark that two very similar quantities, usually referred to as $R$ and $R_\alpha$, appear in the literature, where the sums are instead truncated to
\begin{equation*}
	C_{N,s} := \left((-N/2, N/2] \cap \bbZ\right)^s
	.
\end{equation*} 
For example, such an approach was used by Niederreiter and Sloan \cite{NS90}, and later also Joe \cite{J06}, to obtain lattice point sets with low (weighted) star discrepancy. 
The same approach with relation to the worst-case error was, e.g., used by Disney and Sloan \cite{DS91}, and others, see also the book \cite{LP14}, to study
lattice rules yielding essentially optimal convergence rates. We remark that these authors study slightly different (mostly unweighted) settings for the integration problem 
than the one considered here. For further references to classical results based on $R$ and $R_\alpha$, we refer to the books \cite{N92b} and \cite{SJ94}.

For historical reasons and in order to stay close to the method of Korobov, we continue to use the quantities $T(N,\bsz)$ and $T_\alpha(N,\bsz)$ in the following.

The following theorem assures the existence of good generating vectors $\bsz=(z_1,\ldots,z_s) \in \Z^s$ for $N$ prime in terms of~$T(N,\bsz)$. 
We state the result only for prime $N$ to avoid too many technical details, but a similar bound should hold for composite $N$. Indeed, below, we will show a constructive bound of the same flavor for $N$ of the form $N=2^n$ (cf. Theorem \ref{thm:T_target_CBCDBD}).

\begin{theorem}[Existence of good rules w.r.t.\ $T$] \label{thm:existence-T}
	Let $\bsgamma = \{\gamma_{\setu}\}_{\setu \subseteq \{1 \mcol s\}}$ be a sequence of positive weights. For every prime $N$ there exists a generating vector $\bsz \in \{1,\ldots,N-1\}^s$ such that
	\begin{align*}
		T(N,\bsz) 
		= 
		\sum_{\bszero \ne \bsm \in M_{N,s}} \frac{\delta_N(\bsm \cdot \bsz)}{r_{1,\bsgamma}(\bsm)} 
		\le
		\frac{2}{N} \sum_{\emptyset\neq\setu\subseteq \{1 \mcol s\}}\gamma_\setu \, (2(1 + \ln N))^{\abs{\setu}}
		.
	\end{align*}	
\end{theorem}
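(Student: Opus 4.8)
The plan is to prove existence by a standard averaging argument over generating vectors, following Korobov's original approach. Specifically, I would average the quantity $T(N,\bsz)$ over all $\bsz \in \{1,\ldots,N-1\}^s$ and show that the average is bounded by the right-hand side; this guarantees the existence of at least one $\bsz$ achieving at most the average.

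First I would expand $T(N,\bsz)$ using the decomposition of the sum over $\bsm \in M_{N,s}\setminus\{\bszero\}$ according to the support $\setu = \supp(\bsm)$. For fixed nonempty $\setu \subseteq \{1\mcol s\}$, the contribution is $\gamma_\setu \sum_{\bsm_\setu \in (\{-(N-1),\ldots,N-1\}\setminus\{0\})^{|\setu|}} \prod_{j\in\setu}|m_j|^{-1} \,\delta_N(\bsm_\setu \cdot \bsz_\setu)$. Then I would take the average over $\bsz \in \{1,\ldots,N-1\}^s$; since the summand only depends on $\bsz_\setu$, this reduces to averaging over $\bsz_\setu \in \{1,\ldots,N-1\}^{|\setu|}$. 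Using that $N$ is prime, for a fixed nonzero $\bsm_\setu$ (all components nonzero, each of absolute value at most $N-1$, hence nonzero mod $N$) the number of $\bsz_\setu$ with $\bsm_\setu\cdot\bsz_\setu\equiv 0\pmod N$ is exactly $(N-1)^{|\setu|-1} - $ (a small correction), so the average of $\delta_N(\bsm_\setu\cdot\bsz_\setu)$ over $\bsz_\setu \in \{1,\ldots,N-1\}^{|\setu|}$ is roughly $1/(N-1) \le 2/N$ for $N\ge 2$. (One has to be slightly careful with the $z_j$ ranging over $\{1,\ldots,N-1\}$ rather than all of $\bbZ_N$, but since the first $|\setu|-1$ coordinates can be chosen freely and the last is then determined uniquely mod $N$ and lies in $\{1,\ldots,N-1\}$ when the determined residue is nonzero — which it is since $\bsm_\setu$ has nonzero entries — the count works out cleanly.)

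Next I would bound the remaining one-dimensional sums: $\sum_{m \in \{-(N-1),\ldots,N-1\}\setminus\{0\}} |m|^{-1} = 2\sum_{m=1}^{N-1} 1/m \le 2(1+\ln N)$ by comparison with the integral. Raising to the power $|\setu|$ over the product structure gives the factor $(2(1+\ln N))^{|\setu|}$. Combining, the average of $T(N,\bsz)$ is at most $\tfrac{2}{N}\sum_{\emptyset\ne\setu\subseteq\{1\mcol s\}}\gamma_\setu\,(2(1+\ln N))^{|\setu|}$, and picking the best $\bsz$ finishes the proof.

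The main obstacle, and the place requiring the most care, is the counting step modulo a prime $N$ when the components $z_j$ are restricted to $\{1,\ldots,N-1\}$ rather than all residues $\{0,\ldots,N-1\}$: one must verify that the average of $\delta_N(\bsm_\setu\cdot\bsz_\setu)$ is genuinely at most $1/(N-1)$ (hence $\le 2/N$) and not inflated by the exclusion of $0$. Using primality of $N$ together with the fact that every relevant $m_j$ is a nonzero residue, one sees that fixing any $|\setu|-1$ of the coordinates freely determines the last one uniquely modulo $N$, and that determined value is automatically nonzero, so it indeed lies in $\{1,\ldots,N-1\}$; thus exactly $(N-1)^{|\setu|-1}$ of the $(N-1)^{|\setu|}$ choices satisfy the congruence, giving average exactly $1/(N-1) \le 2/N$. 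Everything else is the routine support-decomposition and harmonic-sum estimate.
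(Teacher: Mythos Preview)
Your approach is essentially the paper's: average $T(N,\bsz)$ over $\bsz\in\{1,\ldots,N-1\}^s$, bound the average of $\delta_N$ by $1/(N-1)\le 2/N$, then decompose by support and use $\sum_{m=1}^{N-1}1/m\le 1+\ln N$. The paper does the support decomposition after the averaging rather than before, but this is cosmetic.

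One correction is needed in your counting step. You claim that after fixing $|\setu|-1$ coordinates freely in $\{1,\ldots,N-1\}$, the uniquely determined last coordinate is ``automatically nonzero''. This is true for $|\setu|\le 2$, but fails for $|\setu|\ge 3$: the partial sum $\sum_{j\ne i} m_j z_j$ can vanish modulo $N$, forcing $z_i\equiv 0$. So the count is not \emph{exactly} $(N-1)^{|\setu|-1}$. Fortunately you only need \emph{at most} $(N-1)^{|\setu|-1}$, and this follows immediately by enlarging the range of the last coordinate to all of $\bbZ_N$ (which is exactly what the paper does): $\sum_{\bsz_\setu\in\{1,\ldots,N-1\}^{|\setu|}}\delta_N(\bsm_\setu\cdot\bsz_\setu)\le \sum_{z_i=0}^{N-1}\sum_{\text{others}}\delta_N(\cdots)=(N-1)^{|\setu|-1}$. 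With this fix your argument goes through.
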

\begin{proof}
	The proof of the statement can be found in the appendix.
\end{proof}

In the following proposition we bound the truncation error between the worst-case error $e_{N,s,\alpha,\bsgamma}$ as in 
Theorem \ref{thm:wce} and its ``restriction'' to the set $M_{N,s}$.

\begin{proposition}[Truncation error for $T_\alpha$]\label{prop:trunc_error}
	Let $\bsgamma = \{\gamma_{\setu}\}_{\setu \subseteq \{1 \mcol s\}}$ be a sequence of positive weights and let 
	$\bsz = (z_1,\ldots,z_s) \in \Z^s$ with $\gcd(z_j,N)=1$ for all $j=1,\ldots,s$. Then, for $\alpha>1$, we have that
	\begin{align*}
	    e_{N,s,\alpha,\bsgamma}(\bsz) - T_\alpha(N, \bsz)
	    =
		\sum_{\bszero \ne \bsm \in \Z^s} \frac{\delta_N(\bsm \cdot \bsz)}{r_{\alpha,\bsgamma}(\bsm)}
		-
		\sum_{\bszero \ne \bsm \in M_{N,s}} \frac{\delta_N(\bsm \cdot \bsz)}{r_{\alpha,\bsgamma}(\bsm)} 
		\le
		\frac{1}{N^\alpha} \sum_{\emptyset\neq \setu \subseteq \{1 \mcol s\}} \gamma_\setu \, (4\zeta (\alpha))^{\abs{\setu}}
		.
	\end{align*}
\end{proposition}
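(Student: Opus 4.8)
The plan is to expand the difference on the left-hand side, recognize it as a sum over those $\bsm$ in the dual lattice $\Lambda^\perp(\bsz,N)$ that lie \emph{outside} the truncation box $M_{N,s}$, and then bound this tail. First I would write
\begin{align*}
	e_{N,s,\alpha,\bsgamma}(\bsz) - T_\alpha(N,\bsz)
	=
	\sum_{\substack{\bszero \ne \bsm \in \Z^s \setminus M_{N,s} \\ \bsm\cdot\bsz \equiv 0 \tpmod N}} \frac{1}{r_{\alpha,\bsgamma}(\bsm)}
	=
	\sum_{\emptyset \ne \setu \subseteq \{1 \mcol s\}} \gamma_\setu
	\sum_{\substack{\bsm_\setu \in (\Z_\ast)^{\abs\setu} \\ \bsm_\setu \notin \{-(N-1),\ldots,N-1\}^{\abs\setu} \\ \bsm_\setu \cdot \bsz_\setu \equiv 0 \tpmod N}}
	\prod_{j\in\setu} \abs{m_j}^{-\alpha} ,
\end{align*}
where I split according to $\setu = \supp(\bsm)$ and used that $r_{\alpha,\bsgamma}$ factorizes over the support. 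The condition $\bsm \notin M_{N,s}$ means that at least one coordinate $m_j$ with $j\in\setu$ satisfies $\abs{m_j} \ge N$.

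Next I would drop the congruence constraint $\bsm_\setu \cdot \bsz_\setu \equiv 0 \pmod N$ entirely (it only shrinks the sum), and bound the resulting unconstrained sum. Using a union bound over which coordinate $j_0 \in \setu$ is the one with $\abs{m_{j_0}} \ge N$, the inner sum is at most
\[
	\abs{\setu} \left( \sum_{\abs{m}\ge N} \abs{m}^{-\alpha} \right) \left( \sum_{m \in \Z_\ast} \abs{m}^{-\alpha} \right)^{\abs\setu - 1}
	\le
	\abs{\setu} \left( \frac{2}{(\alpha-1) N^{\alpha-1}} \right) \left( 2\zeta(\alpha) \right)^{\abs\setu - 1} ,
\]
where I used $\sum_{m\ge N} m^{-\alpha} \le \int_{N-1}^\infty t^{-\alpha}\rd t$ or the cruder $\sum_{m\ge N} m^{-\alpha} \le N^{-\alpha} + \int_N^\infty t^{-\alpha}\rd t \le N^{-(\alpha-1)}/( \alpha - 1) \cdot \text{const}$, and $\sum_{m\in\Z_\ast}\abs{m}^{-\alpha} = 2\zeta(\alpha)$. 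To get the clean stated bound I would absorb constants: since $\abs{\setu} \le 2^{\abs\setu}$ and $1/N^{\alpha-1} = N/N^{\alpha} \le N \cdot N^{-\alpha}$... wait — actually I need the factor $N^{-\alpha}$, not $N^{-(\alpha-1)}$. So here I must be more careful: I should \emph{keep} the congruence constraint rather than discard it, because it is exactly what produces the extra saving of one factor of $N$.

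The key refinement, then, is this. Fix $\setu$ and fix the coordinate $j_0\in\setu$ whose modulus is $\ge N$. For each fixed choice of the \emph{other} coordinates $(m_j)_{j\in\setu\setminus\{j_0\}}$ (with each $m_j \in \Z_\ast$, arbitrary), the congruence $m_{j_0} z_{j_0} \equiv -\sum_{j\ne j_0} m_j z_j \pmod N$ together with $\gcd(z_{j_0},N)=1$ determines $m_{j_0}$ modulo $N$; hence among integers $m_{j_0}$ with $\abs{m_{j_0}}\ge N$ the admissible values form a subset of a single residue class, and $\sum_{\substack{\abs{m}\ge N,\ m \equiv c\, (N)}} \abs{m}^{-\alpha} \le 2\sum_{k\ge 1} (kN)^{-\alpha} = 2 N^{-\alpha}\zeta(\alpha)$. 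Summing over the remaining coordinates gives at most $(2\zeta(\alpha))^{\abs\setu - 1}$, summing over the $\abs\setu$ choices of $j_0$ gives a factor $\abs\setu$, and altogether
\[
	\sum_{\substack{\bsm_\setu \notin \{-(N-1),\ldots,N-1\}^{\abs\setu} \\ \bsm_\setu\cdot\bsz_\setu\equiv 0\,(N)}} \prod_{j\in\setu}\abs{m_j}^{-\alpha}
	\le
	\abs\setu \cdot 2 N^{-\alpha}\zeta(\alpha) \cdot (2\zeta(\alpha))^{\abs\setu-1}
	\le
	\frac{1}{N^\alpha}\,(4\zeta(\alpha))^{\abs\setu} ,
\]
using $\abs\setu \le 2^{\abs\setu-1}$ to absorb $\abs\setu$ into the constant (so $\abs\setu \cdot 2 \cdot 2^{\abs\setu-1} = \abs\setu\, 2^{\abs\setu} \le 4^{\abs\setu}$ for $\abs\setu\ge 1$; one checks the base case $\abs\setu=1$ gives $2 \le 4$, and the ratio argument handles the rest). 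Multiplying by $\gamma_\setu$ and summing over $\emptyset\ne\setu\subseteq\{1\mcol s\}$ yields the claim. The main obstacle is precisely the bookkeeping in this last step: one must make sure that the congruence constraint is used (not discarded) to gain the full power $N^{-\alpha}$ rather than only $N^{-(\alpha-1)}$, and then verify that the combinatorial factor $\abs\setu$ and the numerical constants can be folded into $(4\zeta(\alpha))^{\abs\setu}$ cleanly; everything else is a routine geometric-series estimate.
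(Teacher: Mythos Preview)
Your proposal is correct and follows essentially the same route as the paper: decompose by support $\setu$, apply a union bound over the index $j_0$ with $|m_{j_0}|\ge N$, use $\gcd(z_{j_0},N)=1$ to confine $m_{j_0}$ to a single residue class and bound $\sum_{|m|\ge N,\,m\equiv c\,(N)}|m|^{-\alpha}\le 2N^{-\alpha}\zeta(\alpha)$, sum the remaining coordinates to $(2\zeta(\alpha))^{|\setu|-1}$, and absorb $|\setu|$ via $|\setu|\le 2^{|\setu|}$. The paper treats $|\setu|=1$ separately (getting the exact value $2\zeta(\alpha)\gamma_{\{j\}}/N^\alpha$) and does the block decomposition $m_i\in[tN,(t+1)N-1]$ explicitly, but these are cosmetic differences; your unified argument covers both cases at once.
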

\begin{proof}
	For a subset $\emptyset\neq\setu \subseteq \{1 \mcol s\}$ and $i \in \setu$, we write for short $\bsm_{\setu\setminus\{i\}}, \bsz_{\setu\setminus\{i\}} \in \Z^{\abs{\setu}-1}$ 
	to denote the projections on those components in $\setu\setminus\{i\}$. The difference $e_{N,s,\alpha,\bsgamma} - T_\alpha$ can be rewritten as 
	\begin{align*}
		\sum_{\emptyset\neq \setu \subseteq \{1 \mcol s\}} 
		\left(
		\sum_{\bsm_\setu \in \Z_\ast^{\abs{\setu}}} \frac{\delta_N (\bsm_\setu \cdot \bsz_\setu)}{r_{\alpha,\gamma_\setu} (\bsm_\setu)}
		- \sum_{\bsm_\setu \in M_{N,|\setu|}^{\ast}} \frac{\delta_N (\bsm_\setu \cdot \bsz_\setu)}{r_{\alpha,\gamma_\setu} (\bsm_\setu)}
		\right), 
	\end{align*}
	motivating us to define, for $\emptyset \ne \setu\subseteq\{1 \mcol s\}$,
	\begin{equation*}
		T_\setu:= \sum_{\bsm_\setu \in \Z_\ast^{\abs{\setu}}} \frac{\delta_N (\bsm_\setu \cdot \bsz_\setu)}{r_{\alpha,\gamma_\setu} (\bsm_\setu)}
		- \sum_{\bsm_\setu \in M_{N,|\setu|}^{\ast}} \frac{\delta_N (\bsm_\setu \cdot \bsz_\setu)}{r_{\alpha,\gamma_\setu} (\bsm_\setu)} .
	\end{equation*}
	In the following we distinguish two cases. \\
	
	\noindent \textbf{Case 1:} Suppose that $\abs{\setu}=1$ such that $\setu = \{j\}$ for some $j \in \{1 \mcol s\}$.
	Then we have
	\begin{align*}
		T_\setu
		=
	    T_{\{j\}}
		&=
		\sum_{m_j \in\Z_\ast} \frac{\delta_N (m_j z_j)}{r_{\alpha,\gamma_{\{j\}}}(m_j)} - 
		\sum_{m_j\in M_{N,1}^\ast}\frac{\delta_N (m_j z_j)}{r_{\alpha,\gamma_{\{j\}}}(m_j)}
		=
		\sum_{\abs{m_j} \ge N} \frac{\delta_N (m_j z_j)}{r_{\alpha,\gamma_{\{j\}}}(m_j)} \\
		&=
		\sum_{\abs{m_j} \ge N} \gamma_{\{j\}} \frac{\delta_N (m_j z_j)}{\abs{m_j}^\alpha} 
		= 
		2\gamma_{\{j\}} \sum_{t=1}^\infty \frac{1}{(tN)^\alpha}
		= 
		\frac{2\zeta(\alpha)}{N^\alpha} \gamma_{\{j\}}
		,
	\end{align*}
	which follows since $\gcd(z_j,N)=1$ and thus $m_j z_j \equiv 0 \pmod{N}$ if and only if $m_j$ is a multiple of~$N$. 
	Here $\zeta(\alpha) := \sum_{n=1}^{\infty} \frac{1}{n^\alpha}$, $\alpha > 1$, denotes the Riemann zeta function.\\
	
	\noindent \textbf{Case 2:} Suppose that $\abs{\setu}\ge 2$. In this case, we estimate
	\begin{equation*}
		T_\setu 
		\le 
		\sum_{i\in\setu} \sum_{\bsm_{\setu\setminus \{i\}} \in \Z_\ast^{\abs{\setu}-1}}
		\sum_{\abs{m_i}\ge N}\frac{\delta_N (m_i z_i + \bsm_{\setu \setminus \{i\}}\cdot \bsz_{\setu \setminus \{i\}})}{r_{\alpha,\gamma_\setu}(\bsm_\setu )}.
	\end{equation*}
	For $\bsm_{\setu\setminus \{ i\}} \in \Z_\ast^{\abs{\setu}-1}$, we write $b = \bsm_{\setu \setminus \{i\}}\cdot \bsz_{\setu \setminus \{i\}}$, and consider the expression
	\begin{align*}
		\sum_{|m_i|\ge N}\frac{\delta_N (m_i z_i + b)}{r_{\alpha,\gamma_\setu}(\bsm_\setu )}
		&=
		\gamma_\setu \sum_{|m_i|\ge N}\frac{\delta_N (m_i z_i + b)}{\prod_{j\in\setu}\abs{m_j}^\alpha}
		=
		\gamma_\setu \bigg(\prod_{\substack{j\in\setu\\ j\ne i}} \abs{m_j}^{-\alpha}\bigg)
		\sum_{|m_i| \ge N} \frac{\delta_N (m_i z_i + b)}{\abs{m_i}^\alpha} \\
		&=
		\gamma_\setu \bigg(\prod_{\substack{j\in\setu\\ j\ne i}} \abs{m_j}^{-\alpha}\bigg)
		\sum_{t=1}^\infty \sum_{m_i=tN}^{(t+1)N-1} 
		\left[ \frac{\delta_N(m_i z_i + b)}{\abs{m_i}^\alpha} + \frac{\delta_N(m_i z_i - b)}{\abs{m_i}^\alpha} \right]
		\\
		&\le
		\gamma_\setu \bigg(\prod_{\substack{j\in\setu\\ j\ne i}} \abs{m_j}^{-\alpha}\bigg)
		\sum_{t=1}^\infty \frac{1}{(tN)^\alpha}
		\sum_{m_i=tN}^{(t+1)N-1}
		\big[ \delta_N(m_i z_i + b) + \delta_N(m_i z_i - b) \big] \\
		&=
		\gamma_\setu \bigg(\prod_{\substack{j\in\setu\\ j\ne i}} \abs{m_j}^{-\alpha}\bigg)
		\frac{1}{N^\alpha} \sum_{t=1}^\infty \frac{1}{t^\alpha} 
		\sum_{m_i=tN}^{(t+1)N-1}
		\big[ \delta_N(m_i z_i + b) + \delta_N(m_i z_i - b) \big]
		\\
		&=
		\frac{2\zeta(\alpha)}{N^\alpha} \gamma_\setu \prod_{\substack{j\in\setu\\ j\ne i}} \abs{m_j}^{-\alpha}
		,
	\end{align*}
	where the last equality follows since
	\begin{align*}
		\sum_{m = tN}^{(t+1)N - 1}
		\delta_N(m z + b)
		&=
		\sum_{m = 0}^{N - 1}
		\delta_N(m z + b)
		=
		1
		,
	\end{align*}
	which holds since for any $b, z \in \Z$ with $\gcd(z, N) = 1$ the congruence $m z + b \equiv 0 \pmod{N}$ has the unique solution $m \equiv -z^{-1} b \pmod{N} \in \Z_N$, 
	see also \cite[Corollary of Proposition 1]{Kor63}. Hence, we can estimate $T_{\setu}$, for $|\setu| \ge 2$, by
	\begin{align*}
		T_\setu
		&\le
		\gamma_\setu \frac{2\zeta (\alpha)}{N^\alpha}\ \sum_{i\in \setu}\ \ \sum_{\bsm_{\setu\setminus \{i\}} 
		\in \Z_\ast^{\abs{\setu}-1}} 
		\prod_{\substack{j\in\setu\\ j\neq i}} \abs{m_j}^{-\alpha}
		=
		\gamma_\setu \frac{2\zeta (\alpha)}{N^\alpha}\ \sum_{i\in \setu} \left(2\sum_{m=1}^\infty \frac{1}{m^\alpha}\right)^{\abs{\setu}-1} \\
		&=
		\gamma_\setu \frac{1}{N^\alpha} \sum_{i\in \setu} (2\zeta(\alpha))^{\abs{\setu}}
		=
		\gamma_\setu \frac{1}{N^\alpha} (2\zeta(\alpha))^{\abs{\setu}} \abs{\setu}
		\le
		\gamma_\setu \frac{1}{N^\alpha} (4\zeta(\alpha))^{\abs{\setu}}.
	\end{align*}
	In summary, we obtain, using the results for both cases from above,
	\begin{align*}
		\sum_{\emptyset\neq \setu \subseteq \{1 \mcol s\}} T_\setu 
		\le
		\sum_{\emptyset\neq \setu \subseteq \{1 \mcol s\}}  \gamma_\setu \frac{1}{N^\alpha} (4\zeta (\alpha))^{\abs{\setu}}
		,
	\end{align*}
	which yields the claimed result.
\end{proof}

The result on the truncation error in Proposition~\ref{prop:trunc_error} will be a key ingredient for the worst-case error analysis in the subsequent sections.
Furthermore, combining Proposition~\ref{prop:trunc_error} and Theorem~\ref{thm:existence-T}, we immediately obtain the following existence result 
for prime $N$.

\begin{theorem}[Existence of good rules w.r.t.\ the worst-case error] \label{thm:ex_res}
	Let $N$ be a prime number and let $\bsgamma=\{\gamma_\setu\}_{\setu \subseteq \{1 \mcol s\}}$ be positive weights with $\gamma_\emptyset =1$. 
	Then there exists a generating vector $\bsz \in \{1,\ldots,N-1\}^s$ such that, for all $\alpha > 1$, the worst-case error $e_{N,s,\alpha,\bsgamma^\alpha}(\bsz)$ satisfies
	\begin{align*}
		e_{N,s,\alpha,\bsgamma^{\alpha}}(\bsz)
		&\le
		\frac{1}{N^{\alpha}} \left( \sum_{\emptyset \neq \setu \subseteq\{1 \mcol s\}} \gamma_\setu^\alpha \, (4\zeta (\alpha))^{\abs{\setu}} + 
		\left( \sum_{\emptyset \neq \setu \subseteq\{1 \mcol s\}} 2\gamma_\setu \, (2(1 + \ln N))^{\abs{\setu}}\right)^{\!\!\alpha} \right)
	\end{align*}  
	with weight sequence $\bsgamma^{\alpha}=\{\gamma_\setu^{\alpha}\}_{\setu \subseteq \{1 \mcol s\}}$.
\end{theorem}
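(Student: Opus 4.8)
The plan is to combine \RefThm{thm:existence-T} with \RefProp{prop:trunc_error}, exploiting that the quantity $T(N,\bsz)$ was deliberately defined with the limiting smoothness $\alpha=1$ and with the \emph{unpowered} weights $\bsgamma$. First, since $N$ is prime, \RefThm{thm:existence-T} supplies a vector $\bsz\in\{1,\ldots,N-1\}^s$ with
\[
  T(N,\bsz)\le\frac{2}{N}\sum_{\emptyset\ne\setu\subseteq\{1\mcol s\}}\gamma_\setu\,(2(1+\ln N))^{\abs{\setu}},
\]
and every component satisfies $\gcd(z_j,N)=1$, so \RefProp{prop:trunc_error} applies to this very $\bsz$ for every $\alpha>1$. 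Applying that proposition with the weight sequence $\bsgamma^\alpha=\{\gamma_\setu^\alpha\}_\setu$ in place of $\bsgamma$ yields
\[
  e_{N,s,\alpha,\bsgamma^\alpha}(\bsz)\le T_\alpha(N,\bsz)+\frac{1}{N^\alpha}\sum_{\emptyset\ne\setu\subseteq\{1\mcol s\}}\gamma_\setu^\alpha\,(4\zeta(\alpha))^{\abs{\setu}},
\]
where now $T_\alpha(N,\bsz)$ is understood with respect to the weights $\bsgamma^\alpha$. It then remains only to bound this $T_\alpha(N,\bsz)$ by $T(N,\bsz)^\alpha$.

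The crucial step is the observation that, by the product form of the decay function in \eqref{eq:decay_func}, one has $r_{\alpha,\bsgamma^\alpha}(\bsm)=\big(r_{1,\bsgamma}(\bsm)\big)^\alpha$ for every $\bsm\in\Z^s$, since raising each weight to the power $\alpha$ exactly matches the $\alpha$-th powers of the $\abs{m_j}$. As $\delta_N$ takes only the values $0$ and $1$, each summand of $T_\alpha(N,\bsz)$ (w.r.t.\ $\bsgamma^\alpha$) equals $\big(\delta_N(\bsm\cdot\bsz)/r_{1,\bsgamma}(\bsm)\big)^\alpha$, so by the elementary inequality $\sum_i a_i^\alpha\le\big(\sum_i a_i\big)^\alpha$ (valid for nonnegative reals $a_i$ and $\alpha\ge1$) over the finite index set $M_{N,s}\setminus\{\bszero\}$,
\[
  T_\alpha(N,\bsz)=\sum_{\bszero\ne\bsm\in M_{N,s}}\left(\frac{\delta_N(\bsm\cdot\bsz)}{r_{1,\bsgamma}(\bsm)}\right)^{\!\alpha}\le\left(\sum_{\bszero\ne\bsm\in M_{N,s}}\frac{\delta_N(\bsm\cdot\bsz)}{r_{1,\bsgamma}(\bsm)}\right)^{\!\alpha}=T(N,\bsz)^\alpha.
\]

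Finally, inserting the bound on $T(N,\bsz)$ from \RefThm{thm:existence-T} and absorbing the prefactor as $(2/N)^\alpha=N^{-\alpha}2^\alpha$ inside the $\alpha$-th power gives
\[
  T(N,\bsz)^\alpha\le\frac{1}{N^\alpha}\left(\sum_{\emptyset\ne\setu\subseteq\{1\mcol s\}}2\gamma_\setu\,(2(1+\ln N))^{\abs{\setu}}\right)^{\!\alpha},
\]
and combining this with the displayed consequence of \RefProp{prop:trunc_error} produces exactly the asserted bound. The only genuinely nontrivial point is the passage $T_\alpha(N,\bsz)\le T(N,\bsz)^\alpha$ via $r_{\alpha,\bsgamma^\alpha}=r_{1,\bsgamma}^\alpha$ and the $\ell^\alpha$-versus-$\ell^1$ inequality; everything else is bookkeeping. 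I expect the place most prone to slips is tracking which weight sequence ($\bsgamma$ or $\bsgamma^\alpha$) is attached to $T_\alpha$, $e_{N,s,\alpha,\cdot}$ and $r_{\alpha,\cdot}$ at each stage, so I would annotate this explicitly throughout.
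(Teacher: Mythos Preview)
Your proof is correct and follows essentially the same approach as the paper: you invoke \RefThm{thm:existence-T} to obtain $\bsz$, apply \RefProp{prop:trunc_error} with the weights $\bsgamma^\alpha$, and bridge the two via the identity $r_{\alpha,\bsgamma^\alpha}=r_{1,\bsgamma}^\alpha$ together with the inequality $\sum_i a_i^\alpha\le\big(\sum_i a_i\big)^\alpha$ for $\alpha\ge1$. If anything, you are slightly more explicit than the paper about the $r_{\alpha,\bsgamma^\alpha}=r_{1,\bsgamma}^\alpha$ identity and about which weight sequence is attached to $T_\alpha$, which is helpful.
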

\begin{proof}
    Since $N$ is prime we have in particular that $\gcd(z_j,N)=1$ for all $j=1,\ldots,s$.
	Therefore, by Proposition~\ref{prop:trunc_error} the worst-case error satisfies
	\begin{align*}
		e_{N,s,\alpha,\bsgamma^{\alpha}}(\bsz)
		&\le
		\frac{1}{N^\alpha} \sum_{\emptyset\neq \setu \subseteq \{1 \mcol s\}} \gamma_u^\alpha \, (4\zeta (\alpha))^{\abs{\setu}} 
		+ 
		\sum_{\bszero \ne \bsm \in M_{N,s}} \frac{\delta_N(\bsm \cdot \bsz)}{r_{\alpha,\bsgamma^{\alpha}}(\bsm)} 
		.
	\end{align*} 
	Then, using the fact that for $\alpha \ge 1$ we have $\sum_{i \in I} x_i^{\alpha} \le (\sum_{i \in I} x_i)^\alpha$ for $x_i \ge 0$ and
	countable index set $I$, Theorem~\ref{thm:existence-T} guarantees the existence of a $\bsz \in \{1,\ldots,N-1\}^s$ such that
	\begin{align*}
		\sum_{\bszero \ne \bsm \in M_{N,s}} \frac{\delta_N(\bsm \cdot \bsz)}{r_{\alpha,\bsgamma^{\alpha}}(\bsm)}
		\le
		\left( \sum_{\bszero \ne \bsm \in M_{N,s}} \frac{\delta_N(\bsm \cdot \bsz)}{r_{1,\bsgamma}(\bsm)} \right)^{\!\!\alpha}
		\le \frac{1}{N^{\alpha}} \left(\sum_{\emptyset \neq \setu \subseteq\{1 \mcol s\}} 2\gamma_\setu \, (2(1 + \ln N))^{\abs{\setu}}\right)^{\!\!\alpha}
		.
	\end{align*} 
	Combining both estimates yields the claim.
\end{proof}

Theorem \ref{thm:ex_res} shows the existence of rank-$1$ lattice rules which achieve the almost optimal error convergence rate of $\calO(N^{-\alpha})$ in the space $E_{s,\bsgamma}^{\alpha}$ for prime $N$, see \cite[Theorem~1]{Kor63} for a lower bound for general cubature rules with arbitrary integration weights, provided certain conditions 
on the weights $\gamma_{\setu}$ are met. In \cite{K03} and \cite{DSWW06} it is shown that if the weights sequence, which is $\bsgamma^{\alpha}$ in our case, 
satisfies $\sum_{|\setu|<\infty} \gamma_\setu^{\alpha/\alpha} = \sum_{|\setu|<\infty} \gamma_\setu < \infty$ then this rate can be achieved with the implied constant independent of the dimension. The weights in our space of smoothness $\alpha$ are expressed as $\bsgamma^\alpha$ in compliance with this summability condition.
It is not known if this is a necessary condition. If our weights in the space would be just $\bsgamma$ then this would show a much weaker condition on the summability 
than is currently known. Exactly the same approach with weights of the form $\bsgamma^\alpha$ in the space $E_{s,\bsgamma^\alpha}^\alpha$ occurs in \cite{HN03}. \\

In this context, Korobov called good rules with respect to the $T$ criterion  \emph{optimal coefficients}, see, e.g., \cite{Kor63,Kor82}. We extend this terminology to the weighted setting. (We remark that Korobov defined the concept of optimal coefficients in slightly different ways in different papers.) Similar terminology appears in the work by Hlawka \cite{H62} which coins this \emph{the method of good lattice points}.

\begin{definition}[Optimal coefficients modulo~$N$] \label{def:opt_coeff}
	For $N \in \N$, let $z_1 = z_1(N),\ldots,z_s = z_s(N) \in \Z$ be integers and let $\bsgamma=\{\gamma_\setu\}_{\setu \subseteq \{1 \mcol s\}}$ be a sequence of positive weights.
	If there exists a positive constant $C(\bsgamma,\delta)$, independent of $N$, such that for infinitely many values of $N$ we have 
	\begin{equation} \label{eq:opt_coeff}
		T(N,\bsz)\le C(\bsgamma,\delta) \, N^{-1+\delta} 
		\quad\text{for any}\quad \delta > 0 
		,
	\end{equation}
	then the numbers $z_1 = z_1(N), \ldots, z_s = z_s(N)$ (or, equivalently, the vectors $\bsz(N)= (z_1(N), \ldots,$ $z_s(N))$, 
	for those values of $N\in\N$ for which \eqref{eq:opt_coeff} holds) are called \emph{optimal coefficients modulo~$N$}.
\end{definition}

In the following section, we will introduce construction algorithms which devise generating vectors for lattice rules which are optimal coefficients modulo $N$ 
according to Definition \ref{def:opt_coeff}.

\section{Construction methods for rank-$1$ lattice rules} \label{sec:constr}

In this section, we introduce construction methods to design rank-$1$ lattice rules exhibiting the desired worst-case error behavior. At first, we establish some auxiliary statements which will be needed in the further analysis. We note that $T(N, \bsz)$ can be written as
\begin{align}\label{eq:TN-subsets}
	T(N, \bsz)
	&=
	\sum_{\bszero \ne \bsm \in M_{N,s}} \frac{\delta_N(\bsm\cdot\bsz)}{r_{1,\bsgamma}(\bsm)}
	=
	\sum_{\emptyset \ne \setu \subseteq \{1 \mcol s\}} \frac{\gamma_\setu}{N} \sum_{k=0}^{N-1}
	\left[
	\prod_{j\in\setu} \sum_{m_j \in M_{N,1}^\ast} \frac{\rme^{2\pi\icomp k m_j z_j/N }}{|m_j|}
	\right]
	.
\end{align}
The expression in square brackets can be considered as a function which has non-zero Fourier coefficients only for indices in the truncated box $M_{N,|\setu|}^\ast$, 
and likewise we can define a similar function which is not truncated. So, for $x \in (0,1)$, we define two functions,
\begin{equation}\label{eq:vartheta}
	\vartheta_N(x)
	:=
	\sum_{m \in M_{N,1}^\ast}
	\frac{\rme^{2\pi\icomp m x}}{|m|}
	\qquad\text{and}\qquad
	\vartheta(x)
	:=
	\sum_{m \in \ZZ_\ast}
	\frac{\rme^{2\pi\icomp m x}}{|m|}
	.
\end{equation}
In Lemma~\ref{lem:ln-sin} we will first show that $\vartheta(x)$ can equivalently be written as the function $-2\ln(a \sin(\pi x))$ modulo a constant depending on the parameter~$a \ge 1$.
Then in Lemma~\ref{lemma:expr_target_function} we show how $\vartheta_N(x)$ can be approximated by  $-2\ln(a \sin(\pi x))$ with $a=1$.
(Later we will use $-2\ln(a \sin(\pi x))$ as a substitute, in Theorems~\ref{thm:T_target_CBCDBD} and~\ref{thm:T_target_CBC}.)
Finally, we show in Lemma~\ref{lemma:diff_prod} how to deal with the difference of products of these functions.

\begin{lemma}\label{lem:ln-sin}
	For $x \in (0,1)$ and $a \ge 1$ we have
	\begin{equation} \label{eq:Fourier_log_sin}
		-2 \ln (a \sin (\pi x))
		=
		\ln(4) - 2\ln(a) + \sum_{m \in \Z_\ast} \frac{\rme^{2\pi \icomp m x}}{\abs{m}} 
		=
		\ln(4) - 2\ln(a) + \vartheta(x) 
		.
	\end{equation}
\end{lemma}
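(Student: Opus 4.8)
The plan is to establish the Fourier expansion of $-2\ln(\sin(\pi x))$ on $(0,1)$ and then pick up the $-2\ln(a)$ term for free. The cleanest route is via the classical product formula $\sin(\pi x) = \pi x \prod_{k=1}^{\infty}(1-x^2/k^2)$, or, better suited to exponentials, the factorization $1 - \rme^{2\pi\icomp x} = -2\icomp\, \rme^{\pi\icomp x}\sin(\pi x)$. Taking absolute values gives $|1-\rme^{2\pi\icomp x}| = 2\sin(\pi x)$ for $x\in(0,1)$, hence $\ln(2\sin(\pi x)) = \operatorname{Re}\ln(1-\rme^{2\pi\icomp x})$ with a suitable branch. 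Then I would use the principal-branch power series $\ln(1-w) = -\sum_{m\ge 1} w^m/m$, valid for $|w|\le 1$, $w\ne 1$, with $w = \rme^{2\pi\icomp x}$ (which lies on the unit circle, $w\ne 1$ since $x\in(0,1)$; convergence on the boundary follows from the Dirichlet/Abel test). This yields
\begin{equation*}
	\ln(1-\rme^{2\pi\icomp x}) = -\sum_{m=1}^{\infty} \frac{\rme^{2\pi\icomp m x}}{m},
\end{equation*}
and taking real parts gives $\ln(2\sin(\pi x)) = -\sum_{m=1}^{\infty}\frac{\cos(2\pi m x)}{m} = -\frac12\sum_{m\in\Z_\ast}\frac{\rme^{2\pi\icomp m x}}{|m|}$, since the imaginary (sine) parts cancel under the symmetrization $m\leftrightarrow -m$ while the cosine parts double.

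From here the rest is bookkeeping. Write $-2\ln(a\sin(\pi x)) = -2\ln a - 2\ln(\sin(\pi x)) = -2\ln a - 2\ln\!\big(\tfrac12 \cdot 2\sin(\pi x)\big) = -2\ln a + 2\ln 2 - 2\ln(2\sin(\pi x))$. Substituting the expansion of $\ln(2\sin(\pi x))$ derived above, and noting $2\ln 2 = \ln 4$, gives exactly
\begin{equation*}
	-2\ln(a\sin(\pi x)) = \ln 4 - 2\ln a + \sum_{m\in\Z_\ast}\frac{\rme^{2\pi\icomp m x}}{|m|} = \ln 4 - 2\ln a + \vartheta(x),
\end{equation*}
which is the claim; the condition $a\ge 1$ is not actually needed for the identity itself (it only ensures $a\sin(\pi x)$ stays controlled later), so I would either drop it or simply carry it along harmlessly.

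The main obstacle — really the only subtle point — is the convergence of $\sum_{m\ge 1}\rme^{2\pi\icomp m x}/m$ on the unit circle and the legitimacy of evaluating the power series for $\ln(1-w)$ there. Inside the disk everything is routine, but $w=\rme^{2\pi\icomp x}$ sits on the boundary. I would handle this with Abel's theorem: the series $\sum_{m\ge 1} r^m\rme^{2\pi\icomp m x}/m$ converges for $r<1$ to $-\ln(1-r\rme^{2\pi\icomp x})$, the boundary series converges (by Dirichlet's test, since partial sums of $\rme^{2\pi\icomp m x}$ are bounded for $x\notin\Z$ and $1/m\downarrow 0$), so by Abel's theorem the boundary value equals the radial limit $-\ln(1-\rme^{2\pi\icomp x})$. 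Taking real parts commutes with this limit. One should also remark that $\vartheta(x)$ as defined in~\eqref{eq:vartheta} is exactly this symmetrized series and hence converges for $x\in(0,1)$ in the same conditional sense. Alternatively, if the paper prefers to avoid Abel summation entirely, one can cite the standard Fourier-series fact that $-\ln(2\sin(\pi x)) = \sum_{m\ge 1}\cos(2\pi m x)/m$ for $x\in(0,1)$ (the Fourier series of the $1$-periodic function $-\ln|2\sin(\pi x)|$, an $L^1$ but not $L^2$ function whose Fourier coefficients are computed by a standard contour or generating-function argument) — but I would include at least a one-line justification rather than leaving it as folklore.
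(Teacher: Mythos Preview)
Your proof is correct and follows essentially the same approach as the paper: both expand $\ln(1-\rme^{2\pi\icomp x})$ via its power series and then symmetrize, the paper by averaging the expressions for $\sigma=\pm1$ (which is equivalent to your taking of real parts). Your treatment of the boundary convergence via Dirichlet's test and Abel's theorem is actually more careful than the paper's, which simply asserts convergence for $x\in(0,1)$.
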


\begin{proof}
For $\sigma \in \{-1,1\}$, Euler's formula yields the identity
\begin{align*}
	\ln (a \sin (\pi x))
	&=
	\ln(a) + \ln\left(\frac{\rme^{\icomp \pi x} - \rme^{-\icomp \pi x}}{2\icomp}\right)
	=
	\ln(a) + \ln\left(\rme^{\sigma \icomp \pi x} \sigma \left(\frac{1 - \rme^{-2 \sigma \icomp \pi x}}{2\icomp}\right)\right) \\
	&=
	\ln(a) + \sigma \icomp \pi x - \ln(2\sigma\icomp) + \ln\left(1 - \rme^{-2 \sigma \icomp \pi x}\right) \\
	&=
	\ln(a) + \sigma \icomp \pi x - \ln(2) - \sigma \frac{\icomp \pi}{2} + \ln\left(1 - \rme^{-2 \sigma \icomp \pi x}\right) .
\end{align*}	
We recall that the Maclaurin series of $\ln(1-y)$ equals $-\sum_{m=1}^\infty \frac{y^m}{m}$ which converges to $\ln(1-y)$
for $y=\rme^{2\pi \icomp x}$ provided that $x \not\in \Z$. Then, averaging over both choices of $\sigma \in \{-1,1\}$ yields  
\begin{align*}
	\ln (a \sin (\pi x))
	&=
	\ln(a) - \ln(2) - \frac12 \sum_{m=1}^\infty \left( \frac{\rme^{-2 \icomp \pi m x} + \rme^{2 \icomp \pi m x}}{m} \right)
\end{align*}
and we note that the series is convergent for $x \in (0,1)$.
\end{proof}

Figure~\ref{fig:qual-func} depicts the function $-2 \ln (a \sin (\pi x))$ for $a=1$ and $a=2$ and illustrates its divergence 
towards infinity on the boundaries of the interval $[0,1]$. We therefore bear in mind that it cannot be evaluated in $x=0$ and $x=1$.

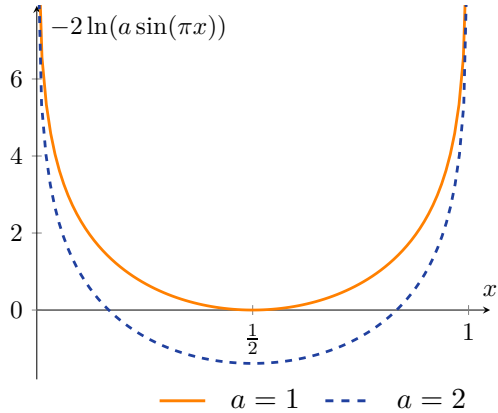
\begin{figure}[H]
	\centering
	\begin{tikzpicture}[scale=.87]
	\begin{axis}
		[
		xlabel=$x$,
		ylabel=$-2 \ln (a \sin(\pi x))$,
		ymin=-1.8,
		ymax=7.9,
		axis lines=middle,
		xmin=0,
		xmax=1.05,
		xtick=\empty,
		extra x ticks={0.5,1},
		extra x tick labels={$\frac12$,1},
		extra y ticks={0},
		extra y tick labels={0},
		tick label style={font=\normalsize},
		x label style={at={(axis description cs:1,0.2)},anchor=south},
		y label style={at={(axis description cs:0.01,1)},anchor=north west},
		]
		\addplot[orange,very thick,domain=0.002:0.998,samples=100,draw=orange] {-2*ln(sin(deg(pi*\x)))};
		\addplot[blue,dashed,very thick,domain=0.001:0.999,samples=100,draw={rgb:red,1;green,2;blue,5}] {-2*ln(2*sin(deg(pi*\x)))};
	\end{axis}
	\end{tikzpicture}
	\vskip\baselineskip
	\vspace{-15pt}
	\begin{tikzpicture}
		\hspace{0.05\linewidth}
		\begin{customlegend}[
			legend columns=2,legend style={align=left,draw=none,column sep=1.5ex},
			legend entries={$a=1$, $a=2$}
			]
			\addlegendimage{color=orange, solid, line width=1.1pt}
			\addlegendimage{color={rgb:red,1;green,2;blue,5}, dashed, line width=1.1pt}
		\end{customlegend}
	\end{tikzpicture}
	\caption{Behavior of the function $-2 \ln (a \sin(\pi x))$ for $a=1$ and $a=2$ on the interval $[0,1]$.}
	\label{fig:qual-func}
\end{figure}

We can now use this function to approximate the truncated series.
\begin{lemma}\label{lemma:expr_target_function}
	Let $N \in \N$, then for any $x \in (0,1)$ there exists a $\tau(x) \in \R$ with $|\tau(x)| \le 1$ such that 
	\begin{equation*}
		\ln (\sin^{-2}(\pi x) )
		=
		\ln 4 + \sum_{m \in M_{N,1}^\ast} \frac{\rme^{2\pi \icomp m x}}{|m|} + \frac{\tau(x)}{N \|x\|}
		=
		\sum_{m = -(N-1)}^{N-1} \frac{\rme^{2\pi \icomp m x}}{b(m)} + \frac{\tau(x)}{N \|x\|},
	\end{equation*}
	with coefficients
	\begin{align} \label{eq:def-b-dnint}
		b(m)
		&:= 
		\left\{\begin{array}{cc}
		\abs{m}, & {\text{for }} m \ne 0 , \\ 
		1/(\ln 4), & {\text{for }} m = 0 ,
		\end{array}\right.
	\end{align}
   where $\|x\|$ denotes the distance to the nearest integer of $x$, i.e.,
   \[
		\|x\|
		:=
		\min\{\{x\},1-\{x\}\}.
   \]
\end{lemma}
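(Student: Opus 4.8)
The plan is to deduce the statement from \RefLem{lem:ln-sin} together with a tail estimate for a conditionally convergent Fourier series. By \RefLem{lem:ln-sin} with $a=1$ we have, for $x\in(0,1)$, that $\ln(\sin^{-2}(\pi x)) = -2\ln(\sin(\pi x)) = \ln 4 + \vartheta(x)$, so that
\[
	\ln(\sin^{-2}(\pi x)) - \ln 4 - \vartheta_N(x) = \vartheta(x) - \vartheta_N(x) .
\]
It therefore suffices to prove the quantitative bound $|\vartheta(x) - \vartheta_N(x)| \le 1/(N\|x\|)$: then $\tau(x) := N\|x\|\,(\vartheta(x)-\vartheta_N(x))$ satisfies $|\tau(x)|\le 1$ and yields the first displayed equality of the lemma, while the second one follows at once from the definition of $b(m)$ in \RefEq{eq:def-b-dnint}, since the term $m=0$ there contributes $1/(1/\ln 4)=\ln 4$ and the remaining terms reproduce $\vartheta_N(x)$.

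To prove the bound, I would first use the definitions in \RefEq{eq:vartheta} and pair the indices $m$ and $-m$ to write
\[
	\vartheta(x) - \vartheta_N(x) = \sum_{|m|\ge N} \frac{\rme^{2\pi\icomp m x}}{|m|} = 2\sum_{m=N}^\infty \frac{\cos(2\pi m x)}{m},
\]
the series converging conditionally for $x\in(0,1)$. Setting $S_k := \sum_{m=N}^k \rme^{2\pi\icomp m x}$ and summing the geometric progression gives the uniform bound $|S_k| \le 2/|1-\rme^{2\pi\icomp x}| = 1/|\sin(\pi x)|$, and hence also $\bigl|\sum_{m=N}^k \cos(2\pi m x)\bigr| \le 1/|\sin(\pi x)|$. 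Abel summation applied to $\sum_{m\ge N} m^{-1}\cos(2\pi m x)$, with the boundary term vanishing in the limit since the partial sums stay bounded, then gives
\[
	\Bigl|\sum_{m=N}^\infty \frac{\cos(2\pi m x)}{m}\Bigr| \le \frac{1}{|\sin(\pi x)|}\sum_{m=N}^\infty\Bigl(\frac1m-\frac1{m+1}\Bigr) = \frac{1}{N|\sin(\pi x)|},
\]
so that $|\vartheta(x)-\vartheta_N(x)| \le 2/(N|\sin(\pi x)|)$. Finally, since $t\mapsto\sin(\pi t)$ is concave on $[0,1]$, it lies above its chords and hence $\sin(\pi x)\ge 2\|x\|$ for $x\in(0,1)$; this upgrades the previous estimate to $|\vartheta(x)-\vartheta_N(x)| \le 1/(N\|x\|)$, as required.

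The substantive point is the summation-by-parts step: it must be carried out so as to keep the constant sharp (exactly $1$), which forces one to combine the optimal partial-sum bound $1/|\sin(\pi x)|$ with the optimal sine inequality $\sin(\pi x)\ge 2\|x\|$, and to be slightly careful about the conditional convergence of the tail (the boundary term $S_M/(M+1)$ must be shown to tend to $0$). Everything else is bookkeeping, and the choice $b(0)=1/\ln 4$ is precisely what absorbs the additive constant $\ln 4$ coming from \RefLem{lem:ln-sin}.
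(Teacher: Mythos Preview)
Your proof is correct and follows essentially the same route as the paper's: both start from \RefLem{lem:ln-sin}, isolate the tail $\sum_{|m|\ge N}\rme^{2\pi\icomp mx}/|m|$, bound it by $2/(N\sin(\pi x))$, and finish with $\sin(\pi x)\ge 2\|x\|$. The only cosmetic difference is that the paper bounds the one-sided complex tail $\sum_{m\ge N}\rme^{2\pi\icomp mx}/m$ via an explicit telescoping identity (which is Abel summation written out by hand), whereas you pair $m$ with $-m$ to pass to cosines and then invoke the textbook Abel/Dirichlet bound; the resulting constants coincide.
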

\begin{proof}
	The proof of the statement is given in the appendix, see also \cite{Kor63}.
\end{proof}

The following lemma provides a result for the difference of two products. Different variants of such a result can be found in the literature, but, as this lemma 
is crucial in showing our main result, we provide a proof for the sake of completeness.

\begin{lemma} \label{lemma:diff_prod}
	For $1\le j \le s$, let $u_j, v_j$ and $r_j$ be real numbers which satisfy:
	\begin{align*}
		(a) \quad u_j = v_j + r_j, \quad
		(b) \quad |u_j| \le \bar{u}_j, \quad
		(c) \quad \bar{u}_j \geq 1 .
	\end{align*}
	Then, for any subset $\emptyset \ne \setu \subseteq \{1,\ldots,s\}$ there exists a $\theta_{\setu}$ with $|\theta_{\setu}| \le 1$ such that the following identity holds,
	\begin{align}\label{eq:diff_prod} 
		\prod_{j \in \setu} u_j
		&=
		\prod_{j \in \setu} v_j + \theta_{\setu} \left(\prod_{j \in \setu} (\bar{u}_j+|r_j|) \right) \sum_{j \in \setu} |r_j| .
	\end{align}
\end{lemma}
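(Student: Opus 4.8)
The plan is to prove the identity by induction on $|\setu|$, treating the difference of products via a telescoping argument. Without loss of generality I may relabel so that $\setu = \{1,\ldots,k\}$ with $k = |\setu|$; the claimed statement then reads $\prod_{j=1}^k u_j = \prod_{j=1}^k v_j + \theta\,\big(\prod_{j=1}^k(\bar u_j+|r_j|)\big)\sum_{j=1}^k|r_j|$ for some $|\theta|\le 1$. The base case $k=1$ is immediate: $u_1 = v_1 + r_1$, so we may take $\theta_{\{1\}} = r_1/(|r_1|)$ if $r_1\neq0$ (and $\theta = 0$ otherwise), noting $|\theta|\le1$ and $\bar u_1 + |r_1| \ge 1$ by hypothesis (c), so the remainder term equals exactly $r_1$. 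Actually it is cleaner to avoid the sign bookkeeping and instead prove the \emph{bound} $\big|\prod_{j\in\setu}u_j - \prod_{j\in\setu}v_j\big| \le \big(\prod_{j\in\setu}(\bar u_j+|r_j|)\big)\sum_{j\in\setu}|r_j|$, which is equivalent to the stated identity after absorbing the sign and magnitude into $\theta_\setu$.

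The key step is the telescoping decomposition
\begin{align*}
	\prod_{j=1}^k u_j - \prod_{j=1}^k v_j
	&=
	\sum_{i=1}^k \Bigg(\prod_{j=1}^{i-1} u_j\Bigg)(u_i - v_i)\Bigg(\prod_{j=i+1}^k v_j\Bigg)
	=
	\sum_{i=1}^k r_i \Bigg(\prod_{j=1}^{i-1} u_j\Bigg)\Bigg(\prod_{j=i+1}^k v_j\Bigg).
\end{align*}
Then I estimate each factor: $|u_j| \le \bar u_j \le \bar u_j + |r_j|$ by (b), and $|v_j| = |u_j - r_j| \le \bar u_j + |r_j|$ by (a) and (b). Hence each of the two partial products in the $i$-th summand is bounded in absolute value by $\prod_{j\neq i}(\bar u_j + |r_j|)$, which by (c) — since every $\bar u_j + |r_j| \ge 1$ — is itself at most $\prod_{j=1}^k(\bar u_j+|r_j|)/(\bar u_i+|r_i|) \le \prod_{j=1}^k(\bar u_j+|r_j|)$. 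Taking absolute values through the sum gives
\[
	\Bigg|\prod_{j=1}^k u_j - \prod_{j=1}^k v_j\Bigg|
	\le
	\Bigg(\prod_{j=1}^k(\bar u_j+|r_j|)\Bigg)\sum_{i=1}^k |r_i|,
\]
and setting $\theta_\setu$ to be the signed ratio of the left-hand side (with its actual sign) to the right-hand side — or $0$ if the right-hand side vanishes — yields $|\theta_\setu|\le1$ and the stated identity \eqref{eq:diff_prod}.

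The only mild subtlety, and the place to be careful, is the use of hypothesis (c): without $\bar u_j + |r_j| \ge 1$ one could not discard the $(\bar u_i+|r_i|)^{-1}$ factor when bounding the partial products by the full product $\prod_{j=1}^k(\bar u_j+|r_j|)$, and the telescoping estimate would instead naturally produce the full product over $j \in \setu$ rather than over $j \in \setu \setminus \{i\}$ in each term. So the role of (c) is precisely to let us uniformly bound every $i$-th summand's product factor by the single quantity $\prod_{j\in\setu}(\bar u_j+|r_j|)$, which is what makes the final expression clean. Everything else is a routine triangle-inequality estimate, so I do not expect any real obstacle; the induction-versus-telescoping choice is cosmetic and I would present the telescoping version as above since it is shortest.
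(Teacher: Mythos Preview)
Your proof is correct and essentially the same as the paper's. Both arrive at the identical telescoping identity $\prod_{j\in\setu}u_j-\prod_{j\in\setu}v_j=\sum_{i\in\setu}r_i\prod_{j<i}u_j\prod_{j>i}v_j$ (the paper writes $u_j=v_j+r_j$ in the last product) and then bound it using $|u_j|\le\bar u_j+|r_j|$, $|v_j|\le\bar u_j+|r_j|$, and insert the missing factor $\bar u_i+|r_i|\ge1$ via hypothesis~(c); the only cosmetic difference is that the paper derives the telescoping identity by expanding $\prod(v_j+r_j)$ over subsets and regrouping by the largest index carrying an $r$-factor, whereas you invoke the standard telescoping directly.
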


\begin{proof}
	Note that for a subset $\emptyset \ne \setu \subseteq \{1,\ldots,s\}$ the expansion of $\prod_{j\in\setu} (v_j + r_j)$ 
	equals the sum over all possible $2^{|\setu|}$ products
	which select either $v_j$ or $r_j$ as a factor for each $j \in \setu$. Therefore
	\[
	\prod_{j\in\setu} u_j
	=
	\prod_{j\in\setu} (v_j + r_j)
	=
	\sum_{\setv \subseteq \setu} \prod_{\substack{j\notin\setv\\j\in\setu}} v_j \prod_{j\in\setv} r_j
	=
	\prod_{j\in\setu} v_j
	+
	\sum_{\emptyset \ne \setv \subseteq \setu} \prod_{\substack{j\notin\setv\\j\in\setu}} v_j \prod_{j\in\setv} r_j
	.
	\]
	We note that the last sum comprises all possible factor selections of either $v_j$ or $r_j$ for all $j\in\setu$ with the condition of always choosing at least one
	$r_j$ (by $\setv \ne \emptyset$). We can group all these selections by the index $i \in \setu$ which is the highest index in $\setu$ for which the factor $r_i$ was selected.
	Then, for all $j$ with $j > i$, factors $v_j$ are chosen, that is, $\prod_{j\in\setu,j>i} v_j$, and for all $j < i$ we have all possible factor selections of either $r_j$ or $v_j$, 
	that is, $\prod_{j\in\setu,j<i} (v_j+r_j)$. Hence, we obtain
	\begin{align} \label{eq:diff-expressions}
		\prod_{j\in\setu} u_j - \prod_{j\in\setu} v_j 
		=
		\sum_{\emptyset \ne \setv \subseteq \setu} \prod_{\substack{j\notin\setv\\j\in\setu}} v_j \prod_{j\in\setv} r_j
		&=
		\sum_{i\in\setu} r_i \prod_{\substack{j\in\setu \\ j > i}} v_j \prod_{\substack{j \in \setu \\ j < i}} (v_j + r_j)
		,
	\end{align}
	from which the result follows by bounding the absolute value of \eqref{eq:diff-expressions} using $|v_j| = |u_j - r_j| \le \bar{u}_j + |r_j|$ and 
	$|v_j + r_j| =  |u_j| \le \bar{u}_j \le \bar{u}_j + |r_j|$, and by multiplying with the factor $\bar{u}_i + |r_i| \ge 1$ for $j = i$.
\end{proof}

We also require the following lemma, which is inspired by \cite[Lemma 3]{HN03}. We will use the summability condition on our weight sequence 
to bound the constants in our error bounds independently of the dimension.

\begin{lemma}\label{lem:logN}
	 Let $\{\gamma_\setu\}_{\setu \subset \N}$ be a sequence of positive weights with $\gamma_\emptyset = 1$ which satisfies
	\begin{equation} \label{eq:summability-gamma-tilde}
		\sum_{j \ge 1} \tilde\gamma_j
		<
		\infty
		,
		\quad\text{where}\quad
		\tilde\gamma_j := \max_{\setv\subseteq\{1 \mcol j-1\}} \frac{\gamma_{\setv\cup\{j\}}}{\gamma_\setv}
		.
	\end{equation}
	Then, for $a > 0$ and any $\delta > 0$, there exists a constant $C_\delta > 0$ such that for all $N \ge 1$ we have
	\begin{equation*}
		\sum_{1 \le |\setu| < \infty} \gamma_\setu \, (a \ln N)^{|\setu|}
		<
		C_\delta \, N^\delta
		.
	\end{equation*}
\end{lemma}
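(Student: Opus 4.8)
The plan is to dominate $\gamma_\setu$ by a product of the one-dimensional quantities $\tilde\gamma_j$, convert the resulting subset sum into an infinite product, and then split that product into a polynomial ``head'' and an exponentially controllable ``tail''. For the first step I would show that every finite $\setu\subset\N$ satisfies $\gamma_\setu\le\prod_{j\in\setu}\tilde\gamma_j$: enumerating $\setu=\{j_1<\cdots<j_k\}$ and telescoping,
\[
  \gamma_\setu
  =
  \gamma_\emptyset\prod_{i=1}^k\frac{\gamma_{\{j_1,\ldots,j_i\}}}{\gamma_{\{j_1,\ldots,j_{i-1}\}}},
\]
and since $\{j_1,\ldots,j_{i-1}\}\subseteq\{1\mcol j_i-1\}$, each factor equals $\gamma_{\setv\cup\{j_i\}}/\gamma_\setv$ for such a $\setv$ and is therefore at most $\tilde\gamma_{j_i}$ by the definition in~\eqref{eq:summability-gamma-tilde}; together with $\gamma_\emptyset=1$ this gives the bound.

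Next, using this domination together with $\tilde\gamma_j\ge 0$ and expanding the infinite product of nonnegative terms (legitimate since $\sum_j\tilde\gamma_j<\infty$),
\[
  \sum_{1\le|\setu|<\infty}\gamma_\setu\,(a\ln N)^{|\setu|}
  \le
  \sum_{|\setu|<\infty}\,\prod_{j\in\setu}\bigl(\tilde\gamma_j\,a\ln N\bigr)
  =
  \prod_{j\ge 1}\bigl(1+\tilde\gamma_j\,a\ln N\bigr)
  <
  \infty .
\]

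Now, given $\delta>0$, I would fix $J=J(\delta)$ with $\sum_{j>J}\tilde\gamma_j\le\delta/(2a)$ (possible since $\sum_j\tilde\gamma_j<\infty$) and write
\[
  \prod_{j\ge1}\bigl(1+\tilde\gamma_j\,a\ln N\bigr)
  =
  \biggl(\prod_{j=1}^{J}\bigl(1+\tilde\gamma_j\,a\ln N\bigr)\biggr)
  \biggl(\prod_{j>J}\bigl(1+\tilde\gamma_j\,a\ln N\bigr)\biggr) .
\]
For the tail, the inequality $1+x\le\rme^{x}$ gives $\prod_{j>J}(1+\tilde\gamma_j a\ln N)\le\exp\bigl(a\ln N\sum_{j>J}\tilde\gamma_j\bigr)\le N^{\delta/2}$. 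The head is a polynomial in $\ln N$ of degree at most $J$ with nonnegative coefficients, hence bounded by $C_\delta\,N^{\delta/2}$ for all $N\ge 1$ (the case $N=1$ being trivial since $\ln 1=0$, and for $N\ge 2$ this is the standard fact that $(\ln N)^J=o(N^{\delta/2})$). Multiplying the two estimates yields $\sum_{1\le|\setu|<\infty}\gamma_\setu(a\ln N)^{|\setu|}\le C_\delta\,N^{\delta}$, as claimed.

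The one genuine subtlety is the split in the last step: keeping the whole product intact would only give the bound $N^{\,a\sum_j\tilde\gamma_j}$, whose exponent is a fixed number that in general exceeds the prescribed $\delta$. The essential idea is therefore to absorb all but finitely many factors into an exponential term of arbitrarily small rate, paying for this with a polynomial-in-$\ln N$ prefactor whose degree $J$ grows as $\delta\to 0$. Everything else — the telescoping bound, the product expansion, and the elementary estimates on head and tail — is routine.
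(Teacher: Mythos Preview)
Your proof is correct and follows essentially the same route as the paper. Both arguments reduce the subset sum to the infinite product $\prod_{j\ge1}(1+a\tilde\gamma_j\ln N)$; the paper reaches this product by an induction on the dimension $s$, while you reach it via the termwise telescoping bound $\gamma_\setu\le\prod_{j\in\setu}\tilde\gamma_j$, which is the same computation unfolded. The only substantive difference is that the paper then invokes \cite[Lemma~3]{HN03} as a black box to pass from the product to $C_\delta N^\delta$, whereas you spell out the head--tail split explicitly (choose $J$ so that $\sum_{j>J}\tilde\gamma_j\le\delta/(2a)$, bound the tail by $N^{\delta/2}$ via $1+x\le\rme^x$, and bound the finite head polynomial in $\ln N$ by $C_\delta N^{\delta/2}$). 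Your version is thus self-contained, which is a mild advantage; otherwise the two arguments are interchangeable.
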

\begin{proof}
	We start with a finite sum over all subsets of $\{1\mcol s\}$ (including the empty set for ease of manipulation of the expressions). At first, we prove by an inductive argument that
	\begin{equation} \label{eq:prod_induction_lnN}
		\sum_{\setu \subseteq \{1 \mcol s\}} \gamma_\setu \, (a \ln N)^{|\setu|}
		\le
		\prod_{j=1}^s \left(1 + a \max_{\setv\subseteq\{1 \mcol j-1\}} \frac{\gamma_{\setv\cup\{j\}}}{\gamma_\setv} \, \ln N\right)
		.
	\end{equation}
	
	For the base case $s=1$, we obtain that
	\begin{equation*}
		\sum_{\setu \subseteq \{1\}} \gamma_\setu \, (a \ln N)^{|\setu|}
		=
		\gamma_{\emptyset} + a \, \gamma_{\{1\}} \ln N
		=
		1 + a \max_{\setv = \emptyset} \frac{\gamma_{\setv\cup\{1\}}}{\gamma_\setv} \, \ln N
	\end{equation*}
	and so inequality \eqref{eq:prod_induction_lnN} holds for $s=1$. 
	
	Consider then $s\ge2$ and assume the estimate in \eqref{eq:prod_induction_lnN} holds for $s-1$, i.e.,
	\begin{equation*}
		\sum_{\setu \subseteq \{1\mcol s-1\}} \gamma_\setu \, (a \ln N)^{|\setu|}
		\le
		\prod_{j=1}^{s-1} \left(1 + a \max_{\setv\subseteq\{1 \mcol j-1\}} \frac{\gamma_{\setv\cup\{j\}}}{\gamma_\setv} \, \ln N\right)
		.
	\end{equation*}
	We can rewrite the term on the left-hand side of \eqref{eq:prod_induction_lnN} as
	\begin{align*}
		\sum_{\setu \subseteq \{1 \mcol s\}} \gamma_\setu \, (a \ln N)^{|\setu|}
		&=
		\sum_{s \notin \setu \subseteq \{1 \mcol s\}} \gamma_\setu \, (a \ln N)^{|\setu|}
		+
		\sum_{s \in \setu \subseteq \{1 \mcol s\}} \gamma_\setu \, (a \ln N)^{|\setu|}
		\\
		&=
		\sum_{\setu \subseteq \{1 \mcol s-1\}} \gamma_\setu \, (a \ln N)^{|\setu|}
		+
		\sum_{\setu \subseteq \{1 \mcol s-1\}} \gamma_{\setu \cup \{s\}} \, (a \ln N)^{|\setu|+1}
		\\
		&=
		\sum_{\setu \subseteq \{1 \mcol s-1\}} \gamma_\setu \, (a \ln N)^{|\setu|} \left(1 + a \, \frac{\gamma_{\setu \cup \{s\}}}{\gamma_\setu} \, \ln N\right)
		\\
		&\le
		\left(1 + a \max_{\setv\subseteq\{1 \mcol s-1\}} \frac{\gamma_{\setv\cup\{s\}}}{\gamma_\setv} \, \ln N\right) \sum_{\setu \subseteq \{1 \mcol s-1\}} \gamma_\setu \, (a \ln N)^{|\setu|}
		\\
		&\le
		\prod_{j=1}^s \left(1 + a \max_{\setv\subseteq\{1 \mcol j-1\}} \frac{\gamma_{\setv\cup\{j\}}}{\gamma_\setv} \, \ln N\right)
		,
	\end{align*}
	where we used the induction hypothesis in the last step to obtain the claimed result. Therefore, we get
	\begin{equation*}
		\sum_{\setu \subseteq \{1 \mcol s\}} \gamma_\setu \, (a \ln N)^{|\setu|}
		\le
		\prod_{j=1}^s \left(1 + a \, \tilde\gamma_j \, \ln N\right)
		\le
		\prod_{j=1}^\infty \left(1 + a \, \tilde\gamma_j \, \ln N\right)
		,
	\end{equation*}
	such that an application of~\cite[Lemma~3]{HN03} yields the result.
\end{proof}

We note that Condition \eqref{eq:summability-gamma-tilde} in Lemma \ref{lem:logN} can be interpreted as stating the influence of the component~$j$ with respect to all previous subsets of components. It is natural that this quantity has to decay if we want $s$ to go to infinity. In particular, in the case of product weights $\gamma_\setu = \prod_{j\in\setu} \gamma_j$, the condition in Lemma \ref{lem:logN} recovers the standard condition for product weights $\sum_{j \ge 1} \gamma_j < \infty$, and for product-and-order-dependent (POD) weights of the form $\gamma_\setu = |\setu|! \, \prod_{j\in\setu} \gamma_j$, as they appear for PDE applications, the condition becomes $\sum_{j \ge 1} j \, \gamma_j < \infty$.

\subsection{The component-by-component digit-by-digit construction} \label{sec:dbd}

The base ingredient of the algorithms in this paper is to construct generating vectors which are optimal coefficients as in Definition~\ref{def:opt_coeff}.
That is, given $N$ we want to find $\bsz(N)$ such that $T(N,\bsz) \le C(\bsgamma,\delta) \, N^{-1+\delta}$ holds for all $\delta > 0$.

In this section, we study the first of two algorithms, namely a \textit{component-by-component digit-by-digit} (CBC-DBD) algorithm. To this end, we assume throughout the section
that $N$ is of the form $N=2^n$ for some positive integer $n$; this choice of $N$ is natural as the components of the generating vector will be constructed digit by digit, i.e., bit by bit 
as we consider $N$ to be a power of~$2$. We first show the following estimate on the quantity $T(N,\bsz)$, which already indicates the target function to be minimized in the CBC-DBD algorithm below.

\begin{theorem} \label{thm:T_target_CBCDBD}
Let $N=2^n$, with $n\ge1$, and let $\bsgamma=\{\gamma_\setu\}_{\setu \subseteq \{1 \mcol s\}}$ be positive weights. Furthermore, let $\bsz = (z_1, \ldots, z_s)\in\{1,\ldots,N-1\}^s$
with $\gcd (z_j,N)=1$ for $1\le j\le s$. Then the following estimate holds:
\begin{align*}
	T(N,\bsz) 
	&\le
	 \sum_{\emptyset \ne \setu \subseteq \{1 \mcol s\}} \frac{\gamma_\setu}{N} (\ln 4 + 2(1 + \ln N))^{\abs{\setu}} 
	-
	\sum_{\emptyset \ne \setu \subseteq \{1 \mcol s\}} \gamma_\setu \, (\ln 4)^{|\setu|} 
	\\
	&\quad+
	\sum_{\emptyset \ne \setu \subseteq \{1 \mcol s\}} \frac{\gamma_\setu}{N} \, 2 |\setu| \, \left(1 + 2 \ln N\right)^{|\setu|} \, (1+\ln N) + \frac{1}{N}\, H_{s,n,\bsgamma} (\bsz)
	,
\end{align*}
with $b(m)$ defined as in~\eqref{eq:def-b-dnint}, and where 
\begin{align*}
	H_{s,n,\bsgamma}(\bsz)
	&:=
	\sum_{k=1}^{2^n-1}
	\sum_{\emptyset \ne \setu \subseteq \{1 \mcol s\}} \gamma_\setu
	\prod_{j\in\setu} \ln \left( \frac1{\sin^{2}(\pi k z_j / 2^n)} \right)
	.
\end{align*}
\end{theorem}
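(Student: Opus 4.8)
The plan is to start from the subset decomposition~\eqref{eq:TN-subsets},
\[
T(N,\bsz)
=
\sum_{\emptyset\ne\setu\subseteq\{1 \mcol s\}}\frac{\gamma_\setu}{N}\sum_{k=0}^{N-1}\prod_{j\in\setu}\vartheta_N(\{kz_j/N\}),
\]
so that it suffices to control each product $\prod_{j\in\setu}\vartheta_N(\{kz_j/N\})$. For $k=0$ this product equals $(2H_{N-1})^{|\setu|}$ with $H_{N-1}=\sum_{m=1}^{N-1}1/m\le 1+\ln N$. For $k\ne 0$ we have $\{kz_j/N\}\in(0,1)$ because $\gcd(z_j,N)=1$ and $0<k<N$; noting that $\ln 4+\vartheta_N(x)=\sum_{m=-(N-1)}^{N-1}\rme^{2\pi\icomp mx}/b(m)$ with $b(m)$ as in~\eqref{eq:def-b-dnint}, Lemma~\ref{lemma:expr_target_function} gives
\[
\vartheta_N(\{kz_j/N\})
=
\ln\bigl(\sin^{-2}(\pi kz_j/N)\bigr)-\ln 4-\frac{\tau^{(k)}_j}{N\,\|kz_j/N\|},
\qquad
|\tau^{(k)}_j|\le 1.
\]

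For each fixed $k\ne0$ I would apply Lemma~\ref{lemma:diff_prod} with $u_j=\vartheta_N(\{kz_j/N\})$, $v_j=\ln(\sin^{-2}(\pi kz_j/N))-\ln 4$, hence remainder $r_j=-\tau^{(k)}_j/(N\|kz_j/N\|)$, $|r_j|\le 1/(N\|kz_j/N\|)\le 1$. A uniform majorant is $\bar u_j=1+2\ln N\ (\ge1)$: since $\sin(\pi x)\le1$ and, by Jordan's inequality, $\sin(\pi/N)\ge 2/N$ on the extreme arguments, $|v_j|=2|\ln(2\sin(\pi\{kz_j/N\}))|\le 2\ln N$, so $|u_j|\le|v_j|+|r_j|\le1+2\ln N$. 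Lemma~\ref{lemma:diff_prod} then gives, for $k\ne0$,
\[
\prod_{j\in\setu}\vartheta_N(\{kz_j/N\})
=
\prod_{j\in\setu}\Bigl(\ln(\sin^{-2}(\pi kz_j/N))-\ln 4\Bigr)
+\theta_{\setu,k}\,(1+2\ln N)^{|\setu|}\sum_{j\in\setu}\frac{1}{N\|kz_j/N\|},
\quad|\theta_{\setu,k}|\le1.
\]
Because $\gcd(z_j,N)=1$, $k\mapsto kz_j\bmod N$ permutes $\{1,\dots,N-1\}$, so $\sum_{k=1}^{N-1}1/(N\|kz_j/N\|)=\sum_{\ell=1}^{N-1}1/\min(\ell,N-\ell)\le 2(1+\ln N)$; summing the remainder terms over $k$ and $\setu$ then produces exactly the third summand $\sum_\setu\tfrac{\gamma_\setu}{N}\,2|\setu|(1+2\ln N)^{|\setu|}(1+\ln N)$ of the asserted bound.

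It remains to bound $\tfrac1N\sum_\setu\gamma_\setu\bigl[(2H_{N-1})^{|\setu|}+\sum_{k=1}^{N-1}\prod_{j\in\setu}(g_j-\ln 4)\bigr]$, where $g_j:=\ln(\sin^{-2}(\pi kz_j/N))$. Expanding $\prod_{j\in\setu}(g_j-\ln 4)$ by choosing, for each $j\in\setu$, the factor $g_j$ or the factor $-\ln 4$, the full-selection term $\prod_{j\in\setu}g_j$ contributes, after summing over $k$ and over $\setu$ against $\gamma_\setu$, precisely $\tfrac1N H_{s,n,\bsgamma}(\bsz)$. The remaining terms — products $\prod_{j\in\setw}g_j$ over proper subsets $\setw\subsetneq\setu$ times powers of $-\ln 4$, together with the pure power $(N-1)(-\ln 4)^{|\setu|}$ and with the $k=0$ contribution $(2H_{N-1})^{|\setu|}$ — I would handle by again writing $g_j=\ln 4+\vartheta_N(\{kz_j/N\})+\tau^{(k)}_j/(N\|kz_j/N\|)$ and using the character identity $\sum_{k=0}^{N-1}\rme^{2\pi\icomp kmz_j/N}=N\,\delta_N(mz_j)$: since $\gcd(z_j,N)=1$, no nonzero $m$ with $|m|<N$ is a multiple of $N$, which forces cancellations that — combined with the binomial identities $\sum_{\setv\subseteq\setu}(-\ln 4)^{|\setv|}(\ln 4)^{|\setu\setminus\setv|}=0$ for $\setu\ne\emptyset$, with $2H_{N-1}\le 2(1+\ln N)$, and with $(-\ln 4)^{|\setw|}\le(\ln 4)^{|\setw|}$ — collapse everything left into the first summand $\sum_\setu\tfrac{\gamma_\setu}{N}(\ln 4+2(1+\ln N))^{|\setu|}$ and the negative summand $-\sum_\setu\gamma_\setu(\ln 4)^{|\setu|}$.

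I expect this last step to be the crux. The proper-subset sums $\sum_{k=1}^{N-1}\prod_{j\in\setw}g_j$ are genuinely of order $N$ (on average $g_j\approx\int_0^1\ln(\sin^{-2}(\pi x))\rd x=\ln 4$), so they cannot be estimated term by term against the $O(1/N)$-sized first and third summands; one must instead exploit that, through the alternating powers of $-\ln 4$ from the expansion and through the inclusion–exclusion expressing $\sum_{k=0}^{N-1}\prod_{j\in\setw}(\ln 4+\vartheta_N(\{kz_j/N\}))$ via truncated dual-lattice sums, the $\Theta(N)$ parts cancel against one another and against the $k=0$ term, leaving only the $N$-independent constant $-\sum_\setu\gamma_\setu(\ln 4)^{|\setu|}$ plus an $O(1/N)$ remainder — while $H_{s,n,\bsgamma}(\bsz)$, the sole quantity depending on the quality of $\bsz$, is carried along untouched. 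The reductions in the first two paragraphs, by contrast, are routine once Lemmas~\ref{lemma:expr_target_function} and~\ref{lemma:diff_prod} are in hand.
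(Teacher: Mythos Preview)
Your first two paragraphs are essentially the paper's argument, and the remainder bound leading to the term $\sum_\setu\tfrac{\gamma_\setu}{N}\,2|\setu|(1+2\ln N)^{|\setu|}(1+\ln N)$ is correct (up to a harmless constant: with your choice $\bar u_j=1+2\ln N$ the factor $\bar u_j+|r_j|$ is $\le 2+2\ln N$, not $1+2\ln N$; taking $\bar u_j=2\ln N$ fixes this, but then you must verify $|\vartheta_N|\le 2\ln N$ directly, not via $|v_j|+|r_j|$).

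The genuine gap is your ``crux'' step. After your application of Lemma~\ref{lemma:diff_prod} you face $\sum_{k=1}^{N-1}\prod_{j\in\setu}(g_j(k)-\ln 4)$ with $g_j(k)=\ln(\sin^{-2}(\pi kz_j/N))$, and you expand binomially to extract $\sum_k\prod_{j\in\setu}g_j$. The leftover cross terms are $\sum_{\emptyset\ne\setw\subsetneq\setu}(-\ln 4)^{|\setu|-|\setw|}A_\setw$ with $A_\setw:=\sum_{k=1}^{N-1}\prod_{j\in\setw}g_j(k)$. For $|\setw|=1$ these are indeed $\bsz$-independent by the permutation argument (in fact $A_{\{j\}}=(N-1)\ln 4-2\ln N$), but for $|\setw|\ge 2$ the quantities $A_\setw$ are genuinely $\bsz$-dependent --- they are precisely lower-dimensional $H$-quantities --- and their coefficients $\sum_{\setu\supsetneq\setw}\gamma_\setu(-\ln 4)^{|\setu|-|\setw|}$ do not vanish for general weights. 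Your proposed re-substitution $g_j=\ln 4+\vartheta_N+\tau^{(k)}_j/(N\|kz_j/N\|)$ followed by character sums only rewrites $A_\setw$ in terms of truncated dual-lattice sums for the subset $\setw$, i.e.\ in terms of the very $T$-type quantities you are trying to bound; no cancellation against the $\bsz$-independent targets $-\sum_\setu\gamma_\setu(\ln 4)^{|\setu|}$ and $\tfrac1N\sum_\setu\gamma_\setu(\ln 4+2(1+\ln N))^{|\setu|}$ occurs.

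The paper sidesteps this entirely by performing one extra, cheap move \emph{before} invoking Lemma~\ref{lemma:diff_prod}: it enlarges the index set from $M_{N,|\setu|}^\ast$ to $M_{N,|\setu|}\setminus\{\bszero\}$ by allowing $m_j=0$ with $1/b(0)=\ln 4>0$ (an inequality, since only nonnegative terms are added). This shifts each factor from $\vartheta_N$ to $\ln 4+\vartheta_N$, so that Lemma~\ref{lemma:diff_prod} can be applied with $u_j=g_j$ and $v_j=\ln 4+\vartheta_N$ (rather than your $u_j=\vartheta_N$, $v_j=g_j-\ln 4$). Then $\prod_{j\in\setu}u_j=\prod_{j\in\setu}g_j$ is already the $H$-integrand --- no binomial expansion, no cross terms --- while the constant $-\sum_\setu\gamma_\setu(\ln 4)^{|\setu|}$ drops out cleanly as the subtracted $\bsm_\setu=\bszero$ contribution, and the $k=0$ summand gives the first term $\tfrac1N\sum_\setu\gamma_\setu(\ln 4+2(1+\ln N))^{|\setu|}$ after the harmonic-sum estimate. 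In short: swap the roles of $u_j$ and $v_j$ in Lemma~\ref{lemma:diff_prod}, which is made possible by first adding the $\ln 4$ offset at the Fourier level via~$b(m)$.
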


\begin{proof}
We rewrite $T(N, \bsz)$ such that products of function values of $\vartheta_N$, defined in~\eqref{eq:vartheta}, are replaced 
by products of values of $\ln(\sin^{-2}(\pi\, \cdot))$.
Note that we need to take care of not evaluating $\ln(\sin^{-2}(\pi x))$ in $x=0$ or $x=1$. Recalling the definition of $b(m)$ in \eqref{eq:def-b-dnint}, we can estimate
\begin{align}
	&\hspace{-5mm}T(N, \bsz) \notag
	=
	\sum_{\emptyset \ne \setu \subseteq \{1 \mcol s\}} \gamma_\setu \sum_{\bsm_\setu \in M_{N,|\setu|}^\ast} \frac{\delta_N(\bsz_\setu\cdot\bsm_\setu)}{\prod_{j\in\setu} |m_j|}
	\notag
	=
	\sum_{\emptyset \ne \setu \subseteq \{1 \mcol s\}} \gamma_\setu \sum_{\bsm_\setu \in M_{N,|\setu|}^\ast} \frac{\delta_N(\bsz_\setu\cdot\bsm_\setu)}{\prod_{j\in\setu} b(m_j)}
	\\ \label{eq:increase-at-0}
	&\le
	\sum_{\emptyset \ne \setu \subseteq \{1 \mcol s\}} \!\!\! \gamma_\setu \!\! \sum_{\bszero \ne \bsm_\setu \in M_{N,|\setu|}} \!\! \frac{\delta_N(\bsz_\setu\cdot\bsm_\setu)}{\prod_{j\in\setu} b(m_j)}
	= \!\!
	\sum_{\emptyset \ne \setu \subseteq \{1 \mcol s\}} \! \frac{\gamma_\setu}{N} \sum_{k=0}^{N-1} 
	\left[ \sum_{\bsm_\setu \in M_{N,|\setu|}} \!\! \frac{\rme^{2\pi\icomp k \,\bsz_\setu\cdot\bsm_\setu/N}}{\prod_{j\in\setu} b(m_j)} - (\ln 4)^{|\setu|} \right]
	\\\notag
	&=
	\sum_{\emptyset \ne \setu \subseteq \{1 \mcol s\}} \! \frac{\gamma_\setu}{N} \! \left[ \sum_{\bsm_\setu \in M_{N,|\setu|}} \! \frac{1}{\prod_{j\in\setu} b(m_j)}
	+
	\sum_{k=1}^{N-1} \prod_{j\in\setu} \left(\ln 4 + \!\!\! \sum_{m \in M_{N,1}^\ast} \!\!\! \frac{\rme^{2\pi\icomp k m z_j / N}}{|m|} \right) \right]
	\notag
	\! - \!\!
	\sum_{\emptyset \ne \setu \subseteq \{1 \mcol s\}} \!\!\!\! \gamma_\setu \, (\ln 4)^{|\setu|}
	\\
	\label{eq:apply-prod-diff}
	&=
	\sum_{\emptyset \ne \setu \subseteq \{1 \mcol s\}} \frac{\gamma_\setu}{N} \left( \sum_{\bsm_\setu \in M_{N,|\setu|}} \frac{1}{\prod_{j\in\setu} b(m_j)}
	+
	\sum_{k=1}^{N-1} \left[ \prod_{j\in\setu} v_j(k) - \prod_{j\in\setu} u_j(k) + \prod_{j\in\setu} u_j(k) \right]
	\right)
	\\ \notag
	&\quad -
	\sum_{\emptyset \ne \setu \subseteq \{1 \mcol s\}} \gamma_\setu \, (\ln 4)^{|\setu|}
	\\
	\notag
	&=
	\sum_{\emptyset \ne \setu \subseteq \{1 \mcol s\}}
	\frac{\gamma_\setu}{N} \sum_{\bsm_\setu \in M_{N,|\setu|}} \frac{1}{\prod_{j\in\setu} b(m_j)}
	+
	\sum_{\emptyset \ne \setu \subseteq \{1 \mcol s\}} \frac{\gamma_\setu}{N} \sum_{k=1}^{N-1} \prod_{j\in\setu} u_j(k)
	\\
	\notag
	&\quad+
	\sum_{\emptyset \ne \setu \subseteq \{1 \mcol s\}} \frac{\gamma_\setu}{N} \sum_{k=1}^{N-1}
	\theta_\setu(k) \left(\prod_{j\in\setu} \left(\bar{u}_j(k) + |r_j(k)|\right)\right) \sum_{j\in\setu} |r_j(k)|
	-
	\sum_{\emptyset \ne \setu \subseteq \{1 \mcol s\}} \gamma_\setu \, (\ln 4)^{|\setu|}
	\\
	\label{eq:T-bound-ln-sin}
	&=
	\sum_{\emptyset \ne \setu \subseteq \{1 \mcol s\}}
	\frac{\gamma_\setu}{N}
	\sum_{\bsm_\setu \in M_{N,|\setu|}} \frac{1}{\prod_{j\in\setu} b(m_j)}
	-
	\sum_{\emptyset \ne \setu \subseteq \{1 \mcol s\}} \gamma_\setu \, (\ln 4)^{|\setu|}
	\\
	\notag
	&\quad+
	\sum_{\emptyset \ne \setu \subseteq \{1 \mcol s\}} \frac{\gamma_\setu}{N} \sum_{k=1}^{N-1}
	\theta_\setu(k) \left(\prod_{j\in\setu} \left(\bar{u}_j(k) + |r_j(k)|\right)\right) \sum_{j\in\setu} |r_j(k)|
	+
	\frac{1}{N}\, H_{s,n,\bsgamma} (\bsz)
	,
\end{align}
where in~\eqref{eq:increase-at-0} we added terms with $m_j=0$ and in~\eqref{eq:apply-prod-diff} we used Lemma~\ref{lemma:diff_prod} with
\begin{align*}
	u_j=u_j(k) &:= \ln \left( \frac1{\sin^{2}(\pi z_j k / N)} \right) ,
	&
	\bar{u}_j=\bar{u}_j(k) &:= 2 \ln N,
	\\
	v_j=v_j(k) &:= \ln 4 + \sum_{m \in M_{N,1}^\ast} \frac{\rme^{2\pi\icomp k m z_j / N}}{|m|},
	&
	r_j=r_j(k) &:= \frac{\tau_j(k)}{N \, \|z_j k / N\|}
	,
\end{align*}
and all $|\theta_\setu(k)| \le 1$ and $|\tau_j(k)| \le 1$. Due to Lemma \ref{lemma:expr_target_function}, Condition (a) in Lemma \ref{lemma:diff_prod} is fulfilled. 
Furthermore, we have for $1 \le k \le N-1$ that 
\begin{equation*}
	\sin^2 \left(\pi \frac{z_j k}{N} \right)
	\ge
	\sin^2 \left( \frac{\pi}{N} \right)
	=
	\sin^2 \left(2 \frac{\pi}{2N} \right)
	\ge
	\left(\frac{\pi}{2N} \right)^2
	\ge
	\left(\frac1N \right)^2 ,
\end{equation*}
where we used that for $x \in [0,\tfrac{\pi}{4}]$ the estimate $\sin(2x) \ge x$ holds. This implies that Conditions (b) and (c) in Lemma \ref{lemma:diff_prod} are fulfilled since 
\begin{equation*}
	\abs{u_j}
	=
	\ln \left(\frac{1}{\sin^{2}\left(\pi z_j k / N\right)} \right)
	\le
	\ln(N^2)
	=
	2 \ln N
	=
	\bar{u}_j ,
\end{equation*}
and the latter expression is not smaller than one as long as $N\ge 2$. Next, we show how to bound the last sum in~\eqref{eq:T-bound-ln-sin} independently 
of the choice of $\bsz$, which can be done as follows:
\begin{align*}
	&\sum_{\emptyset \ne \setu \subseteq \{1 \mcol s\}} \frac{\gamma_\setu}{N} \sum_{k=1}^{N-1}
	\theta_\setu(k) \left(\prod_{j\in\setu} \left(\bar{u}_j(k) + |r_j(k)|\right)\right) \sum_{j\in\setu} |r_j(k)|
	\\
	&\qquad=
	\sum_{\emptyset \ne \setu \subseteq \{1 \mcol s\}} \frac{\gamma_\setu}{N} \sum_{k=1}^{N-1}
	\theta_\setu(k) \left(\prod_{j\in\setu} \left(2 \ln N + \frac{|\tau_j(k)|}{N \, \|z_j k / N\|}\right)\right) \sum_{j\in\setu} \frac{|\tau_j(k)|}{N \, \|z_j k / N\|}
	\\
	&\qquad\le
	\sum_{\emptyset \ne \setu \subseteq \{1 \mcol s\}} \frac{\gamma_\setu}{N} \sum_{k=1}^{N-1}
	\theta_\setu(k) \left(\prod_{j\in\setu} \left(1 + 2 \ln N \right)\right) \sum_{j\in\setu} \frac{|\tau_j(k)|}{N \, \|z_j k / N\|}
	\\
	&\qquad=
	\sum_{\emptyset \ne \setu \subseteq \{1 \mcol s\}} \frac{\gamma_\setu}{N}
	\left(\prod_{j\in\setu} \left(1 + 2 \ln N \right)\right) \sum_{j\in\setu} \sum_{k=1}^{N-1} \frac{\theta_\setu(k) |\tau_j(k)|}{N \, \|z_j k / N\|}
	\\
	&\qquad\le
	\sum_{\emptyset \ne \setu \subseteq \{1 \mcol s\}} \frac{\gamma_\setu}{N}
	\left(\prod_{j\in\setu} \left(1 + 2 \ln N \right)\right) \sum_{j\in\setu} \sum_{k=1}^{N-1} \frac{1}{N \, \|z_j k / N\|}
	\\
	&\qquad\le
	\sum_{\emptyset \ne \setu \subseteq \{1 \mcol s\}} \frac{\gamma_\setu}{N}
	\left(\prod_{j\in\setu} \left(1 + 2 \ln N \right)\right) \sum_{j\in\setu} 2 (1+\ln N)
	\\
	&\qquad\le
	\sum_{\emptyset \ne \setu \subseteq \{1 \mcol s\}} \frac{\gamma_\setu}{N}
	 \, 2 |\setu| \, \left(1 + 2 \ln N \right)^{|\setu|} \, (1+\ln N)
	,
\end{align*}
where we used that $N \norm{\frac{z_j k}{N}} \ge N \norm{\frac{1}{N}} = 1$ and the fact that if $\gcd(z_j,N)=1$, then
\begin{equation*}
	\sum_{k=1}^{N-1} \frac{1}{N \, \|z_j k / N\|}
	\le
	2(1 + \ln N)
	,
\end{equation*}
see \cite[Corollary of Proposition~4]{Kor63}. Finally, using \eqref{eq:estimate_ln}, we obtain the estimate
\begin{equation*}
	\sum_{\bsm_\setu \in M_{N,|\setu|}^\ast} \frac{1}{\prod_{j\in\setu} b(m_j)}
	=
	\prod_{j \in \setu} \left(\ln 4 + 2 \sum_{m=1}^{N-1} \frac{1}{m} \right)
	\le
	(\ln 4 + 2 (1 + \ln N))^{\abs{\setu}}
	,
\end{equation*}
such that using the last two estimates for the third and the first term in \eqref{eq:T-bound-ln-sin}, respectively, yields the claim.

\end{proof}

Theorem \ref{thm:T_target_CBCDBD} implies that if we can find a generating vector $\bsz$ of a lattice rule with $N=2^n$ points for which a good bound 
on $H_{s,n,\bsgamma}(\bsz)$ holds, then for this generating vector we also have a good bound on $T(N,\bsz)$. We remark that here we use the function 
$-2 \ln( \sin(\pi x)) = \ln( \sin^{-2}(\pi x))$, opposed to the function $-2 \ln(2 \sin(\pi x))$ which is used in Section \ref{sec:cbc}. For this choice, certain 
steps in the proof of the result in Theorem \ref{theorem:H-induction} simplify, additionally we can exploit the non-negativity of the function in estimates.

The algorithm which we are considering here is a component-by-component digit-by-digit construction. The CBC-DBD algorithm introduced
below does not directly optimize the function $H_{s,n,\bsgamma}(\bsz)$ in terms of $\bsz$, but, digit-by-digit, the relevant part of this quantity.
To this end, we prove the following lemma which allows us to rewrite the average of $\ln\left( \sin^{-2} (\pi x) \right)$, with the 
base-$2$ digits of $x$ depending on $z \in \{0,1\}$, where we consider the average with respect to the choice of $z$.

\begin{lemma} \label{lemma:average}
	Let $a$ and $k$ be odd integers and $v \ge 2$. Then it holds true that
	\begin{equation*}
	\sum_{z \in \{0,1\}} \ln \left(\frac{1}{\sin^2(\pi k (a + 2^{v-1} z) / 2^v)}\right)
	=
	\ln 4 +  \ln \left( \frac{1}{\sin^2(\pi k a / 2^{v-1})} \right)
	.
	\end{equation*}
\end{lemma}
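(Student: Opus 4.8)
The plan is to reduce the left-hand side to a product formula for sines by brute computation, exploiting the special structure of the shift $2^{v-1}z$. First I would split the sum over $z\in\{0,1\}$ into its two terms and write $s_0 := \sin^2(\pi k a/2^v)$ and $s_1 := \sin^2(\pi k(a+2^{v-1})/2^v)$, so the left-hand side equals $-\ln(s_0 s_1)$. The key observation is that $\pi k(a+2^{v-1})/2^v = \pi k a/2^v + \pi k/2$, and since $k$ is odd, $\pi k/2$ is an odd multiple of $\pi/2$; hence $\sin(\theta + \pi k/2) = \pm\cos\theta$ depending on $k\bmod 4$. Either way $s_1 = \cos^2(\pi k a/2^v)$. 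Therefore $s_0 s_1 = \sin^2(\pi k a/2^v)\cos^2(\pi k a/2^v) = \tfrac14\sin^2(2\pi k a/2^v) = \tfrac14 \sin^2(\pi k a/2^{v-1})$, using the double-angle identity $\sin(2\theta)=2\sin\theta\cos\theta$. Taking $-\ln$ of both sides gives exactly $\ln 4 + \ln(1/\sin^2(\pi k a/2^{v-1}))$, which is the claim.

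Concretely I would carry this out in three short steps: (i) observe $\sin^2(\pi k(a+2^{v-1}z)/2^v) = \sin^2(\pi k a/2^v + \pi k z/2)$ and note that for $z=1$, since $k$ is odd, $\sin^2(\theta + \pi k/2) = \cos^2\theta$; (ii) multiply the $z=0$ and $z=1$ values to get $\sin^2\cos^2 = \tfrac14\sin^2(2\theta)$ with $\theta = \pi k a/2^v$, so that $2\theta = \pi k a/2^{v-1}$; (iii) take logarithms of the reciprocal. The hypotheses $a$ odd and $v\ge 2$ are used only to guarantee none of the sine arguments is an integer multiple of $\pi$, so that all the logarithms are finite and the manipulations are legitimate: for $1\le$ (the relevant range of) arguments, $ka/2^v \notin \tfrac12\mathbb{Z}$ would be needed, and $a$ odd with $v\ge 2$ ensures $ka/2^{v-1}$ is not an even integer, hence $\sin(\pi k a/2^{v-1})\ne 0$; similarly $ka/2^v$ is a reduced fraction with denominator a power of $2$ at least $4$ when $v\ge 2$, so $\sin(\pi k a/2^v)$ and $\cos(\pi k a/2^v)$ are both nonzero.

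I do not expect any genuine obstacle here; the only point requiring a little care is the sign/parity bookkeeping in step (i) — one must note that whether $\sin(\theta+\pi k/2)$ equals $+\cos\theta$ or $-\cos\theta$ (i.e.\ whether $k\equiv 1$ or $k\equiv 3\pmod 4$) is irrelevant because everything is squared. After squaring, the identity is a one-line consequence of the double-angle formula, and the logarithm turns the factor $\tfrac14$ into the additive constant $\ln 4$.
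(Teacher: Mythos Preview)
Your proof is correct and follows essentially the same route as the paper's: both rewrite the $z=1$ term via the shift $\pi k/2$ (for odd $k$) to turn $\sin^2$ into $\cos^2$, then apply the double-angle identity $\sin^2\theta\cos^2\theta=\tfrac14\sin^2(2\theta)$ and take logarithms. Your additional remark that the hypotheses $a$ odd and $v\ge 2$ guarantee nonvanishing of the sines (hence finiteness of the logs) is a nice point that the paper's proof leaves implicit.
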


\begin{proof}
	Since $k$ is odd, we have that 
	\begin{equation*}
		\sin^2 \left( \frac{\pi k (a + 2^{v-1})}{2^v} \right) 
		= 
		\sin^2 \left( \frac{\pi k a}{2^v} + \frac{\pi (k-1)}{2} + \frac{\pi}{2} \right) 
		= 
		\cos^2 \left( \frac{\pi k a}{2^v} \right)
	\end{equation*}
	and therefore, using that $\sin(x) \cos(x) = \sin(2x)/2$, we obtain
	\begin{align*}
		&\sum_{z \in \{0,1\}} \ln \left( \frac{1}{\sin^2(\pi k (a + 2^{v-1} z) / 2^v)} \right)
		=
		\ln \left( \frac{1}{\sin^2(\pi k a / 2^v)} \right) + \ln \left( \frac{1}{\cos^2(\pi k a / 2^v)} \right)
		\\
		&\quad=
		\ln \left( \frac{1}{\sin^2(\pi k a / 2^v) \cos^2(\pi k a / 2^v)} \right)
		=
		\ln \left( \frac{4}{\sin^2(\pi k a / 2^{v-1})} \right)
		=
		\ln 4 + \ln \left( \frac{1}{\sin^2(\pi k a / 2^{v-1})} \right)
	\end{align*}
	which yields the claimed equality.
\end{proof}

Furthermore, we note that the function $\sin^2(\pi x)$ is periodic with period $1$ such that 
\begin{equation} \label{eq:prop-ln-sin}
	\sin^2\left( \pi \frac{k z}{2^t} \right)
	=
	\sin^2\left( \pi \frac{k z \bmod 2^t}{2^t} \right)
\end{equation}
for integers $k,z$ and $t \ge 1$.

The next lemma motivates the choice of our quality function and, furthermore, shows that we can decide on the bits of the components of our generating vector $\bsz$ 
in a digit-by-digit fashion. In particular, assuming we have already fixed the first $v-1$ bits of $z_s$ and want to calculate how good a specific choice for the $v$-th bit is,
in terms of  $H_{s,n,\bsgamma}$, we can average over all remaining $n-v$ bits, and still claim that this is at least as good as the average over all bits.

\begin{lemma} \label{lemma:bit-average-H}
	For an integer $v \in \{1,\ldots,n\}$, with $n \in \N$, let $z \in \{0,1\}$ and $\bsz \in \Z^s$, with all components $z_j$ odd, and where the first $v-1$ bits of $z_s$ have been selected, that is, $z_s \in \Z_{2^{v-1}}$. Then the average of $H_{s,n,\bsgamma}$ over all $n-v$ remaining bit choices for $z_s$ is given by
	\begin{align} \label{eq:average-zs}
		&\frac1{2^{n-v}} \sum_{\bar{z} \in \Z_{2^{n-v}}} H_{s,n,\bsgamma}\left(z_1,\ldots,z_{s-1}, z_s + z \, 2 ^{v-1} + \bar{z} \, 2^v\right) \notag \\
		&= \!
		\sum_{t=v}^n \frac1{2^{t-v}} \!\!\!\!\!\!\!\! \sum_{\substack{k=1\\k\equiv1\tpmod{2}}}^{2^t-1} \!
		\sum_{\setu \subseteq \{1 \mcol s-1\}} \!\!\!\!\!\gamma_{\setu\cup\{s\}}
		\ln \!\left( \frac1{\sin^{2}(\pi k (z_s + z \, 2 ^{v-1}) / 2^v)} \right) \!\prod_{j\in\setu} \ln\!\left( \frac1{\sin^{2}(\pi k z_j / 2^t)} \right) \!+ S_{n,v,\bsgamma}(\bsz)
		,
	\end{align}
	where the term $S_{n,v,\bsgamma}(\bsz)$, which is independent of $z$ and $\bar{z}$, is given by
	\begin{align} \label{eq:S_n_v_gamma}
		S_{n,v,\bsgamma}(\bsz)
		&= \notag
		\sum_{t=1}^{v-1} \sum_{\substack{k=1\\k\equiv1\tpmod{2}}}^{2^t-1}
		\sum_{\emptyset \ne \setu \subseteq \{1 \mcol s\}} \gamma_{\setu} \prod_{j\in\setu} \ln \left(\frac1{\sin^{2}(\pi k z_j / 2^t)} \right) \notag
		\\
		&\quad\!\!+
		\sum_{t=v}^n \sum_{\substack{k=1\\k\equiv1\tpmod{2}}}^{2^t-1}
		\sum_{\emptyset \ne \setu \subseteq \{1 \mcol s-1\}} \gamma_\setu \prod_{j\in\setu} \ln \left(\frac1{\sin^{2}(\pi k z_j / 2^t)}\right)
		\\
		&\quad\!\!+ \notag 
		\sum_{t=v}^n \frac{(2^{t-v} - 1) \ln 4}{2^{t-v}} \!\!\!\!\!\!\! \sum_{\substack{k=1\\k\equiv1\tpmod{2}}}^{2^t-1} \!
		\sum_{\setu \subseteq \{1 \mcol s-1\}} \!\!\!\!\! \gamma_{\setu\cup\{s\}} \prod_{j\in\setu} \ln \left( \frac1{\sin^{2}(\pi k z_j / 2^t)}\right)
		.
	\end{align}
\end{lemma}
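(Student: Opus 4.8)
The plan is to first rewrite $H_{s,n,\bsgamma}(\bsz)$ by sorting the index $k\in\{1,\ldots,2^n-1\}$ according to its $2$-adic valuation. Writing $k=2^{n-t}k'$ with $k'$ odd and $1\le k'\le 2^t-1$, the map $k\mapsto(t,k')$ is a bijection onto the pairs with $t\in\{1,\ldots,n\}$, and by the periodicity~\eqref{eq:prop-ln-sin} we have $\sin^2(\pi k z_j/2^n)=\sin^2(\pi k' z_j/2^t)$. Hence
\[
	H_{s,n,\bsgamma}(\bsz)
	=
	\sum_{t=1}^n \sum_{\substack{k=1\\k\equiv1\tpmod2}}^{2^t-1}\ \sum_{\emptyset\ne\setu\subseteq\{1\mcol s\}} \gamma_\setu \prod_{j\in\setu}\ln\!\left(\frac{1}{\sin^{2}(\pi k z_j/2^t)}\right).
\]
I would then split the subset sum into the terms with $s\notin\setu$ and those with $s\in\setu$, and split the range of $t$ into $\{1,\ldots,v-1\}$ and $\{v,\ldots,n\}$. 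The terms with $s\notin\setu$ do not involve $z_s$; the terms with $s\in\setu$ and $t\le v-1$ depend on $z_s$ only through $z_s\bmod 2^t$, which is already fixed by the first $v-1$ bits (since $2^{v-1}\equiv 0\pmod{2^t}$). Thus these pieces are independent of the new bit $z$ and of $\bar z$. Since $\{\text{nonempty }\setu\subseteq\{1\mcol s-1\}\}\sqcup\{\setu\cup\{s\}:\setu\subseteq\{1\mcol s-1\}\}$ is the collection of all nonempty subsets of $\{1\mcol s\}$, a direct comparison shows that the $t\le v-1$ pieces sum to the first line of $S_{n,v,\bsgamma}(\bsz)$ in~\eqref{eq:S_n_v_gamma}, while the $s\notin\setu$, $t\ge v$ pieces give its second line. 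This leaves only the terms with $s\in\setu$ and $t\ge v$ to be averaged.

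For that remaining piece, when $z_s$ is replaced by $z_s+z\,2^{v-1}+\bar z\,2^v$, the $t$-th summand depends on $\bar z\in\Z_{2^{n-v}}$ only through $\bar z\bmod 2^{t-v}$ (because $t\ge v$), and the reduction $\Z_{2^{n-v}}\to\Z_{2^{t-v}}$ is $2^{n-t}$-to-one, so the average over $\bar z\in\Z_{2^{n-v}}$ equals the average over $y\in\Z_{2^{t-v}}$. Writing $w:=z_s+z\,2^{v-1}$ (which is odd, since the least significant bit of $z_s$ is $1$) and $f_\ell(x):=\ln(\sin^{-2}(\pi k x/2^\ell))$, the key computation is
\[
	\sum_{y\in\Z_{2^{t-v}}} f_t(w+2^v y)
	=
	(2^{t-v}-1)\,\ln 4 + f_v(w),
\]
which I would prove by repeatedly peeling off the top bit of $y$: each peeling invokes Lemma~\ref{lemma:average} (with $k$ odd and the relevant ``$a$'' odd, the level running from $t$ down to $v+1$, all $\ge 2$), lowers the level by one, and contributes one factor $\ln 4$; after $t-v$ steps the geometric sum $2^{t-v-1}+\cdots+1=2^{t-v}-1$ of these factors remains alongside $f_v(w)$.

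Substituting this identity back and dividing by $2^{t-v}$, the $f_v(w)/2^{t-v}$ part yields exactly the displayed main term of~\eqref{eq:average-zs} (the sum over $t$ from $v$ to $n$ with weight $1/2^{t-v}$ and the $z_s$-factor evaluated at level $v$), while the $(2^{t-v}-1)\ln 4\,/\,2^{t-v}$ part, now free of $z$ and $\bar z$, is precisely the third line of $S_{n,v,\bsgamma}(\bsz)$; adding it to the two lines already collected finishes the proof. The only genuinely delicate point is the iterated application of Lemma~\ref{lemma:average}: one must verify at each step that the folded argument remains odd and the level stays $\ge 2$, and keep careful track of the accumulating $\ln 4$ terms and of the index ranges; the remainder is bookkeeping of which summands survive the averaging.
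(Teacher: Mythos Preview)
Your proposal is correct and follows essentially the same approach as the paper's proof: both rewrite $H_{s,n,\bsgamma}$ via the dyadic decomposition of $k$, split according to $t<v$ versus $t\ge v$ and $s\in\setu$ versus $s\notin\setu$, reduce the $\bar z$-average to an average over $\Z_{2^{t-v}}$ by periodicity, and then compute that average by iterated application of Lemma~\ref{lemma:average}. Your explicit statement of the key sum identity $\sum_{y\in\Z_{2^{t-v}}} f_t(w+2^v y)=(2^{t-v}-1)\ln 4+f_v(w)$ and your check that the folded argument stays odd at each peeling step are, if anything, slightly more careful than the paper's presentation.
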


\begin{proof}
	Firstly, note that due to the general identity
	\begin{equation*}
		\sum_{k=1}^{2^p-1} f(k/2^p)
		=
		\sum_{t=1}^p \sum_{\substack{k=1\\k\equiv1\tpmod{2}}}^{2^t-1} f(k/2^t)
		,
	\end{equation*}
	for $f: \R \to \R$ and $p \in \N$, the quantity $H_{s,n,\bsgamma}$ can be rewritten as
	\begin{equation} \label{eq:rewrite-H}
		H_{s,n,\bsgamma}(\bsz)
		=
		\sum_{t=1}^n \sum_{\substack{k=1\\k\equiv1\tpmod{2}}}^{2^t-1} 
		\sum_{\emptyset \ne \setu \subseteq \{1 \mcol s\}} \gamma_\setu \prod_{j\in\setu} \ln \left(\frac1{\sin^{2}(\pi k z_j / 2^t)}\right)
		.
	\end{equation}
	The first summand in the definition of $S_{n,v,\bsgamma}(\bsz)$ in \eqref{eq:S_n_v_gamma} originates from relation \eqref{eq:prop-ln-sin} since 
	$z_s + z \, 2 ^{v-1} + \bar{z} \, 2^v \bmod 2^t = z_s$ for all $t=1,\ldots,v-1$. The remaining terms in \eqref{eq:average-zs} are obtained by repeated use of the 
	one bit averaging result in Lemma~\ref{lemma:average}. Writing $\tilde{z}_j = z_j$ for $j \in \{1 \mcol s-1\}$ and $\tilde{z}_s = z_s + z \, 2 ^{v-1} + \bar{z} \, 2^v$, we obtain
	\begin{align*}
		&\frac1{2^{n-v}} \sum_{\bar{z} \in \Z_{2^{n-v}}} \sum_{t=v}^n \sum_{\substack{k=1\\k\equiv1\tpmod{2}}}^{2^t-1}
		\sum_{\setu \subseteq \{1 \mcol s\}} \gamma_\setu \prod_{j\in\setu} \ln \left(\frac1{\sin^{2}(\pi k \tilde{z}_j / 2^t)}\right) \\
		&\quad=
		\sum_{t=v}^n \sum_{\substack{k=1\\k\equiv1\tpmod{2}}}^{2^t-1}
		\sum_{\emptyset \ne \setu \subseteq \{1 \mcol s-1\}} \gamma_\setu \prod_{j\in\setu} \ln \left(\frac1{\sin^{2}(\pi k z_j / 2^t)}\right) 
		\\
		&\quad+ 
		\frac1{2^{n-v}} \sum_{\bar{z} \in \Z_{2^{n-v}}} \sum_{t=v}^n \!\!\!\! \sum_{\substack{k=1\\k\equiv1\tpmod{2}}}^{2^t-1} \!\!
		\sum_{\setu \subseteq \{1 \mcol s-1\}} \!\!\!\!\!\! \gamma_{\setu \cup \{s\}} \!\!
		\left[\prod_{j\in\setu} \ln \!\left(\frac1{\sin^{2}(\pi k z_j / 2^t)} \right) \right] \! \ln \!\left( \frac1{\sin^{2}(\pi k \tilde{z}_s / 2^t)} \right)
		.
	\end{align*}
	For the last expression, we can pull the average over $\bar{z} \in \Z_{2^{n-v}}$ inside and then obtain, for $t \in \{v+1,\ldots,n\}$, 
	by \eqref{eq:prop-ln-sin} and repeated use of Lemma~\ref{lemma:average}, 
	\begin{align*}
		&\frac1{2^{n-v}} \sum_{\bar{z} \in \Z_{2^{n-v}}} \ln \left(\frac1{\sin^{2}(\pi k (z_s + z \, 2 ^{v-1} + \bar{z} \, 2^v) / 2^t)} \right) \\
		&\quad=
		\frac{2^{n-t}}{2^{n-v}} \sum_{\bar{z} \in \Z_{2^{t-v}}} \ln \left( \frac1{\sin^{2}(\pi k (z_s + z \, 2 ^{v-1} + \bar{z} \, 2^v) / 2^t)} \right) \\
		&\quad=
		2^{v-t} \sum_{\bar{z} \in \Z_{2^{t-1-v}}} \left[ \ln 4 + \ln \left(\frac1{\sin^{2}(\pi k (z_s + z \, 2 ^{v-1} + \bar{z} \, 2^v) / 2^{t-1})} \right) \right] \\
		&\quad=
		2^{-1} \ln 4 + 2^{v-t} \sum_{\bar{z} \in \Z_{2^{t-1-v}}} \ln \left( \frac1{\sin^{2}(\pi k (z_s + z \, 2 ^{v-1} + \bar{z} \, 2^v) / 2^{t-1})} \right) \\
		&\quad=
		\ln 4 \left( \sum_{\ell=1}^{t-v} 2^{-\ell} \right) + 2^{v-t} \ln \left( \frac1{\sin^{2}(\pi k (z_s + z \, 2 ^{v-1}) / 2^v)} \right) \\
		&\quad=
		(1 - 2^{v-t}) \ln 4 + 2^{v-t} \ln \left( \frac1{\sin^{2}(\pi k (z_s + z \, 2 ^{v-1}) / 2^v)} \right)
		,
	\end{align*}
	which together with the previous identity yields the claim.
\end{proof}

We note that in Lemma \ref{lemma:bit-average-H} only the first term in \eqref{eq:average-zs} depends on the $v$-th bit $z$ of $z_s$, while $S_{n,v,\bsgamma}(\bsz)$ is independent of it. 
This leads us to introducing the following digit-wise quality function which is based on the first term in \eqref{eq:average-zs} and additionally the second term in \eqref{eq:S_n_v_gamma}, which is independent of $z_s$, modified by an additional factor. We include the second term in \eqref{eq:S_n_v_gamma} in order to obtain a more easily computable quality function for the implementation of the algorithm in Section \ref{sec:fast_impl}.

\begin{definition}[Digit-wise quality function] \label{def:h_rv}
	Let $x \in \N$ be an odd integer, $n, s \in \N$ be positive integers, and let $\bsgamma=\{\gamma_\setu\}_{\setu\subseteq \{1 \mcol s\}}$
	be a sequence of positive weights. For $1 \le v \le n$ and $1 \le r \le s$ and odd integers $z_1,\ldots,z_{r-1} \in \Z$ we define the 
	quality function $h_{r,n,v,\bsgamma}: \Z \to \R$ as
	\begin{align*} 
		h_{r,n,v,\bsgamma}(x) 
		&:=
		\sum_{t=v}^{n} \frac{1}{2^{t-v}} \sum_{\substack{k=1 \\ k \equiv 1 \tpmod{2}}}^{2^t-1} 
		\left[ \sum_{\emptyset \ne \setu\subseteq \{1 \mcol r-1\}} \gamma_{\setu} 
		\prod_{j\in\setu} \ln \left( \frac{1}{\sin^2(\pi k z_j / 2^t)} \right) \right.\nonumber\\ 
		&\qquad\qquad\qquad\qquad\qquad+
		\left.\sum_{\setu\subseteq \{1 \mcol r-1\}} \!\!\!\! \gamma_{\setu\cup \{r\}} 
		\ln \left( \frac{1}{\sin^2(\pi k x / 2^v)} \right) \prod_{j\in\setu} \ln \left( \frac{1}{\sin^2(\pi k z_j / 2^t)}\right) \right]
		.
	\end{align*} 
\end{definition}

Note that while the quantity $h_{r,n,v,\bsgamma}$ depends on the integers $z_1,\ldots,z_{r-1}$, this dependency is not explicitly visible in our notation. Nevertheless, in the following these integers will always be the components of the generating vector which were selected in the previous steps of our algorithm. Based on $h_{r,n,v,\bsgamma}$ the component-by-component digit-by-digit algorithm is formulated as follows.

\begin{algorithm}[H] 
	\caption{Component-by-component digit-by-digit construction}	
	\label{alg:cbc-dbd}
	\vspace{5pt}
	\textbf{Input:} Integer $n \in \N$, dimension $s$ and positive weights $\bsgamma=\{\gamma_\setu\}_{\setu\subseteq \{1 \mcol s\}}$. \\
	\vspace{-7pt}
		\begin{algorithmic}
			\STATE Set $z_{1,n} = 1$ and $z_{2,1} = \cdots = z_{s,1} = 1$.
			\vspace{5pt}
			\FOR{$r=2$ \TO $s$}
			\FOR{$v=2$ \TO $n$}
			\STATE $z^{\ast} = \underset{z \in \{0,1\}}{\argmin} \; h_{r,n,v,\bsgamma}(z_{r,v-1} + z \, 2^{v-1})$
			\STATE $z_{r,v} = z_{r,v-1} + z^{\ast} \, 2^{v-1}$
			\ENDFOR
			\ENDFOR
			\vspace{5pt}
			\STATE Set $\bsz = (z_1,\ldots,z_s)$ with $z_r := z_{r,n}$ for $r=1,\ldots,s$.
		\end{algorithmic}
	\vspace{5pt}
	\textbf{Return:} Generating vector $\bsz = (z_1,\ldots,z_s)$ for $N=2^n$.
\end{algorithm}
\noindent
The resulting integer vector $\bsz = (z_1,\ldots,z_s) \in \Z^s$ with $z_j < 2^n$ for $j=1,\ldots,s$ is then used as the generating vector of a lattice rule
with $N=2^n$ points in $s$ (or fewer) dimensions.

\subsubsection{Error convergence behavior of the constructed lattice rules}

In the following, we show that, under certain conditions on the weights $\bsgamma$, Algorithm \ref{alg:cbc-dbd} constructs generating vectors which are optimal coefficients modulo $N$ according to Definition \ref{def:opt_coeff}. As indicated by Theorem \ref{thm:T_target_CBCDBD}, we need to show that the quantity $H_{s,n,\bsgamma}(\bsz)$ is sufficiently small for generating vectors constructed by Algorithm \ref{alg:cbc-dbd}.

\begin{theorem} \label{theorem:H-induction}
	Let $n, s \in \N$, $N=2^n$, and let $\bsgamma=\{\gamma_\setu\}_{\setu\subseteq \{1 \mcol s\}}$ be a sequence of positive weights with $\gamma_\emptyset=1$. 
	Furthermore, let the generating vector $\bsz \in \Z_N^s$ be constructed by \RefAlg{alg:cbc-dbd}. Then the following estimate holds
	\begin{equation} \label{eq:estimate-H-induction}
	H_{s,n,\bsgamma}(\bsz) 
	\le 
	H_{s-1,n,\bsgamma}(\bsz_{\{1 \mcol s-1\}}) + \ln 4 \left[ \gamma_{\{s\}} N + H_{s-1,n,\bsgamma \cup \{s\}}(\bsz_{\{1 \mcol s-1\}}) \right]
	\end{equation}
	with weight sequence $\bsgamma \cup \{s\} = \{\gamma_{\setu \cup \{s\}}\}_{\setu \subseteq \{1 \mcol s-1\}}$.
\end{theorem}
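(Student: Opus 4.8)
The plan is to combine a greedy ``minimum $\le$ average'' chain over the bits of $z_s$, supplied by \RefLem{lemma:bit-average-H}, with an explicit averaging estimate for $\ln(\sin^{-2}(\pi\,\cdot))$ over the odd residues modulo~$2^n$. Throughout, $\bsz=(z_1,\dots,z_s)$ denotes the output of \RefAlg{alg:cbc-dbd}, so that $z_1,\dots,z_{s-1}$ are already fixed when $z_s$ is constructed, all components are odd, and every quantity $\ln(\sin^{-2}(\pi k z_j/2^t))$ below is finite and non-negative (since $\sin^2\le1$ and $\gcd(z_j,2^n)=1$, so no sine in sight vanishes).

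First I would record the exact splitting obtained from \eqref{eq:rewrite-H} upon separating the subsets $\setu\subseteq\{1\mcol s\}$ into those with $s\notin\setu$ and those with $s\in\setu$:
\[
  H_{s,n,\bsgamma}(\bsz) = H_{s-1,n,\bsgamma}(\bsz_{\{1\mcol s-1\}}) + G(z_s),\qquad
  G(z_s) := \sum_{k=1}^{2^n-1}\ \sum_{\setu\subseteq\{1\mcol s-1\}} \gamma_{\setu\cup\{s\}}\, \ln\left(\frac{1}{\sin^2(\pi k z_s/2^n)}\right)\prod_{j\in\setu}\ln\left(\frac{1}{\sin^2(\pi k z_j/2^n)}\right).
\]
Since the first summand does not involve $z_s$, it suffices to bound the value of $G(z_s)$ produced by the algorithm by $\ln 4\,[\gamma_{\{s\}}N + H_{s-1,n,\bsgamma\cup\{s\}}(\bsz_{\{1\mcol s-1\}})]$.

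For the greedy part, for $v=1,\dots,n$ let $\Phi_v$ be the average of $H_{s,n,\bsgamma}(z_1,\dots,z_{s-1},\cdot)$ over the bit positions $v+1,\dots,n$ of the last argument, with bits $1,\dots,v$ frozen at the values $z_{s,v}$ chosen by the algorithm; thus $\Phi_n=H_{s,n,\bsgamma}(\bsz)$ and $\Phi_1=2^{-(n-1)}\sum_{z_s\text{ odd}}H_{s,n,\bsgamma}(z_1,\dots,z_{s-1},z_s)$. \RefLem{lemma:bit-average-H} writes, for a fixed bit~$v$, this average as $A_v(z)+S_{n,v,\bsgamma}(\bsz)$ with $S_{n,v,\bsgamma}(\bsz)$ independent of that bit, and matching \RefDef{def:h_rv} term by term (taking care that the $z_s$-factor carries denominator $2^v$ while the others carry $2^t$) shows $h_{s,n,v,\bsgamma}(z_{s,v-1}+z\,2^{v-1})=A_v(z)+B_v$ with $B_v$ independent of~$z$. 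Hence the algorithm's choice $z^{\ast}$ minimizes $A_v(\cdot)$, so for $v=2,\dots,n$ we get $\Phi_v = A_v(z^{\ast})+S_{n,v,\bsgamma}(\bsz)\le \tfrac12\sum_{z\in\{0,1\}}A_v(z)+S_{n,v,\bsgamma}(\bsz)=\Phi_{v-1}$, and chaining yields $H_{s,n,\bsgamma}(\bsz)=\Phi_n\le\Phi_1 = H_{s-1,n,\bsgamma}(\bsz_{\{1\mcol s-1\}})+2^{-(n-1)}\sum_{z_s\text{ odd}}G(z_s)$. (For $s=1$ the outer loop is empty and this step is vacuous; then $z_1=1$, but $G(1)$ already equals its average over odd residues by re-indexing $k\mapsto kz_1 \bmod 2^n$, so the conclusion persists.)

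It remains to bound $2^{-(n-1)}\sum_{z_s\text{ odd}}G(z_s)$. Interchanging the sums over $k$ and over odd $z_s$, the crucial quantity is the inner average $2^{-(n-1)}\sum_{z_s\text{ odd}}\ln(\sin^{-2}(\pi k z_s/2^n))$; writing $k=2^a k'$ with $k'$ odd (so $0\le a\le n-1$), the substitution $w\equiv k' z_s \pmod{2^n}$ (a bijection of the odd residues) and the periodicity of $\sin^2$ collapse this to $2^{-(q-1)}\sum_{w\text{ odd},\,0<w<2^q}\ln(\sin^{-2}(\pi w/2^q))$ with $q=n-a\ge1$. Pairing $w$ with $w+2^{q-1}$ and applying \RefLem{lemma:average} with $k=1$ gives the recursion $O_q = 2^{q-2}\ln 4 + O_{q-1}$ with $O_1=0$, hence $O_q=(2^{q-1}-1)\ln 4$, so the inner average is $\le\ln 4$ for every $k$. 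Substituting this, pulling out $\ln 4$, splitting off the $\setu=\emptyset$ term (which contributes $\ln 4\,\gamma_{\{s\}}(2^n-1)\le\ln 4\,\gamma_{\{s\}}N$) and recognising the remaining $\setu\ne\emptyset$ double sum as $\ln 4\,H_{s-1,n,\bsgamma\cup\{s\}}(\bsz_{\{1\mcol s-1\}})$ produces exactly the right-hand side of \eqref{eq:estimate-H-induction}. The main obstacle is sharpness of this averaging estimate: bounding $\sum_{0<w<2^q}\ln(\sin^{-2}(\pi w/2^q))$ instead of its odd part loses a factor~$2$ and would only yield the constant $2\ln 4$, so one must restrict to odd~$w$ and use the exact telescoping (equivalently $\prod_{w=1}^{M-1}\sin(\pi w/M)=M/2^{M-1}$); the remaining steps—the subset splitting and the bookkeeping identifying $h_{s,n,v,\bsgamma}$ with $A_v(z)+B_v$—are routine.
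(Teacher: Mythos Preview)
Your proof is correct and follows essentially the same route as the paper's. Both arguments chain the greedy inequalities $\Phi_n\le\Phi_{n-1}\le\cdots\le\Phi_1$ via \RefLem{lemma:bit-average-H} to reduce to the full average over odd $z_s$, and then evaluate that average; the only cosmetic difference is that the paper obtains the final average by invoking \RefLem{lemma:bit-average-H} once more with $v=1$ (where the $z$-dependent term vanishes since $\ln(\sin^{-2}(\pi k/2))=0$), whereas you compute the inner average $2^{-(q-1)}O_q=(1-2^{1-q})\ln4$ directly from \RefLem{lemma:average}---but these are the same calculation, yielding identical intermediate coefficients before the common relaxation to $\ln 4$.
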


\begin{proof}
	We will prove the stated estimate via an inductive argument over the selection of the $n-1$ bits of the component $z_s$. We start with the most significant bit of $z_s$, i.e., the $n$-th bit. According to \RefAlg{alg:cbc-dbd}, this bit was selected by minimizing $h_{s,n,n,\bsgamma}(z_{s,n-1} + z \, 2^{n-1})$ with respect to the choices $z \in \{0,1\}$, where $z_{s,n-1}$ has been determined in the previous steps of the algorithm. By Lemma \ref{lemma:bit-average-H} (with $v=n$) and Definition \ref{def:h_rv}, this is equivalent to minimizing 
	\begin{equation*}
		H_{s,n,\bsgamma}(z_1,\ldots,z_{s-1}, z_{s,n-1} + z \, 2 ^{n-1})
	\end{equation*}
	with respect to $z \in \{0,1\}$. By the standard averaging argument, this yields
	\begin{align} \label{eq:target_quantity}
		H_{s,n,\bsgamma}(\bsz)
		&=
		\argmin_{\bar{z} \in \{0,1\}} H_{s,n,\bsgamma}(\bsz_{\{1 \mcol s-1\}},z_{s,n-1} + \bar{z} \, 2^{n-1}) \notag
		\\
		&\le
		\frac1{2} \sum_{\bar{z} \in \Z_{2}} H_{s,n,\bsgamma}\left(\bsz_{\{1 \mcol s-1\}}, z_{s,n-1} + \bar{z} \, 2^{n-1} \right) \notag
		\\
		&=
		\frac1{2} \sum_{\bar{z} \in \Z_{2}} H_{s,n,\bsgamma}\left(\bsz_{\{1 \mcol s-1\}}, z_{s,n-2} + z \, 2^{n-2} + \bar{z} \, 2^{n-1} \right)
		,
	\end{align}
	where $z_{s,n-1}$ has been split up into $z_{s,n-2}$ and $z \, 2^{n-2}$ in accordance with \RefAlg{alg:cbc-dbd} such that $z \in \{0,1\}$
	is the previously selected $(n-1)$-th bit of $z_s$.
	
	Similarly, we see that the $(n-1)$-th bit of $z_s$ has been selected in \RefAlg{alg:cbc-dbd} by minimizing $h_{s,n,n-1,\bsgamma}(z_{s,n-2} + z \, 2^{n-2})$ with respect to $z \in \{0,1\}$. Again by Lemma \ref{lemma:bit-average-H} (with $v=n-1$) and Definition \ref{def:h_rv}, this is equivalent to minimizing 
	\begin{equation*}
		\frac1{2} \sum_{\bar{z} \in \Z_{2}} H_{s,n,\bsgamma}\left(\bsz_{\{1 \mcol s-1\}}, z_{s,n-2} + z \, 2^{n-2} + \bar{z} \, 2^{n-1} \right)
	\end{equation*}
	with respect to $z \in \{0,1\}$. By the averaging argument, we obtain that
	\begin{align*}
		&\argmin_{z \in \{0,1\}} \frac1{2} \sum_{\bar{z} \in \Z_{2}} H_{s,n,\bsgamma} \left(\bsz_{\{1 \mcol s-1\}}, z_{s,n-2} + z \, 2^{n-2} + \bar{z} \, 2^{n-1} \right)
		\\
		&\quad\le
		\frac{1}{2^2} \sum_{z \in \Z_{2}} \sum_{\bar{z} \in \Z_{2}} H_{s,n,\bsgamma} (\bsz_{\{1 \mcol s-1\}}, z_{s,n-2} + z \, 2^{n-2} + \bar{z} \, 2^{n-1} )
		\\
		&\quad=
		\frac{1}{2^2} \sum_{\bar{z} \in \Z_{2^2}} H_{s,n,\bsgamma} (\bsz_{\{1 \mcol s-1\}}, z_{s,n-3} + \tilde{z} \, 2^{n-3} + \bar{z} \, 2^{n-2} )
		,
	\end{align*}
	where again we split up $z_{s,n-2}$ according to \RefAlg{alg:cbc-dbd} such that $\tilde{z}$ is the $(n-2)$-th bit of $z_s$, selected in the previous step of the algorithm. Inductively repeating this argument and combining the result with the estimate in \eqref{eq:target_quantity}, we obtain the inequality
	\begin{align} \label{eq:H-induction-bound}
		H_{s,n,\bsgamma}(\bsz)
		\le
		\frac1{2^{n-1}} \sum_{\bar{z} \in \Z_{2^{n-1}}} H_{s,n,\bsgamma}\left(z_1,\ldots,z_{s-1}, 1 + \bar{z} \, 2 \right)
		,
	\end{align}
	where we used that by \RefAlg{alg:cbc-dbd} we have $z_{s,1}=1$. Then, using Lemma \ref{lemma:bit-average-H} with $v=1$ to equate the right-hand side term in \eqref{eq:H-induction-bound}, we finally obtain
	\begin{align*}
		H_{s,n,\bsgamma}(\bsz) 
		&\le
		\sum_{t=1}^n \sum_{\substack{k=1\\k\equiv1\tpmod{2}}}^{2^t-1}
		\sum_{\emptyset \ne \setu \subseteq \{1 \mcol s-1\}} \gamma_\setu \prod_{j\in\setu} \ln \left(\frac1{\sin^{2}(\pi k z_j / 2^t)}\right)
		\\
		&\quad+
		\sum_{t=1}^n \frac1{2^{t-1}} \sum_{\substack{k=1\\k\equiv1\tpmod{2}}}^{2^t-1}
		\sum_{\setu \subseteq \{1 \mcol s-1\}} \gamma_{\setu\cup\{s\}}
		\ln\left( \frac1{\sin^{2}(\pi k / 2)} \right) \prod_{j\in\setu} \ln\left( \frac1{\sin^{2}(\pi k z_j / 2^t)} \right) \notag
		\\
		&\quad+
		\sum_{t=1}^n \frac{(2^{t-1} - 1) \ln 4}{2^{t-1}} \sum_{\substack{k=1\\k\equiv1\tpmod{2}}}^{2^t-1}
		\sum_{\setu \subseteq \{1 \mcol s-1\}} \gamma_{\setu\cup\{s\}} \prod_{j\in\setu} \ln \left( \frac1{\sin^{2}(\pi k z_j / 2^t)}\right) \notag
		.
	\end{align*}
	Noting that for odd $k$ we have $\ln (\sin^{-2}(\pi k / 2)) = \ln 1 = 0$, this yields
	\begin{align*}
		H_{s,n,\bsgamma}(\bsz) 
		&\le
		\sum_{t=1}^n \sum_{\substack{k=1\\k\equiv1\tpmod{2}}}^{2^t-1}
		\sum_{\emptyset \ne \setu \subseteq \{1 \mcol s-1\}} \gamma_\setu \prod_{j\in\setu} \ln \left(\frac1{\sin^{2}(\pi k z_j / 2^t)}\right)
		\\
		&\quad+
		\ln 4 \sum_{t=1}^n \sum_{\substack{k=1\\k\equiv1\tpmod{2}}}^{2^t-1}
		\sum_{\setu \subseteq \{1 \mcol s-1\}} \gamma_{\setu\cup\{s\}} \prod_{j\in\setu} \ln \left( \frac1{\sin^{2}(\pi k z_j / 2^t)}\right) \notag
		\\
		&=
		H_{s-1,n,\bsgamma}(\bsz_{\{1 \mcol s-1\}}) + \ln 4 \left[ \gamma_{\{s\}} (N-1) + H_{s-1,n,\bsgamma\cup \{s\}}(\bsz_{\{1 \mcol s-1\}}) \right]
		,
	\end{align*}
	where in the last step we used the identity in \eqref{eq:rewrite-H} and separated the contribution of $\setu=\emptyset$ in the second term,
	which gives the claimed result.
\end{proof}

The result in Theorem \ref{theorem:H-induction} relates the quantity $H_{s,n,\bsgamma}(\bsz)$ to the quantities 
$H_{s-1,n,\bsgamma}(\bsz_{\{1 \mcol s-1\}})$ and $H_{s-1,n,\bsgamma \cup \{s\}}(\bsz_{\{1 \mcol s-1\}})$ of dimension $s-1$. Intuitively, since we are analyzing a component-by-component construction, this suggests an inductive argument over the dimension. However, note that the components $z_1,\ldots,z_{s-1}$ have been selected with respect to the function $h_{s-1,n,v,\bsgamma}$, and thus $H_{s-1,n,\bsgamma}$, with weights $\bsgamma$ while for the next step in an inductive argument, we require a statement for $H_{s-1,n,\bsgamma \cup \{s\}}$ with modified weights $\bsgamma \cup \{s\}$.

In the following, we will therefore restrict ourselves to weights which have more structure than general weights, in particular, we will assume product weights $\gamma_{\setu} = \prod_{j \in \setu} \gamma_j$ for a sequence of positive reals $\{\gamma_j\}_{j \ge 1}$. For product weights, we obtain the following estimate for $H_{s,n,\bsgamma}(\bsz)$.

\begin{theorem} \label{theorem:upper_bound_H}
	Let $n, s \in \N$, $N=2^n$, and let $\bsgamma=\{\gamma_\setu\}_{\setu\subseteq \{1 \mcol s\}}$, with $\gamma_{\setu} = \prod_{j \in \setu} \gamma_j$ and positive reals 
	$\{\gamma_j\}_{j \ge 1}$, be a sequence of product weights. Furthermore, let the generating vector $\bsz \in \Z_N^s$ be constructed by \RefAlg{alg:cbc-dbd}. Then for $H_{s,n,\bsgamma}(\bsz)$ the following upper bound holds:
	\begin{equation*}
		H_{s,n,\bsgamma}(\bsz) \notag
		\le
		N \left[ -1 + \prod_{j=1}^s (1 + \gamma_j \ln 4) \right]
		.
	\end{equation*}
\end{theorem}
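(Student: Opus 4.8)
The plan is to induct on the dimension $s$, with \RefThm{theorem:H-induction} supplying the recursive step and the product structure of the weights closing the recursion.

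First I would record the two observations that drive the argument. The quantity $H_{s,n,\bsgamma}(\bsz)$ is \emph{linear} in the weight sequence, being the sum $\sum_{k=1}^{N-1}\sum_{\emptyset\ne\setu\subseteq\{1 \mcol s\}}\gamma_\setu\prod_{j\in\setu}\ln(\sin^{-2}(\pi k z_j/2^n))$. Hence, for product weights, the identity $\gamma_{\setu\cup\{s\}}=\gamma_s\,\gamma_\setu$ (valid for $\setu\subseteq\{1 \mcol s-1\}$) yields
\[
H_{s-1,n,\bsgamma\cup\{s\}}(\bsz_{\{1 \mcol s-1\}}) = \gamma_s\,H_{s-1,n,\bsgamma}(\bsz_{\{1 \mcol s-1\}}).
\]
Substituting this into the estimate of \RefThm{theorem:H-induction} and using $\gamma_{\{s\}}=\gamma_s$ collapses the two ``child'' terms into a single scalar multiple of $H_{s-1,n,\bsgamma}$, giving the clean one-step recursion
\[
H_{s,n,\bsgamma}(\bsz) \le (1+\gamma_s\ln 4)\,H_{s-1,n,\bsgamma}(\bsz_{\{1 \mcol s-1\}}) + \gamma_s\,N\ln 4.
\]
I would also note that, by the component-by-component structure of \RefAlg{alg:cbc-dbd}, the truncation $\bsz_{\{1 \mcol s-1\}}$ of the $s$-dimensional output is precisely the vector the algorithm produces in dimension $s-1$, so the inductive hypothesis applies to $H_{s-1,n,\bsgamma}(\bsz_{\{1 \mcol s-1\}})$.

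For the base case $s=1$ I would argue directly: \RefAlg{alg:cbc-dbd} sets $z_1=1$, so $H_{1,n,\bsgamma}(1)=\gamma_1\sum_{k=1}^{N-1}\ln(\sin^{-2}(\pi k/N))$, and the classical product formula $\prod_{k=1}^{N-1}\sin(\pi k/N)=N/2^{N-1}$ gives $H_{1,n,\bsgamma}(1)=\gamma_1\big((N-1)\ln 4-2\ln N\big)\le N\gamma_1\ln 4 = N[-1+(1+\gamma_1\ln 4)]$. For the inductive step, writing $P_s:=\prod_{j=1}^s(1+\gamma_j\ln 4)$ and assuming $H_{s-1,n,\bsgamma}(\bsz_{\{1 \mcol s-1\}})\le N(P_{s-1}-1)$, the recursion above gives
\[
H_{s,n,\bsgamma}(\bsz) \le (1+\gamma_s\ln 4)\,N(P_{s-1}-1)+\gamma_s N\ln 4 = N\big[(1+\gamma_s\ln 4)P_{s-1}-1\big] = N(P_s-1),
\]
which is the claimed bound.

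Most of the real work has already been done in \RefThm{theorem:H-induction}; the only genuinely new ingredients are the linearity of $H$ in the weights and the explicit one-dimensional evaluation via the sine product formula. I do not expect a serious obstacle, but two bookkeeping points need care: ensuring at each level of the induction that $\bsz_{\{1 \mcol s-1\}}$ is a legitimate \RefAlg{alg:cbc-dbd} output so that \RefThm{theorem:H-induction} may be invoked, and noting that \RefThm{theorem:H-induction} is stated with $\gamma_{\{s\}}N$ in place of the $\gamma_{\{s\}}(N-1)$ that actually appears in its proof, which is harmless since every term involved is nonnegative.
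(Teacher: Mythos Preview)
Your proposal is correct and follows essentially the same route as the paper: the key identity $H_{s-1,n,\bsgamma\cup\{s\}}=\gamma_s H_{s-1,n,\bsgamma}$ for product weights, the resulting one-step recursion $H_{s,n,\bsgamma}(\bsz)\le(1+\gamma_s\ln 4)H_{s-1,n,\bsgamma}(\bsz_{\{1\mcol s-1\}})+\gamma_s N\ln 4$, and the explicit evaluation of $H_{1,n,\bsgamma}(1)$ via the sine product formula all appear in the paper's proof in the same order. The only cosmetic difference is that the paper unrolls the recursion explicitly down to $H_{1,n,\bsgamma}(z_1)$ and then substitutes the exact value $\gamma_1(N-n-1)\ln 4$, whereas you phrase it as a clean induction with the slightly weaker base case $H_{1,n,\bsgamma}(1)\le N\gamma_1\ln 4$; both yield the same final bound.
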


\begin{proof}
	Firstly, note that due to the formulation of Algorithm \ref{alg:cbc-dbd}, the estimate
	\begin{equation} \label{eq:estimate-H-induction-recall}
		H_{r,n,\bsgamma}(\bsz_{\{1 \mcol r\}})
		\le 
		H_{r-1,n,\bsgamma}(\bsz_{\{1 \mcol r-1\}}) + \ln 4 \left[ \gamma_{\{r\}} N + H_{r-1,n,\bsgamma \cup \{r\}}(\bsz_{\{1 \mcol r-1\}}) \right]
	\end{equation}
	as in Theorem \ref{theorem:H-induction} holds for any $2 \le r \le s$. 
	
	For product weights $\gamma_{\setu} = \prod_{j \in \setu} \gamma_j$, we can rewrite $H_{r,n,\bsgamma}(\bsz_{\{1 \mcol r\}})$ as
	\begin{align} \label{eq:H-prod-weights}
		H_{r,n,\bsgamma}(\bsz_{\{1 \mcol r\}}) \notag
		&=
		\sum_{k=1}^{2^n-1} \sum_{\emptyset \ne \setu \subseteq \{1 \mcol r\}} \gamma_\setu \prod_{j\in\setu} \ln \left(\frac1{\sin^{2}(\pi k z_j / 2^n)}\right)
		\\
		&=
		-(N-1) + \sum_{k=1}^{N-1} \prod_{j=1}^r \left( 1 + \gamma_j \ln \left(\frac1{\sin^{2}(\pi k z_j / N)}\right) \right)
	\end{align}
	with $1 \le r \le s$. Furthermore, note that for any $2 \le r \le s$ we obtain
	\begin{align} \label{eq:identity-H_gamma-cup-r}
		H_{r-1,n,\bsgamma \cup \{r\}}(\bsz_{\{1 \mcol r-1\}}) \notag
		&=
		\sum_{k=1}^{2^n-1} \sum_{\emptyset \ne \setu \subseteq \{1 \mcol r-1\}} \gamma_{\setu\cup \{r\}} \prod_{j\in\setu} 
		\ln \left(\frac1{\sin^{2}(\pi k z_j / 2^n)}\right)
		\\
		&=
		\sum_{k=1}^{N-1} \sum_{\emptyset \ne \setu \subseteq \{1 \mcol r-1\}} \gamma_r \prod_{j\in\setu} 
		\gamma_j \ln \left(\frac1{\sin^{2}(\pi k z_j / N)}\right)
		=
		\gamma_r H_{r-1,n,\bsgamma}(\bsz_{\{1 \mcol r-1\}})
		.
	\end{align}
	Using this identity, we can apply the estimate in Theorem \ref{theorem:H-induction} which yields
	\begin{align*}
		H_{s,n,\bsgamma}(\bsz) 
		&\le 
		H_{s-1,n,\bsgamma}(\bsz_{\{1 \mcol s-1\}}) + \ln 4 \left[ \gamma_s N + H_{s-1,n,\bsgamma \cup \{s\}}(\bsz_{\{1 \mcol s-1\}}) \right]
		\\
		&=
		H_{s-1,n,\bsgamma}(\bsz_{\{1 \mcol s-1\}}) + \ln 4 \left[ \gamma_s N + \gamma_s H_{s-1,n,\bsgamma}(\bsz_{\{1 \mcol s-1\}}) \right]
		\\
		&=
		(1 + \gamma_s \ln 4) H_{s-1,n,\bsgamma}(\bsz_{\{1 \mcol s-1\}}) + \gamma_s N \ln 4
		.
	\end{align*}
	Combining \eqref{eq:estimate-H-induction-recall} with identity \eqref{eq:identity-H_gamma-cup-r}, this estimate can be applied recursively for dimensions $s-1$ to $1$ to obtain
	\begin{align} \label{eq:bound_H_H_1}
		&H_{s,n,\bsgamma}(\bsz) \notag
		\le
		(1 + \gamma_s \ln 4) H_{s-1,n,\bsgamma}(\bsz_{\{1 \mcol s-1\}}) + \gamma_s N \ln 4
		\\
		&\le \notag
		(1 + \gamma_s \ln 4) \left[ (1 + \gamma_{s-1} \ln 4) H_{s-2,n,\bsgamma}(\bsz_{\{1 \mcol s-2\}}) + \gamma_{s-1} N \ln 4 \right] + \gamma_s N \ln 4
		\\
		&= \notag
		H_{s-2,n,\bsgamma}(\bsz_{\{1 \mcol s-2\}}) \prod_{j=s-1}^s (1 + \gamma_j \ln 4) + N \left[ -1 + \prod_{j=s-1}^s (1 + \gamma_j \ln 4) \right] 
		\\
		&\le
		H_{1,n,\bsgamma}(z_1) \prod_{j=2}^s (1 + \gamma_j \ln 4) + N \left[ -1 + \prod_{j=2}^s (1 + \gamma_j \ln 4) \right]
		.
	\end{align}
	Comparing to \eqref{eq:H-prod-weights}, we see that $H_{1,n,\bsgamma}(z_1)$ equals
	\begin{align*}
		H_{1,n,\bsgamma}(z_1)
		&=
		-(N-1) + \sum_{k=1}^{N-1} \left( 1 + \gamma_1 \ln \left(\frac1{\sin^{2}(\pi k z_1 / N)}\right) \right)
		=
		-2 \gamma_1 \sum_{k=1}^{N-1} \ln \left( \sin \left( \frac{\pi k}{N} \right)\right)
		\\
		&=
		-2 \gamma_1 \ln \left(\prod_{k=1}^{N-1} \sin \left( \frac{\pi k}{N} \right)\right)
		=
		-2 \gamma_1 \ln \left( \frac{N}{2^{N-1}} \right)
		=
		-2 \gamma_1 \left[ \ln N - (N-1) \ln 2 \right]
		\\
		&=
		2 \gamma_1 \left[ (N-1) \ln 2 - n \ln 2\right]
		=
		\gamma_1 (N - n - 1) \ln 4
		,
	\end{align*}
	where we used that $z_1=1$ and the identity
	\begin{equation} \label{eq:prod_sin}
		\prod_{k=1}^{N-1} \left(2\sin\left(\frac{\pi k}{N}\right)\right) 
		= 
		N
		,
	\end{equation}
	see, e.g., \cite[Proposition 25]{Kor63}. Combining the obtained expression with \eqref{eq:bound_H_H_1} finally gives
	\begin{equation*}
		H_{s,n,\bsgamma}(\bsz) \notag
		\le
		N \gamma_1 \ln 4 \prod_{j=2}^s (1 + \gamma_j \ln 4) + N \left[ -1 + \prod_{j=2}^s (1 + \gamma_j \ln 4) \right]
		=
		N \left[ -1 + \prod_{j=1}^s (1 + \gamma_j \ln 4) \right],
	\end{equation*}
	which is the claim.
\end{proof}

We are now able to show the main result regarding the component-by-component digit-by-digit construction. 

\begin{theorem}\label{thm:optcoeff-dbd}
	Let $N=2^n$, with $n \in \N$, and let $\bsgamma=\{\gamma_\setu\}_{\setu\subseteq \{1 \mcol s\}}$, with $\gamma_{\setu} = \prod_{j \in \setu} \gamma_j$ and positive reals 
	$\{\gamma_j\}_{j \ge 1}$, be a sequence of product weights. Furthermore, denote by $\bsz = (z_1, \ldots, z_s)$ the corresponding generating vector constructed by \RefAlg{alg:cbc-dbd}. Then the following estimate holds: 
	\begin{equation} \label{eq:optcoeff-dbd}
		T(N,\bsz)
		\le
		\frac1N \left[\prod_{j=1}^s \left( 1 + \gamma_j (\ln 4 + 2(1+\ln N)) \right) + 2 (1+\ln N) \prod_{j=1}^s \left( 1 + \gamma_j (2(1 + 2\ln N)) \right) \right]
	\end{equation}
	and the $z_1, \ldots, z_s$ are optimal coefficients modulo $N$. Moreover, if the weights satisfy	 	
	\begin{align*}
		\sum_{j \ge 1} \gamma_j
		&<
		\infty 
		,
	\end{align*}
	then $T(N,\bsz)$ can be bounded independently of the dimension.
\end{theorem}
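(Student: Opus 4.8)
The plan is to combine \RefThm{thm:T_target_CBCDBD} with the bound on $H_{s,n,\bsgamma}(\bsz)$ from \RefThm{theorem:upper_bound_H}, and then exploit the product structure of the weights to collapse the subset sums in \RefThm{thm:T_target_CBCDBD} into products over the coordinates. Recall the elementary identity $\sum_{\emptyset\ne\setu\subseteq\{1\mcol s\}}\gamma_\setu\,c^{|\setu|}=-1+\prod_{j=1}^s(1+\gamma_j c)$, valid for product weights $\gamma_\setu=\prod_{j\in\setu}\gamma_j$ and any $c\ge 0$. Applying it to the first term of \RefThm{thm:T_target_CBCDBD} with $c=\ln 4+2(1+\ln N)$, and to the second term with $c=\ln 4$, while inserting $\tfrac1N H_{s,n,\bsgamma}(\bsz)\le -1+\prod_{j=1}^s(1+\gamma_j\ln 4)$ from \RefThm{theorem:upper_bound_H} for the last term, one observes that the second term and the $H$-term cancel exactly, because $-\sum_{\emptyset\ne\setu}\gamma_\setu(\ln 4)^{|\setu|}=1-\prod_{j=1}^s(1+\gamma_j\ln 4)$ is precisely the negative of that upper bound on $\tfrac1N H_{s,n,\bsgamma}(\bsz)$.

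The only remaining term of \RefThm{thm:T_target_CBCDBD} is $\sum_{\emptyset\ne\setu}\frac{\gamma_\setu}{N}\,2|\setu|\,(1+2\ln N)^{|\setu|}(1+\ln N)$, where the factor $|\setu|$ obstructs a direct application of the product identity. Here I would use the crude estimate $|\setu|\le 2^{|\setu|}$, which holds for all $|\setu|\ge 0$, to write $|\setu|\,(1+2\ln N)^{|\setu|}\le\bigl(2(1+2\ln N)\bigr)^{|\setu|}$, so that the product identity gives $\sum_{\emptyset\ne\setu}\gamma_\setu\,|\setu|\,(1+2\ln N)^{|\setu|}\le -1+\prod_{j=1}^s\bigl(1+\gamma_j\,2(1+2\ln N)\bigr)$. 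Collecting the surviving contributions and discarding the two nonpositive $-1/N$ remainders then yields exactly the claimed bound \eqref{eq:optcoeff-dbd}.

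For the optimal-coefficients statement, fix $s$ and the weights; each factor $1+\gamma_j(\ln 4+2(1+\ln N))$ and $1+\gamma_j\,2(1+2\ln N)$ is $O(\ln N)$, so the right-hand side of \eqref{eq:optcoeff-dbd} is at most $\tfrac{C(\bsgamma)}{N}(1+\ln N)^{s+1}$. Since $(1+\ln N)^{s+1}\le C_{s,\delta}\,N^{\delta}$ for every $\delta>0$, this gives $T(N,\bsz)\le C(\bsgamma,\delta)\,N^{-1+\delta}$ for all $N=2^n$ with $n\in\N$, which is exactly the defining property of optimal coefficients modulo $N$ in \RefDef{def:opt_coeff}.

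For the dimension-independent bound, I would use $1+x\le\rme^{x}$ to estimate $\prod_{j=1}^s(1+\gamma_j c)\le\exp\!\bigl(c\textstyle\sum_{j=1}^s\gamma_j\bigr)\le\exp(c\,\Gamma)$, with $\Gamma:=\sum_{j\ge 1}\gamma_j<\infty$, applied once with $c=\ln 4+2(1+\ln N)$ and once with $c=2(1+2\ln N)$; substituting these into \eqref{eq:optcoeff-dbd} bounds $T(N,\bsz)$ by a quantity depending only on $N$ and $\Gamma$, hence independent of $s$. (Alternatively one could cite \RefLem{lem:logN}, whose summability hypothesis \eqref{eq:summability-gamma-tilde} reduces to $\sum_j\gamma_j<\infty$ for product weights.) There is no deep obstacle; the only care needed is bookkeeping the cancellation of the $\ln 4$-term against the $H$-term and checking that the discarded $-1/N$ remainders are indeed nonpositive, so that dropping them preserves the inequality.
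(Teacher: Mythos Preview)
Your proposal is correct and follows essentially the same route as the paper: combine \RefThm{thm:T_target_CBCDBD} with \RefThm{theorem:upper_bound_H}, use the product identity $\sum_{\emptyset\ne\setu}\gamma_\setu\,c^{|\setu|}=-1+\prod_j(1+\gamma_j c)$ for the subset sums, observe the exact cancellation between the $(\ln 4)^{|\setu|}$ term and the $H$-bound, apply $|\setu|\le 2^{|\setu|}$ to the remaining term, and drop the nonpositive remainders. The only minor deviation is in the dimension-independence step: the paper first collapses both products into $(3+2\ln N)\prod_{j}(1+8\gamma_j\ln N)$ and then invokes \cite[Lemma~3]{HN03} to get $\calO(N^{\delta})$ with a dimension-free implied constant, whereas your primary suggestion $1+x\le\rme^{x}$ gives a bound of the form $N^{-1+c\Gamma}$ that is dimension-independent but with a fixed (not arbitrarily small) exponent loss; your parenthetical alternative via \RefLem{lem:logN} recovers the paper's stronger conclusion.
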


\begin{proof}
	Combining the bound on $T(N,\bsz)$ in Theorem \ref{thm:T_target_CBCDBD} for product weights, and inserting for $\bsz$ the generating vector obtained by \RefAlg{alg:cbc-dbd}, with the estimate on $H_{s,n,\bsgamma}(\bsz)$ from Theorem \ref{theorem:upper_bound_H} gives 
	\begin{align} \label{eq:T_bound_main}
		T(N,\bsz) \notag
		&\le
		\frac1N \left[-1 + \prod_{j=1}^{s} \left( 1 + \gamma_j (\ln 4 + 2(1+\ln N)) \right) \right]
		- \prod_{j=1}^s (1 + \gamma_j \ln 4) + 1
		\\
		&\quad+ \notag
		\frac{2 (1+\ln N)}{N} \sum_{\emptyset\neq \setu\subseteq \{1 \mcol s\}} \abs{\setu} \prod_{j \in \setu} \gamma_j (1 + 2 \ln N) + \frac1N H_{s,n,\bsgamma}(\bsz)
		\\
		&\le
		\frac1N \prod_{j=1}^{s} \left( 1 + \gamma_j (\ln 4 + 2(1+\ln N)) \right)
		+ 
		\frac{2 (1+\ln N)}{N} \prod_{j=1}^s \left( 1 + \gamma_j (2(1 + 2 \ln N)) \right)
		,
	\end{align}
	where we used that $\abs{\setu} \le 2^{\abs{\setu}}$.
	Note that the condition on $\bsz$ in Theorem \ref{thm:T_target_CBCDBD} is fulfilled since by the formulation of Algorithm \ref{alg:cbc-dbd} all $z_j$ are odd such that
	$\gcd(z_j,N) = \gcd(z_j,2^n) = 1$ is satisfied. This estimate yields the result in \eqref{eq:optcoeff-dbd} and we can deduce that 
	\begin{equation*}
		T(N,\bsz) 
		\le 
		\frac{C_s}{N} (\ln N)^{s+1} \prod_{j=1}^s \left( 1 + \gamma_j \right)
	\end{equation*}
	for some constant $C_s$. As any power of $N$ grows asymptotically faster than $\ln N$, we see that \RefAlg{alg:cbc-dbd} yields optimal coefficients modulo $N$
	in the sense of Definition \ref{def:opt_coeff} for the values $N=2^n$ with $n \in \N$. Additionally, we obtain from \eqref{eq:T_bound_main} that
	\begin{align*}
		N \, T(N,\bsz) 
		&\le
		\prod_{j=1}^{s} \left( 1 + \gamma_j (8 \ln N) \right) + 2 (1+\ln N) \prod_{j=1}^s \left( 1 + \gamma_j (8 \ln N) \right)
		\\
		&=
		(3 + 2 \ln N) \!\prod_{j=1}^s \!\left( 1 + \gamma_j (8 \ln N) \right)
		\le 
		\widetilde{C}(\delta/2) N^{\delta/2} \!\prod_{j=1}^s \!\left( 1 + \gamma_j (8 \ln N) \right)
	\end{align*}	
	for an arbitrary $\delta>0$, where $\widetilde{C}(\delta/2)$ is a constant depending only on $\delta$. We can then estimate
	\begin{equation*}
		\prod_{j=1}^s \!\left( 1 + \gamma_j (8 \ln N) \right)
		\le
		\prod_{j=1}^\infty \!\left( 1 + \gamma_j (8 \ln N) \right)
	\end{equation*}
	and, due to the imposed condition on the weights, employ \cite[Lemma~3]{HN03} to see that this last expression is of order $\calO (N^{\delta/2})$ with implied 
	constant independent of the dimension. This yields the claimed result.
\end{proof}

\begin{corollary} \label{cor:main-result-dbd}
	Let $N=2^n$, with $n \in \N$, and let $\bsgamma=\{\gamma_\setu\}_{\setu\subseteq \{1 \mcol s\}}$ be a sequence of product weights, i.e., 
	$\gamma_{\setu} = \prod_{j \in \setu} \gamma_j$,  with positive reals $\{\gamma_j\}_{j \ge 1}$ satisfying
	\begin{align*}
		\sum_{j \ge 1} \gamma_j
		&<
		\infty
		.
	\end{align*}
	Denote by $\bsz=(z_1,\ldots,z_s)$ the generating vector constructed by \RefAlg{alg:cbc-dbd}. 
	Then, for any $\delta>0$ and each $\alpha>1$, the worst-case error $e_{N,s,\alpha,\bsgamma^{\alpha}}(\bsz)$ satisfies
	\begin{equation*}
		e_{N,s,\alpha,\bsgamma^{\alpha}}(\bsz)
		\le
		\frac{1}{N^{\alpha}} \left( \prod_{j=1}^{s} \left( 1 + \gamma_j^\alpha (4 \zeta(\alpha)) \right) + C(\bsgamma,\delta) N^{\alpha \delta} \right)
	\end{equation*}  
	with weight sequence $\bsgamma^{\alpha}=\{\gamma_{\setu}^{\alpha}\}_{\setu \subseteq \{1 \mcol s\}}$ and a positive constant $C(\bsgamma,\delta)$ independent of $s$ and $N$.
\end{corollary}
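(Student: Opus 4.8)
The plan is to derive the corollary directly from \RefProp{prop:trunc_error} and \RefThm{thm:optcoeff-dbd}, exploiting the elementary observation that replacing the smoothness~$1$ by a general smoothness $\alpha>1$ in the relevant quantities amounts to raising the reciprocal decay function to the power~$\alpha$. Concretely, from~\eqref{eq:decay_func} one has $r_{\alpha,\bsgamma^\alpha}(\bsm)=(r_{1,\bsgamma}(\bsm))^\alpha$ for every $\bsm\in\Z^s$, so the generating vector produced by \RefAlg{alg:cbc-dbd} (which never sees $\alpha$) will automatically control the smoothness-$\alpha$ error through the smoothness-$1$ quantity $T(N,\bsz)$ that the algorithm is designed for. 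This is precisely the mechanism behind the $\alpha$-independence emphasised in the introduction.

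First I would record that the components $z_1,\ldots,z_s$ returned by \RefAlg{alg:cbc-dbd} are all odd, so $\gcd(z_j,N)=\gcd(z_j,2^n)=1$ and \RefProp{prop:trunc_error} applies with the weight sequence $\bsgamma^\alpha$; for product weights $\gamma_\setu=\prod_{j\in\setu}\gamma_j$ this gives
\begin{equation*}
    e_{N,s,\alpha,\bsgamma^\alpha}(\bsz)
    \le
    \frac{1}{N^\alpha}\left(-1+\prod_{j=1}^s\bigl(1+\gamma_j^\alpha\,(4\zeta(\alpha))\bigr)\right)
    +
    \sum_{\bszero\ne\bsm\in M_{N,s}}\frac{\delta_N(\bsm\cdot\bsz)}{(r_{1,\bsgamma}(\bsm))^\alpha}.
\end{equation*}
Next I would bound the remaining truncated sum: since $\delta_N$ only takes the values $0$ and $1$, the inequality $\sum_i x_i^\alpha\le(\sum_i x_i)^\alpha$ for $\alpha\ge1$, $x_i\ge0$ (already used in the proof of \RefThm{thm:ex_res}) shows it is at most $(T(N,\bsz))^\alpha$. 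Then I would invoke \RefThm{thm:optcoeff-dbd}: under $\sum_{j\ge1}\gamma_j<\infty$, for each $\delta>0$ there is a constant $c(\bsgamma,\delta)>0$, independent of $s$ and $N$, with $T(N,\bsz)\le c(\bsgamma,\delta)\,N^{-1+\delta}$, hence $(T(N,\bsz))^\alpha\le c(\bsgamma,\delta)^\alpha\,N^{-\alpha}\,N^{\alpha\delta}$. Substituting this, bounding the first bracket above by $\prod_{j=1}^s(1+\gamma_j^\alpha\,(4\zeta(\alpha)))$, and renaming $C(\bsgamma,\delta):=c(\bsgamma,\delta)^\alpha$ yields the stated inequality.

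The only point that needs a small extra argument — and the closest thing to an obstacle — is the claim that $C(\bsgamma,\delta)$ and the Euler product are genuinely independent of~$s$. The dimension-independence of $c(\bsgamma,\delta)$ is already built into \RefThm{thm:optcoeff-dbd}; for the product $\prod_{j=1}^s(1+\gamma_j^\alpha\,(4\zeta(\alpha)))$ I would note that $\sum_{j\ge1}\gamma_j<\infty$ forces $\gamma_j\to0$, so $\gamma_j^\alpha\le\gamma_j$ for all sufficiently large~$j$ because $\alpha>1$, whence $\sum_{j\ge1}\gamma_j^\alpha<\infty$ and the partial products are bounded by $\prod_{j\ge1}(1+\gamma_j^\alpha\,(4\zeta(\alpha)))<\infty$ uniformly in~$s$. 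Everything else is a direct chaining of the cited results, with no new estimates required.
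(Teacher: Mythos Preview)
Your proof is correct and follows essentially the same route as the paper: apply \RefProp{prop:trunc_error} with weights $\bsgamma^\alpha$, use $r_{\alpha,\bsgamma^\alpha}=(r_{1,\bsgamma})^\alpha$ together with $\sum_i x_i^\alpha\le(\sum_i x_i)^\alpha$ to bound the truncated piece by $(T(N,\bsz))^\alpha$, and then invoke \RefThm{thm:optcoeff-dbd}. The paper's own argument is just the one-line remark ``analogously to the proof of \RefThm{thm:ex_res} using \RefThm{thm:optcoeff-dbd}'', which is exactly what you have unpacked. Your closing paragraph on the uniform boundedness of $\prod_{j=1}^s(1+\gamma_j^\alpha\,4\zeta(\alpha))$ is a correct extra observation but not required for the corollary as stated, since that product appears explicitly in the bound and only $C(\bsgamma,\delta)$ is asserted to be independent of $s$ and $N$.
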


\begin{proof}
	The proof works analogously to that of \RefThm{thm:ex_res} by using the estimate obtained in Theorem \ref{thm:optcoeff-dbd}, namely
	\begin{equation*}
		T(N,\bsz)
		=
		\sum_{\bszero \ne \bsm \in M_{N,s}} \frac{\delta_N(\bsm \cdot \bsz)}{r_{1,\bsgamma}(\bsm)}
		\le
		\bar{C}(\bsgamma,\delta) N^{-1+\delta}
		,
	\end{equation*}
	with a positive constant $\bar{C}(\bsgamma,\delta)$ independent of $s$ and $N$.
\end{proof}

While for the previous results we restricted ourselves to product weights, we expect that analogous statements can be derived for general weights. 
In fact the only point where the current theory does not seem to work for general weights is the proof of Theorem \ref{theorem:upper_bound_H}.
A generalization would be of great interest and will be the subject of future research.

\begin{remark}
	Note that the proof technique used for showing the result in Corollary \ref{cor:main-result-dbd} is such that we do not need any knowledge about the smoothness 
	parameter $\alpha$ when running Algorithm \ref{alg:cbc-dbd}. We stress that this is not the case for all formulations of component-by-component algorithms 
	in the literature. On the other hand, similar ideas are to be found in \cite[Theorem 5.5]{N92b} and \cite[Lemma 4.20]{LP14}. However, the results in \cite{N92b} and \cite{LP14} 
	are formulated for different algorithms than the one considered in the present paper. A similar comment applies to Corollary \ref{cor:main-result-cbc} below.
\end{remark}

\subsection{The weighted component-by-component algorithm by Korobov} \label{sec:cbc}

In this section, we study a component-by-component (CBC) algorithm that is motivated by earlier work of Korobov, see, e.g., \cite{Kor63} for a reference.
In contrast to Section \ref{sec:dbd}, we will assume that $N$ is prime throughout the present section. The following theorem is a key ingredient in our main result 
on the CBC algorithm, and is analogous to Theorem~\ref{thm:T_target_CBCDBD}.

\begin{theorem}\label{thm:T_target_CBC}
	Let $N$ be prime and let $\bsgamma=\{\gamma_\setu\}_{\setu \subseteq \{1 \mcol s\}}$ be a sequence of positive weights. 
	Furthermore, let $\bsz = (z_1, \ldots, z_s)\in\{1,\ldots,N-1\}^s$. Then the following estimate holds:
	\begin{align} \label{eq:T_target_CBC}
		T(N,\bsz) 
		&\le
		\sum_{\emptyset\neq\setu\subseteq\{1 \mcol s\}}\frac{\gamma_\setu}{N} \sum_{k=1}^{N-1} K_\setu \left(\left\{ \frac{k \bsz_\setu}{N} \right\}\right) 
		+ \sum_{\emptyset\neq\setu\subseteq\{1 \mcol s\}}\frac{\gamma_\setu}{N} (2(1 + \ln N))^{\abs{\setu}} \nonumber \\
		&\quad+
		\sum_{\emptyset\neq\setu\subseteq\{1 \mcol s\}}\frac{\gamma_\setu}{N} \, 2 {\abs{\setu}} \, (1+ 2\ln N)^{\abs{\setu}} \, (1+\ln N)
		,
	\end{align}
	where, for a subset $\emptyset\neq\setu\subseteq \{1 \mcol s\}$, we define the function $K_{\setu}: \R^{|\setu|} \to \R$ via
	\begin{equation} \label{eq:def_K_u}
		K_\setu(\bsx_\setu)
		:=
		\prod_{j\in\setu} (-2\ln (2 \sin \pi x_j))
		=
		\sum_{\bsm_\setu \in (\Z\setminus \{0\})^{\abs{\setu}}} \frac{\rme^{2 \pi \icomp \bsm_\setu \cdot \bsx_\setu}}{\prod_{j\in\setu} \abs{m_j}} .
	\end{equation}
\end{theorem}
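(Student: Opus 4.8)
The plan is to follow the scheme of the proof of Theorem~\ref{thm:T_target_CBCDBD}, but with the function $-2\ln(2\sin(\pi x))$ (the case $a=2$ of Lemma~\ref{lem:ln-sin}) playing the role that $\ln(\sin^{-2}(\pi x))$ played there. A pleasant simplification is that $-2\ln(2\sin(\pi x))$ has vanishing constant Fourier coefficient, since $\ln 4 - 2\ln 2 = 0$, so there is no need to insert artificial $m_j=0$ terms or to introduce auxiliary coefficients $b(m)$. I would start from the subset expansion~\eqref{eq:TN-subsets},
\[
  T(N,\bsz)=\sum_{\emptyset\ne\setu\subseteq\{1 \mcol s\}}\frac{\gamma_\setu}{N}\sum_{k=0}^{N-1}\prod_{j\in\setu}\sum_{m\in M_{N,1}^\ast}\frac{\rme^{2\pi\icomp k m z_j/N}}{\abs{m}},
\]
peel off the $k=0$ term, whose inner product equals $\bigl(2\sum_{m=1}^{N-1}\tfrac1m\bigr)^{\abs{\setu}}\le(2(1+\ln N))^{\abs{\setu}}$ and hence already supplies the second summand of~\eqref{eq:T_target_CBC}, and note that for $1\le k\le N-1$ the inner product equals $\prod_{j\in\setu}\vartheta_N(\{kz_j/N\})$ by periodicity of the exponentials and the definition~\eqref{eq:vartheta} of $\vartheta_N$.

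For $1\le k\le N-1$, write $x_j=x_j(k):=\{z_jk/N\}$, which lies in $\{1/N,\dots,(N-1)/N\}$ because $N$ prime and $z_j\in\{1,\dots,N-1\}$ force $\gcd(z_j,N)=1$. Recasting Lemma~\ref{lemma:expr_target_function} by means of the identity $-2\ln(2\sin(\pi x))=-\ln 4-2\ln\sin(\pi x)$ yields $\vartheta_N(x_j)=-2\ln(2\sin(\pi x_j))-\tau_j(k)/(N\norm{z_jk/N})$ with $\abs{\tau_j(k)}\le1$. I would then apply Lemma~\ref{lemma:diff_prod} with
\[
  u_j(k):=-2\ln(2\sin(\pi x_j)),\quad v_j(k):=\vartheta_N(x_j),\quad r_j(k):=\frac{\tau_j(k)}{N\norm{z_jk/N}},\quad \bar u_j:=2\ln N,
\]
so that hypothesis~(a) is precisely the displayed identity, while (b)--(c) follow from $\sin(\pi/N)\le\sin(\pi x_j)\le1$ together with the elementary estimate $\sin(\pi/N)\ge\pi/(2N)$: these give $-\ln 4\le u_j(k)\le2\ln N$, hence $\abs{u_j(k)}\le2\ln N=\bar u_j$ with $\bar u_j\ge1$ for every $N\ge2$, and $\norm{z_jk/N}\ge1/N$ gives $\abs{r_j(k)}\le1$. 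Lemma~\ref{lemma:diff_prod} then replaces $\prod_{j\in\setu}v_j(k)$ by $\prod_{j\in\setu}u_j(k)$ at the cost of an error of modulus at most $\bigl(\prod_{j\in\setu}(\bar u_j+\abs{r_j(k)})\bigr)\sum_{j\in\setu}\abs{r_j(k)}\le(1+2\ln N)^{\abs{\setu}}\sum_{j\in\setu}\tfrac1{N\norm{z_jk/N}}$.

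Summing over $k\in\{1,\dots,N-1\}$, the main term becomes $\sum_{k=1}^{N-1}\prod_{j\in\setu}\bigl(-2\ln(2\sin(\pi x_j(k)))\bigr)=\sum_{k=1}^{N-1}K_\setu(\{k\bsz_\setu/N\})$ by the definition~\eqref{eq:def_K_u} of $K_\setu$, whose product/Fourier-series equality is just the $\abs{\setu}$-fold product of Lemma~\ref{lem:ln-sin} with $a=2$; this is the first summand of~\eqref{eq:T_target_CBC}. For the error term I would use the bound $\sum_{k=1}^{N-1}\tfrac1{N\norm{z_jk/N}}\le2(1+\ln N)$, which holds because $\gcd(z_j,N)=1$ (\cite[Corollary of Proposition~4]{Kor63}), to obtain a contribution of at most $2\abs{\setu}(1+2\ln N)^{\abs{\setu}}(1+\ln N)$ for each $\setu$ --- the third summand. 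Multiplying the three contributions by $\gamma_\setu/N$ and summing over $\emptyset\ne\setu\subseteq\{1 \mcol s\}$ produces exactly~\eqref{eq:T_target_CBC}.

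The argument is largely bookkeeping once the lemmas of Section~\ref{sec:constr} are available; the one step needing genuine care is the uniform verification of hypotheses~(b)--(c) of Lemma~\ref{lemma:diff_prod}. In contrast to Theorem~\ref{thm:T_target_CBCDBD}, where $u_j=\ln(\sin^{-2}(\pi z_jk/N))\ge0$ and only the singular behaviour near $x\to0$ had to be controlled, here $u_j=-2\ln(2\sin(\pi x_j))$ changes sign, so one has to bound it both from above near the singularity (using $\sin(\pi/N)\ge\pi/(2N)$) and from below near $x_j=1/2$ (where $u_j=-\ln 4$), and check that the resulting choice $\bar u_j=2\ln N$ already satisfies $\bar u_j\ge1$ down to $N=2$. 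This is elementary, but it is the place where a careless constant would break the proof.
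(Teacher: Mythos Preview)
Your proposal is correct and follows essentially the same approach as the paper: both start from~\eqref{eq:TN-subsets}, separate the $k=0$ term to produce the $(2(1+\ln N))^{|\setu|}$ contribution, and for $k\ge 1$ apply Lemma~\ref{lemma:diff_prod} with $u_j=-2\ln(2\sin(\pi z_jk/N))$, $v_j=\vartheta_N(\{z_jk/N\})$, $r_j=\tau_j/(N\|z_jk/N\|)$, $\bar u_j=2\ln N$, then bound the remainder using $\sum_{k=1}^{N-1}(N\|z_jk/N\|)^{-1}\le 2(1+\ln N)$. Your verification of hypothesis~(b) is in fact slightly cleaner than the paper's, since you explicitly handle the two-sided range $-\ln 4\le u_j\le 2\ln N$ rather than writing $|u_j|\le|-2\ln(2\sin(\pi/N))|$, which as stated tacitly assumes the maximum of $|u_j|$ is attained at the boundary $x_j=1/N$.
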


\begin{proof}
In a similar fashion as in the proof of Theorem~\ref{thm:T_target_CBCDBD}, we obtain from \eqref{eq:TN-subsets}
\begin{align}
	\notag
	&\hspace{-4mm} T(N, \bsz)
	=
	\sum_{\bszero \ne \bsm \in M_{N,s}} \frac{\delta_N(\bsm\cdot\bsz)}{r_{1,\bsgamma}(\bsm)}
	=
	\sum_{\emptyset \ne \setu \subseteq \{1 \mcol s\}} \frac{\gamma_\setu}{N} \sum_{k=0}^{N-1}
	\left[
	\sum_{\bsm_\setu \in M_{N,|\setu|}^\ast} \frac{\rme^{2\pi\icomp k\,\bsz_\setu\cdot\bsm_\setu/N}}{\prod_{j\in\setu} |m_j|}
	\right]
	\\\notag
	&=
	\sum_{\emptyset \ne \setu \subseteq \{1 \mcol s\}} \frac{\gamma_\setu}{N} \left(
	\sum_{\bsm_\setu \in M_{N,|\setu|}^\ast} \frac{1}{\prod_{j\in\setu} |m_j|}
	+
	\sum_{k=1}^{N-1} \prod_{j\in\setu} \left(\sum_{m \in M_{N,1}^\ast} \frac{\rme^{2\pi\icomp k m z_j / N}}{|m|} \right)
	\right)
	\\ \notag
	&=
	\sum_{\emptyset \ne \setu \subseteq \{1 \mcol s\}} \frac{\gamma_\setu}{N} \left(
	\sum_{\bsm_\setu \in M_{N,|\setu|}^\ast} \frac{1}{\prod_{j\in\setu} |m_j|}
	+
	\sum_{k=1}^{N-1}
	\left[ \prod_{j\in\setu} v_j(k) - \prod_{j\in\setu} u_j(k) + \prod_{j\in\setu} u_j(k) \right]
	\right)
	\\ \notag
	&=
	\sum_{\emptyset \ne \setu \subseteq \{1 \mcol s\}} \frac{\gamma_\setu}{N}
	\sum_{\bsm_\setu \in M_{N,|\setu|}^\ast} \frac{1}{\prod_{j\in\setu} |m_j|}
	+
	\sum_{\emptyset \ne \setu \subseteq \{1 \mcol s\}} \frac{\gamma_\setu}{N} \sum_{k=1}^{N-1} \prod_{j\in\setu} u_j(k)
	\\
	\notag
	&\quad+
	\sum_{\emptyset \ne \setu \subseteq \{1 \mcol s\}} \frac{\gamma_\setu}{N} \sum_{k=1}^{N-1}
	\theta_\setu(k) \left(\prod_{j\in\setu} \left(\bar{u}_j(k) + |r_j(k)|\right)\right) \sum_{j\in\setu} |r_j(k)|
	\\
	\label{eq:T-bound-CBC}
	&=
	\sum_{\emptyset\neq\setu\subseteq\{1 \mcol s\}}\frac{\gamma_\setu}{N} \sum_{k=1}^{N-1} K_\setu \left(\left\{ \frac{k \bsz_\setu}{N} \right\}\right)
	+
	\sum_{\emptyset \ne \setu \subseteq \{1 \mcol s\}} \frac{\gamma_\setu}{N}
	\sum_{\bsm_\setu \in M_{N,|\setu|}^\ast} \frac{1}{\prod_{j\in\setu} |m_j|}
	\\
	\notag
	&\quad+
	\sum_{\emptyset \ne \setu \subseteq \{1 \mcol s\}} \frac{\gamma_\setu}{N} \sum_{k=1}^{N-1}
	\theta_\setu(k) \left(\prod_{j\in\setu} \left(\bar{u}_j(k) + |r_j(k)|\right)\right) \sum_{j\in\setu} |r_j(k)|
	,
\end{align}
where we used Lemma \ref{lemma:diff_prod} with
\begin{align*}
  u_j=u_j(k) &:= -2\ln(2 \sin (\pi z_j k / N)),
  &
  \bar{u}_j=\bar{u}_j(k) &:= 2 \ln N,
  \\
  v_j = v_j(k) &:= \sum_{m \in M_{N,1}^\ast} \frac{\rme^{2\pi\icomp k m z_j / N}}{|m|},
  &
  r_j = r_j(k) &:= \frac{\tau_j(k)}{N \, \|z_j k / N\|}
  ,
\end{align*}
and all $|\theta_\setu(k)| \le 1$ and $|\tau_j(k)| \le 1$. Due to Lemmas \ref{lem:ln-sin} and \ref{lemma:expr_target_function}, Condition (a) of Lemma \ref{lemma:diff_prod} holds, and Conditions (b) and (c) in the same lemma are satisfied since, as in the proof of Theorem~\ref{thm:T_target_CBCDBD}, we have
\begin{equation*}
	|u_j| \le \abs{-2 \ln\left(2\sin \frac{\pi}{N}\right)} 
	= 
	\abs{-\ln 4 + \ln\left(\sin^{-2} \left( \frac{\pi}{N} \right) \right)} 
	\le 
	\abs{\ln\left(\sin^{-2} \left( \frac{\pi}{N} \right) \right)} 
	\le 
	2 \ln N 
	= 
	\bar{u}_j \ge 1
\end{equation*}
for $j=1,\ldots,s$, as long as $N \ge 2$. Similar to the proof of Theorem \ref{thm:T_target_CBCDBD}, we see that the second sum in \eqref{eq:T-bound-CBC} can be bounded as
\begin{equation*}
	\sum_{\emptyset \ne \setu \subseteq \{1 \mcol s\}} \frac{\gamma_\setu}{N} \sum_{\bsm_\setu \in M_{N,|\setu|}^\ast} \frac{1}{\prod_{j\in\setu} |m_j|}
	\le
	\sum_{\emptyset\neq\setu\subseteq\{1 \mcol s\}} \frac{\gamma_\setu}{N} (2(1 + \ln N))^{\abs{\setu}} .
\end{equation*}
Finally, the third sum in~\eqref{eq:T-bound-CBC} can be bounded independently of the choice of $\bsz$,
in exactly the same way as in the proof of Theorem~\ref{thm:T_target_CBCDBD}. Using the estimates for the second sum and the third sum in
\eqref{eq:T-bound-CBC} yields \eqref{eq:T_target_CBC}.
\end{proof}

Based on Theorem \ref{thm:T_target_CBC}, it is obvious that is desirable to find generating vectors such that 
\begin{equation*}
	\sum_{\emptyset\neq\setu\subseteq\{1 \mcol s\}}\frac{\gamma_\setu}{N} \sum_{k=1}^{N-1} K_\setu \left(\left\{ \frac{k \bsz_\setu}{N} \right\}\right) 
\end{equation*}
is small. This motivates the definition of the following quality function for the component-by-component algorithm.

\begin{definition}[Quality function] \label{def:V_N,s}
	For a generating vector $\bsz=(z_1,\ldots,z_s) \in \Z^s$, a number $N \in \N$ and positive weights 
	$\bsgamma=\{\gamma_\setu\}_{\setu\subseteq \{1 \mcol s\}}$ we define the quality function $V_{N,s,\bsgamma}: \Z^s \to \R$ as
	\begin{equation*}
		V_{N,s,\bsgamma}(\bsz) 
		:=
		\sum_{\emptyset\neq\setu\subseteq\{1 \mcol s\}}\gamma_\setu \sum_{k=1}^{N-1} K_\setu \left(\left\{ \frac{k \bsz_\setu}{N} \right\}\right) .
	\end{equation*}  
\end{definition}

Using this quality function, we formulate the component-by-component construction.

\begin{algorithm}[H] 
	\caption{Component-by-component construction}	
	\label{alg:cbc}
	\vspace{5pt}
	\textbf{Input:} Prime number $N$, dimension $s$ and positive weights $\bsgamma=\{\gamma_\setu\}_{\setu\subseteq \{1 \mcol s\}}$. \\
	\vspace{-7pt}
	\begin{algorithmic}
		\STATE Set $z_1=1$.
		\vspace{1pt}
		\FOR{$d=2$ \bf{to} $s$}
		\STATE $z_d = \underset{z \in \{1,\ldots,N-1\}}{\operatorname*{argmin}} V_{N,d,\bsgamma}(z_1,\ldots,z_{d-1},z)$
		\ENDFOR
	\end{algorithmic}
	\vspace{5pt}
	\textbf{Return:} Generating vector $\bsz=(z_1,\ldots, z_s) \in \{1,\ldots,N-1\}^s$ for $N$.
\end{algorithm}

In order to derive an error bound for lattice rules based on generating vectors constructed by Algorithm \ref{alg:cbc}, we first show the following theorem.
\begin{theorem} \label{thm:cbc}
	Let $N > 2$ be prime and $\bsgamma=\{\gamma_\setu\}_{\setu\subseteq \{1 \mcol s\}}$ be a sequence of positive weights with $\gamma_\emptyset=1$.
	Denote by $\bsz$ the corresponding generating vector constructed by Algorithm \ref{alg:cbc}. Then $\bsz \in \{1,\ldots,N-1\}^s$ satisfies 
	\begin{equation*}
		V_{N,s,\bsgamma}(\bsz) 
		\le 
		\sum_{\emptyset\neq\setu\subseteq\{1 \mcol s\}}\gamma_\setu (2\ln N)^{\abs{\setu}}.
	\end{equation*}
\end{theorem}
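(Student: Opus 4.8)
The plan is to prove the bound by induction on the dimension, mimicking the classical component-by-component averaging argument; the only point needing care is that the functions $K_\setu$ from \eqref{eq:def_K_u} are not sign-definite (unlike the summands of $T(N,\bsz)$), so the averaging step must be paired with a crude absolute-value estimate rather than a monotonicity argument. For the base case $s=1$ I would note that \RefAlg{alg:cbc} sets $z_1=1$, so $V_{N,1,\bsgamma}(z_1)=\gamma_{\{1\}}\sum_{k=1}^{N-1}(-2\ln(2\sin(\pi k/N)))=-2\gamma_{\{1\}}\ln N$ by the product identity $\prod_{k=1}^{N-1}(2\sin(\pi k/N))=N$ in \eqref{eq:prod_sin}, which is trivially at most $\gamma_{\{1\}}(2\ln N)^1$ since $\gamma_{\{1\}}>0$ and $N>2$.

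For the inductive step I would assume the bound in dimension $d-1$ (the first $d-1$ components produced by \RefAlg{alg:cbc} are the same regardless of the ambient dimension) and use that, since $z_d$ minimises $V_{N,d,\bsgamma}(z_1,\dots,z_{d-1},\cdot)$ over $\{1,\dots,N-1\}$, one has $V_{N,d,\bsgamma}(\bsz_{\{1 \mcol d\}})\le\frac1{N-1}\sum_{z=1}^{N-1}V_{N,d,\bsgamma}(z_1,\dots,z_{d-1},z)$. Splitting the sum over $\setu\subseteq\{1 \mcol d\}$ according to whether $d\in\setu$, and writing $\setu=\setv\cup\{d\}$ with $\setv\subseteq\{1 \mcol d-1\}$ while using the product structure $K_{\setv\cup\{d\}}(\bsx_\setv,x_d)=K_\setv(\bsx_\setv)\,(-2\ln(2\sin\pi x_d))$ (with $K_\emptyset\equiv1$), the $d\notin\setu$ part reproduces $V_{N,d-1,\bsgamma}(\bsz_{\{1 \mcol d-1\}})$, while after averaging over $z$ the $d\in\setu$ part becomes
\[
\sum_{\setv\subseteq\{1 \mcol d-1\}}\gamma_{\setv\cup\{d\}}\sum_{k=1}^{N-1}K_\setv\!\left(\left\{\tfrac{k\bsz_\setv}{N}\right\}\right)\cdot\frac1{N-1}\sum_{z=1}^{N-1}\bigl(-2\ln(2\sin(\pi kz/N))\bigr).
\]
Here I would invoke primality: for $1\le k\le N-1$ the map $z\mapsto kz\bmod N$ permutes $\{1,\dots,N-1\}$, so the inner average equals $-\frac{2\ln N}{N-1}$, again by \eqref{eq:prod_sin}.

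To finish I would bound this last expression from above by passing to absolute values: since $\gcd(z_j,N)=1$, each $\{kz_j/N\}$ lies in $\{1/N,\dots,(N-1)/N\}$, so $\sin(\pi kz_j/N)\ge\sin(\pi/N)\ge1/N$ (using $\sin 2x\ge x$ on $[0,\pi/4]$), and together with $-2\ln(2\sin\theta)\ge-2\ln2$ this gives $|-2\ln(2\sin(\pi kz_j/N))|\le2\ln N$ for $N\ge2$, hence $|K_\setv(\{k\bsz_\setv/N\})|\le(2\ln N)^{|\setv|}$ and $|\sum_{k=1}^{N-1}K_\setv(\{k\bsz_\setv/N\})|\le(N-1)(2\ln N)^{|\setv|}$. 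Substituting, the $d\in\setu$ contribution is at most $\sum_{\setv\subseteq\{1 \mcol d-1\}}\gamma_{\setv\cup\{d\}}(2\ln N)^{|\setv|+1}=\sum_{d\in\setu\subseteq\{1 \mcol d\}}\gamma_\setu(2\ln N)^{|\setu|}$; adding the inductive bound on $V_{N,d-1,\bsgamma}(\bsz_{\{1 \mcol d-1\}})$ and noting that $\{\emptyset\ne\setu\subseteq\{1 \mcol d-1\}\}$ and $\{d\in\setu\subseteq\{1 \mcol d\}\}$ partition $\{\emptyset\ne\setu\subseteq\{1 \mcol d\}\}$ gives the claim in dimension $d$. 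I expect the main obstacle to be conceptual rather than computational: recognising that the negativity of the averaged inner sum is harmless and that the crude uniform per-factor bound $|-2\ln(2\sin\theta)|\le2\ln N$ — not any cancellation exploited from the permutation — is exactly what makes the induction close; the rest is bookkeeping with the product form of $K_\setu$ and the identity \eqref{eq:prod_sin}.
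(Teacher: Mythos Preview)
Your proposal is correct and follows essentially the same route as the paper's own proof: induction on $d$, the base case via the product identity \eqref{eq:prod_sin}, the averaging argument for the minimiser $z_d$, the split of subsets by whether $d\in\setu$, the permutation argument from primality giving the factor $-2\ln N/(N-1)$, and finally the crude bound $|K_\setv(\{k\bsz_\setv/N\})|\le(2\ln N)^{|\setv|}$ to close the induction. The paper proceeds identically, including the same justification of the per-factor bound via $\sin^{-2}(\pi kz_j/N)\le N^2$.
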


\begin{proof}
	We prove the statement by induction on $d\in\{1,\ldots,s\}$. For $d=1$ we obtain that
	\begin{align*}
		V_{N,1,\bsgamma}
		&=
		\sum_{\emptyset \neq \setu \subseteq\{1\}}\gamma_\setu \sum_{k=1}^{N-1} K_\setu 
		\left(\left\{ \frac{k z_{\setu}}{N} \right\}\right)
		=
		\gamma_{\{1\}}\sum_{k=1}^{N-1} K_{\{ 1\}} \left(\left\{ \frac{k z_1}{N} \right\}\right) \\
		&=
		\gamma_{\{1\}}\sum_{k=1}^{N-1} \left(-2\ln \left(2\sin \left( \pi\frac{k}{N} \right)\right)\right)
		=
		-2 \gamma_{\{1\}} \sum_{k=1}^{N-1} \ln \left(2\sin \left( \pi \frac{k}{N} \right)\right)\\
		&=
		-2 \gamma_{\{1\}} \ln\left(\prod_{k=1}^{N-1} 2\sin \left(\pi \frac{k}{N}\right) \right)
		=
		-2 \gamma_{\{1\}} \ln N
		\le
		\gamma_{\{1\}} (2\ln N) 
		,
	\end{align*}
	where the penultimate equality follows from \eqref{eq:prod_sin}, see proof of Theorem \ref{theorem:upper_bound_H}.
	
	Consider then $d \ge 2$ and assume that the statement holds for $d-1$, that is, 
	\begin{equation*}
		V_{N,d-1,\bsgamma}(z_1,\ldots,z_{d-1}) 
		\le 
		\sum_{\emptyset\neq\setu\subseteq\{1 \mcol d-1\}}\gamma_\setu (2\ln N)^{\abs{\setu}}
		.
	\end{equation*}
	By the standard averaging argument and again using $\sum_{k=1}^{N-1} \ln \left(2\sin\left(\frac{\pi k}{N}\right)\right) = \ln N$, we obtain
	\begin{align*}
		\lefteqn{V_{N,d,\bsgamma}(z_1,\ldots,z_{d-1},z_d)
		\le
		\frac{1}{N-1} \sum_{z=1}^{N-1} V_{N,d,\bsgamma}(z_1,\ldots,z_{d-1},z)} \\
		&=
		\frac{1}{N-1} \sum_{z=1}^{N-1} \left(\sum_{\emptyset\neq \setu\subseteq \{1 \mcol d-1\}} \!\!\! \gamma_\setu 		
		\sum_{k=1}^{N-1} K_\setu \left(\left\{ \frac{k \bsz_\setu}{N} \right\}\right)
		+ \sum_{\setv\subseteq \{1 \mcol d-1\}} \!\!\! \gamma_{\setv \cup \{d\}}\sum_{k=1}^{N-1} K_{\setv \cup \{d\}}
		\left(\left\{ \frac{k (\bsz_\setv,z)}{N} \right\}\right) \right) \\
		&=
		V_{N,d-1,\bsgamma} (\bsz_{\{1 \mcol d-1\}}) +
		\frac{1}{N-1}\sum_{\setv\subseteq \{1 \mcol d-1\}} \!\!\!\! \gamma_{\setv \cup \{d\}} 
		\sum_{k=1}^{N-1} K_\setv \left(\left\{ \frac{k \bsz_\setv}{N} \right\}\right)
		\sum_{z=1}^{N-1} \left[ -2\ln\left(2\sin \left(\pi\left\{\frac{kz}{N}\right\}\right)\right)\right]\\
		&=
		V_{N,d-1,\bsgamma} (\bsz_{\{1 \mcol d-1\}}) + \frac{1}{N-1}\sum_{\setv\subseteq \{1 \mcol d-1\}} \gamma_{\setv \cup \{d\}} 
		\sum_{k=1}^{N-1} K_\setv \left(\left\{ \frac{k \bsz_\setv}{N} \right\}\right) (-2\ln N) \\
		&\le
		\sum_{\emptyset\neq\setu\subseteq\{1 \mcol d-1\}}\gamma_\setu (2\ln N)^{\abs{\setu}}
		+ \frac{2\ln N}{N-1}\sum_{k=1}^{N-1} \sum_{\setv\subseteq \{1 \mcol d-1\}} \gamma_{\setv \cup \{d\}} 
		\prod_{j\in\setv} \abs{K_\setv \left(\left\{ \frac{k \bsz_\setv}{N} \right\}\right)}
		.
	\end{align*}
	Considering the term
	\begin{equation*}
		\abs{K_\setv \left(\left\{ \frac{k \bsz_\setv}{N} \right\}\right)}
		=
		\prod_{j\in\setv} \abs{-2\ln\left(2\sin \left(\pi\left\{\frac{kz_j}{N}\right\}\right)\right)}
		=
		\prod_{j\in\setv} \abs{-\ln 4 +\ln\left(\sin^{-2} \left(\pi\left\{\frac{kz_j}{N}\right\}\right)\right)}
		,
	\end{equation*}
	we see that, as in the proof of Theorem~\ref{thm:T_target_CBCDBD}, we have
	\begin{equation*}
		0 
		\le 
		\ln \left(\sin^{-2} \left(\pi\left\{\frac{kz_j}{N}\right\}\right)\right) 
		\le 
		2 \ln N
		,
	\end{equation*}
	and hence,
	\begin{equation*}
		\abs{K_\setv \left(\left\{ \frac{k \bsz_\setv}{N} \right\}\right)}
		\le
		\prod_{j\in\setv} (2 \ln N)
		= 
		(2\ln N)^{\abs{\setv}} 
		.
	\end{equation*}
	This finally yields, using the previous estimate,
	\begin{align*}
		V_{N,d,\bsgamma}(z_1,\ldots,z_{d-1},z_d) 
		&\le 
		\sum_{\emptyset\neq\setu\subseteq\{1 \mcol d-1\}} \gamma_\setu (2\ln N)^{\abs{\setu}}
		+ \sum_{\setv\subseteq \{1 \mcol d-1\}} \gamma_{\setv \cup \{d\}} (2\ln N)^{\abs{\setv}} \, (2\ln N) \\ 
		&=
		\sum_{\emptyset\neq\setu \{1 \mcol d\}} \gamma_\setu (2\ln N)^{\abs{\setu}}
	\end{align*}
	as claimed. By induction, the result follows for dimension $s$ and $\bsz$.
\end{proof}

We are now able to show the main result regarding the component-by-component construction in Algorithm \ref{alg:cbc}.

\begin{theorem} \label{thm:optcoeff-cbc}
	Let $N$ be prime and let $\bsgamma=\{\gamma_\setu\}_{\setu \subseteq \{1 \mcol s\}}$ be positive weights.
	Furthermore, let $\bsz=(z_1,\ldots,z_s)$ be the generating vector constructed by Algorithm \ref{alg:cbc}.
		Then the following estimate holds:
		\begin{align} \label{eq:estimate_T_cbc}
			T(N,\bsz)
			\le 
			\frac2N \sum_{\emptyset\neq\setu\subseteq\{1 \mcol s\}} \gamma_\setu 
			\left( (4\ln N)^{\abs{\setu}} + (2 + 4\ln N)^{\abs{\setu}} (1+\ln N) \right),
		\end{align}
		and the $z_1,\ldots,z_s$ are optimal coefficients modulo $N$. Moreover, if the weights satisfy
		\begin{align*}
			\sum_{j \ge 1} \max_{\setv \subseteq \{1 \mcol j-1\}} \frac{\gamma_{\setv \cup \{j\}}}{\gamma_\setv}
			<
			\infty
			,
		\end{align*}
		then $T(N,\bsz)$ can be bounded independently of the dimension. 
\end{theorem}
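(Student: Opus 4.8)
The plan is to read the result off from the two preceding statements about \RefAlg{alg:cbc}, namely the upper bound~\eqref{eq:T_target_CBC} for $T(N,\bsz)$ and the bound on the quality function in \RefThm{thm:cbc}. The key observation is that the first sum on the right-hand side of~\eqref{eq:T_target_CBC} equals $\tfrac1N V_{N,s,\bsgamma}(\bsz)$, so \RefThm{thm:cbc} bounds it by $\tfrac1N \sum_{\emptyset\neq\setu\subseteq\{1 \mcol s\}}\gamma_\setu(2\ln N)^{\abs{\setu}}$. Substituting this into~\eqref{eq:T_target_CBC} yields
\[
	T(N,\bsz)\le\frac1N\sum_{\emptyset\neq\setu\subseteq\{1 \mcol s\}}\gamma_\setu\Big((2\ln N)^{\abs{\setu}}+(2(1+\ln N))^{\abs{\setu}}+2\abs{\setu}\,(1+2\ln N)^{\abs{\setu}}(1+\ln N)\Big).
\]

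Next I would carry out a few elementary simplifications, all valid because $N>2$ prime forces $N\ge 3$ and hence $\ln N\ge1$. In that regime $(2\ln N)^{\abs{\setu}}\le(4\ln N)^{\abs{\setu}}$; moreover $2(1+\ln N)=2+2\ln N\le4\ln N$, so $(2(1+\ln N))^{\abs{\setu}}\le(4\ln N)^{\abs{\setu}}$; and, using $\abs{\setu}\le 2^{\abs{\setu}}$, we get $2\abs{\setu}\,(1+2\ln N)^{\abs{\setu}}\le 2\,(2(1+2\ln N))^{\abs{\setu}}=2\,(2+4\ln N)^{\abs{\setu}}$. Adding these three estimates produces exactly
\[
	T(N,\bsz)\le\frac2N\sum_{\emptyset\neq\setu\subseteq\{1 \mcol s\}}\gamma_\setu\Big((4\ln N)^{\abs{\setu}}+(2+4\ln N)^{\abs{\setu}}(1+\ln N)\Big),
\]
which is~\eqref{eq:estimate_T_cbc}.

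For the optimal-coefficient claim I would note that for a fixed dimension $s$ the right-hand side of~\eqref{eq:estimate_T_cbc} is at most $C_s(\ln N)^{s+1}/N$ for a constant $C_s$ depending only on $s$ and $\bsgamma$, and since $(\ln N)^{s+1}=o(N^\delta)$ for every $\delta>0$, this gives $T(N,\bsz)\le C(\bsgamma,\delta)N^{-1+\delta}$ for all $\delta>0$; as this holds for all primes $N>2$, of which there are infinitely many, the $z_1,\ldots,z_s$ are optimal coefficients modulo $N$ in the sense of Definition~\ref{def:opt_coeff}. For the dimension-free statement I would additionally bound $(2+4\ln N)^{\abs{\setu}}\le(6\ln N)^{\abs{\setu}}$ (again using $N\ge3$) and apply \RefLem{lem:logN} twice, with $a=4$ and $a=6$, observing that the hypothesis $\sum_{j\ge1}\max_{\setv\subseteq\{1 \mcol j-1\}}\gamma_{\setv\cup\{j\}}/\gamma_\setv<\infty$ is precisely the summability condition~\eqref{eq:summability-gamma-tilde}; this yields $\sum_{\emptyset\neq\setu\subseteq\{1 \mcol s\}}\gamma_\setu\big((4\ln N)^{\abs{\setu}}+(2+4\ln N)^{\abs{\setu}}(1+\ln N)\big)\le C_\delta(1+\ln N)N^\delta$, and absorbing the factor $1+\ln N$ into a power of $N$ (for instance by running the estimate with $\delta/2$ in place of $\delta$) gives $T(N,\bsz)\le C(\bsgamma,\delta)N^{-1+\delta}$ with $C(\bsgamma,\delta)$ independent of $s$.

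I do not anticipate a genuine obstacle: the argument is essentially bookkeeping on top of \RefThm{thm:T_target_CBC}, \RefThm{thm:cbc} and \RefLem{lem:logN}. The two points needing care are (i) verifying that every elementary inequality is valid on the relevant range of $N$ --- all of them rely on $N\ge3$, which is guaranteed here since $N>2$ is prime --- and (ii) using the constant $2$ and the bound $\abs{\setu}\le 2^{\abs{\setu}}$ exactly where needed, so that the final numerical constant in~\eqref{eq:estimate_T_cbc} comes out as $2$; a slightly wasteful treatment of the last sum in~\eqref{eq:T_target_CBC} would produce a $3$ instead.
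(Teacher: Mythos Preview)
Your proposal is correct and follows essentially the same approach as the paper: combine \RefThm{thm:T_target_CBC} with \RefThm{thm:cbc}, simplify via $|\setu|\le 2^{|\setu|}$ and the elementary inequalities valid for $N\ge3$, then invoke \RefLem{lem:logN} for the dimension-free part. The only cosmetic differences are that the paper applies \RefLem{lem:logN} once with $a=7$ rather than twice with $a=4,6$, and its displayed intermediate step shows $(6\ln N)^{|\setu|}$ (evidently a typo) where you correctly obtain $(4\ln N)^{|\setu|}$ matching the stated bound.
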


\begin{proof}
     Combining the bound on $T(N,\bsz)$ in Theorem \ref{thm:T_target_CBC}, inserting for $\bsz$ the generating vector obtained from Algorithm \ref{alg:cbc}, 
     with the bound on $V_{N,s,\bsgamma}(\bsz)$ from Theorem \ref{thm:cbc} yields 
	\begin{eqnarray*}
		T(N,\bsz) 
		&\le& 
		\sum_{\emptyset\neq\setu\subseteq\{1 \mcol s\}}\frac{\gamma_\setu}{N} (2\ln N)^{\abs{\setu}}
		+ \sum_{\emptyset\neq\setu\subseteq\{1 \mcol s\}}\frac{\gamma_\setu}{N} (2(1+\ln N))^{\abs{\setu}} \\
		&&+ 
		\sum_{\emptyset\neq\setu\subseteq\{1 \mcol s\}}\frac{\gamma_\setu}{N} 2 {\abs{\setu}}(1+ 2\ln N)^{\abs{\setu}} (1+\ln N) \\
		&\le&
		2 \sum_{\emptyset\neq\setu\subseteq\{1 \mcol s\}}\frac{\gamma_\setu}{N} (6 \ln N)^{\abs{\setu}}
		+ 2 \sum_{\emptyset\neq\setu\subseteq\{1 \mcol s\}}\frac{\gamma_\setu}{N} (2 + 4\ln N)^{\abs{\setu}} (1+\ln N),
	\end{eqnarray*}
	which gives the claimed inequality in \eqref{eq:estimate_T_cbc}. From this, we can deduce that 
	\begin{equation*}
		T(N,\bsz) 
		\le 
		\frac{C_s}{N} (\ln N)^{s+1} \sum_{\emptyset \ne \setu \subseteq \{1 \mcol s\}} \gamma_\setu
	\end{equation*}
	for some constant $C_s$. As any power of $N$ grows asymptotically faster than $\ln N$, we see that Algorithm \ref{alg:cbc} indeed yields optimal coefficients
	in the sense of Definition \ref{def:opt_coeff} for prime $N$. Furthermore, from \eqref{eq:estimate_T_cbc}, we easily find that 
	\begin{align*}
		N \, T(N,\bsz) 
		&\le
		2 \sum_{\emptyset\neq\setu\subseteq\{1 \mcol s\}} \gamma_\setu (6 \ln N)^{\abs{\setu}}
		+ 2 \sum_{\emptyset\neq\setu\subseteq\{1 \mcol s\}} \gamma_\setu (2 + 4\ln N)^{\abs{\setu}} (1+\ln N)
		\\
		&\le
		2 \sum_{\emptyset\neq\setu\subseteq\{1 \mcol s\}}\gamma_\setu (7 \ln N)^{\abs{\setu}} 
		+ 6 \ln N \sum_{\emptyset\neq\setu\subseteq\{1 \mcol s\}}\gamma_\setu (7 \ln N)^{\abs{\setu}} \\
		&=
		(2 + 6 \ln N) \!\!\! \sum_{\emptyset\neq\setu\subseteq\{1 \mcol s\}} \!\!\!\! \gamma_\setu (7 \ln N)^{\abs{\setu}}
		\le 
		\widetilde{C}(\delta/2) N^{\delta/2} \!\!\!\! \sum_{\emptyset\neq\setu\subseteq\{1 \mcol s\}} \!\!\!\! \gamma_\setu (7 \ln N)^{\abs{\setu}}	
	\end{align*}	
	for an arbitrary $\delta>0$, where $\widetilde{C}(\delta/2)$ is a constant depending only on $\delta$. We can now directly use Lemma \ref{lem:logN} with $a=7$
	to see that the sum in the last expression is of order $\calO (N^{\delta/2})$. This yields the claimed result.
\end{proof}

\begin{corollary} \label{cor:main-result-cbc}
	Let $N$ be prime and let $\bsgamma=\{\gamma_\setu\}_{\setu \subseteq \{1 \mcol s\}}$ be positive weights satisfying
	\begin{align*}
		\sum_{j \ge 1} \max_{\setv \subseteq \{1 \mcol j-1\}} \frac{\gamma_{\setv \cup \{j\}}}{\gamma_\setv}
		&<
		\infty
		.
	\end{align*}
	Denote by $\bsz=(z_1,\ldots,z_s)$ the generating vector constructed by Algorithm \ref{alg:cbc}. 
	Then, for any $\delta>0$ and each $\alpha>1$, the worst-case error $e_{N,s,\alpha,\bsgamma^{\alpha}}(\bsz)$ satisfies
	\begin{align*}
		e_{N,s,\alpha,\bsgamma^{\alpha}}(\bsz)
		&\le
		\frac{1}{N^{\alpha}} \left( \sum_{\emptyset\neq \setu \subseteq \{1 \mcol s\}} \gamma_u^\alpha (4\zeta (\alpha))^{\abs{\setu}} 
		+ C(\bsgamma,\delta) N^{\alpha \delta} \right)
	\end{align*}  
	with weight sequence $\bsgamma^{\alpha}=\{\gamma_\setu^{\alpha}\}_{\setu \subseteq \{1 \mcol s\}}$ and positive constant $C(\bsgamma,\delta)$ independent of $s$ and~$N$.
\end{corollary}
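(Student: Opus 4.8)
The plan is to follow verbatim the structure of the proof of \RefCol{cor:main-result-dbd} (which itself mimics that of \RefThm{thm:ex_res}), simply replacing the input bound on $T(N,\bsz)$ from \RefThm{thm:optcoeff-dbd} by the one from \RefThm{thm:optcoeff-cbc}. Since $N$ is prime and \RefAlg{alg:cbc} returns a vector $\bsz\in\{1,\ldots,N-1\}^s$, we have $\gcd(z_j,N)=1$ for all $j$, so \RefProp{prop:trunc_error} applies with the weight sequence $\bsgamma^{\alpha}$ and gives
\[
	e_{N,s,\alpha,\bsgamma^{\alpha}}(\bsz)
	\le
	\frac{1}{N^{\alpha}} \sum_{\emptyset \ne \setu \subseteq \{1 \mcol s\}} \gamma_\setu^{\alpha}\, (4\zeta(\alpha))^{|\setu|}
	+
	T_{\alpha}(N,\bsz),
\]
where $T_{\alpha}(N,\bsz)=\sum_{\bszero\ne\bsm\in M_{N,s}}\delta_N(\bsm\cdot\bsz)/r_{\alpha,\bsgamma^{\alpha}}(\bsm)$. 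This yields the first term of the claimed bound directly.

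Next I would bound $T_{\alpha}(N,\bsz)$ by a power of $T(N,\bsz)$. From the definition~\eqref{eq:decay_func} one has $r_{\alpha,\bsgamma^{\alpha}}(\bsm)=(r_{1,\bsgamma}(\bsm))^{\alpha}$, and $\delta_N(\bsm\cdot\bsz)\in\{0,1\}$ so that $\delta_N(\bsm\cdot\bsz)=\delta_N(\bsm\cdot\bsz)^{\alpha}$; hence each summand of $T_{\alpha}(N,\bsz)$ is the $\alpha$-th power of the corresponding summand of $T(N,\bsz)$. Applying the elementary inequality $\sum_i x_i^{\alpha}\le(\sum_i x_i)^{\alpha}$, valid for $\alpha\ge1$ and $x_i\ge0$, gives $T_{\alpha}(N,\bsz)\le(T(N,\bsz))^{\alpha}$, exactly as in the proof of \RefThm{thm:ex_res}.

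The third and final step is to feed in \RefThm{thm:optcoeff-cbc}. The proof of that theorem shows, under the summability hypothesis on the weights, that $N\,T(N,\bsz)\le\widetilde C(\delta/2)\,N^{\delta/2}\sum_{\emptyset\ne\setu\subseteq\{1 \mcol s\}}\gamma_\setu(7\ln N)^{|\setu|}$, and then \RefLem{lem:logN} (with $a=7$) bounds this sum by $\calO(N^{\delta/2})$ with a constant independent of the dimension; together these give $T(N,\bsz)\le\bar C(\bsgamma,\delta)\,N^{-1+\delta}$ with $\bar C(\bsgamma,\delta)$ independent of $s$ and $N$. Raising to the power $\alpha$ yields $T_{\alpha}(N,\bsz)\le\bar C(\bsgamma,\delta)^{\alpha}N^{-\alpha+\alpha\delta}=N^{-\alpha}\,C(\bsgamma,\delta)\,N^{\alpha\delta}$ with $C(\bsgamma,\delta):=\bar C(\bsgamma,\delta)^{\alpha}$, still independent of $s$ and $N$, and substituting into the first display completes the proof.

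Since all the substantive work already resides in \RefThm{thm:optcoeff-cbc} and \RefLem{lem:logN}, I do not expect a genuine obstacle. The only points needing a little care are: (i) verifying that the constant extracted from \RefLem{lem:logN} is truly dimension-independent, which is exactly where the hypothesis $\sum_{j\ge1}\max_{\setv\subseteq\{1 \mcol j-1\}}\gamma_{\setv\cup\{j\}}/\gamma_\setv<\infty$ enters; and (ii) noting that the identity $r_{\alpha,\bsgamma^{\alpha}}=(r_{1,\bsgamma})^{\alpha}$ is precisely what forces the weights in the function space to be the $\alpha$-th powers of those used to steer \RefAlg{alg:cbc}, so that no knowledge of $\alpha$ is required to run the algorithm.
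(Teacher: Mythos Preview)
Your proposal is correct and follows essentially the same approach as the paper: the paper's proof of \RefCol{cor:main-result-cbc} simply says it is analogous to that of \RefCol{cor:main-result-dbd}, which in turn follows the proof of \RefThm{thm:ex_res} with the existence bound replaced by the constructive bound on $T(N,\bsz)$ from \RefThm{thm:optcoeff-cbc}. Your write-up is in fact more explicit than the paper's, spelling out the identity $r_{\alpha,\bsgamma^{\alpha}}=(r_{1,\bsgamma})^{\alpha}$ and the role of the summability hypothesis via \RefLem{lem:logN}.
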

\begin{proof}
	The proof works analogously to that of Corollary \ref{cor:main-result-dbd}.
\end{proof}

\section{Efficient implementation of the construction schemes} \label{sec:fast_impl}

In this section we discuss the efficient implementation of the two introduced algorithms and analyze their complexity. In the previous sections the two construction methods were formulated for general weights. Here, we consider the implementation for the special case of product weights $\gamma_{\setu} = \prod_{j\in\setu} \gamma_j$ for a sequence of positive 
reals $\{\gamma_j\}_{j\ge1}$. 

\subsection{Implementation and cost analysis of the CBC-DBD algorithm}

Recalling the definition of the quality function in Definition \ref{def:h_rv}, we see that for product weights $h_{r,n,v,\bsgamma}$ can be rewritten as follows.
Let $x \in \N$ be an odd integer, $n, s \in \N$ be positive integers, and let $\bsgamma = \{\gamma_j\}_{j\ge 1}$ be a sequence of positive weights. 
For $1 \le v \le n$, $1 \le r \le s$ and odd integers $z_1,\ldots,z_{r-1}$ the quality function $h_{r,n,v,\bsgamma}$ reads
\begin{align} \label{eq:h_rv_prod}
	h_{r,n,v,\bsgamma}(x)
	&=
	\sum_{t=v}^{n} \frac{1}{2^{t-v}} \sum_{\substack{k=1 \\ k \equiv 1 \tpmod{2}}}^{2^t-1} 
	\left[ \sum_{\emptyset \ne \setu\subseteq \{1 \mcol r-1\}} \gamma_{\setu} \prod_{j\in\setu} \ln \left(\frac{1}{\sin^2(\pi k z_j / 2^t)}\right) \right.\nonumber \\ 
	&\phantom{\qquad\qquad\qquad\qquad}+ 
	\left.\sum_{\setu\subseteq \{1 \mcol r-1\}} \gamma_{\setu\cup \{r\}} \left(\prod_{j\in\setu} \ln \left(\frac{1}{\sin^2(\pi k z_j / 2^t)}\right) \right) 
	\ln \left( \frac{1}{\sin^2(\pi k x / 2^v)} \right) \right] \nonumber \\
	&=
	\underbrace{-\sum_{t=v}^{n} \frac{1}{2^{t-v}} \sum_{\substack{k=1 \\ k \equiv 1 \tpmod{2}}}^{2^t-1} 1}_{=:C_{n,v}} 
	+ \sum_{t=v}^{n} \frac{1}{2^{t-v}} \sum_{\substack{k=1 \\ k \equiv 1 \tpmod{2}}}^{2^t-1} 
	\left[ \sum_{\setu\subseteq \{1 \mcol r-1\}} \gamma_{\setu} \prod_{j\in\setu} \ln \left( \frac{1}{\sin^2(\pi k z_j / 2^t)} \right) \right. \nonumber \\ 
	&\phantom{=}+ 
	\left. \prod_{j=1}^{r-1} \left(1+\gamma_j \ln \left( \frac{1}{\sin^2(\pi k z_j / 2^t)}\right) \right) \gamma_r \ln \left( \frac{1}{\sin^2(\pi k x / 2^v)}\right)\right] \nonumber \\
	&=
	C_{n,v} \!+ \underbrace{\sum_{t=v}^{n} \frac{1}{2^{t-v}} \!\!\!\!\!\!\!\! \sum_{\substack{k=1 \\ k \equiv 1 \tpmod{2}}}^{2^t-1} 
	\prod_{j=1}^{r-1} \left[1+\gamma_j \ln\! \left(\frac{1}{\sin^2(\pi k z_j / 2^t)} \right) \right] \!
	\left( 1 + \gamma_r \ln\! \left(\frac{1}{\sin^2(\pi k x / 2^v)}\right)\right)}_{=:\bar{h}_{r,n,v,\bsgamma}(x)}
\end{align}
with $C_{n,v} = 2^{v-1} (2^{v-1} - 2^n)$. As in each minimization step of Algorithm \ref{alg:cbc-dbd} the variable $v$ is fixed, 
$C_{n,v}$ is constant and we can therefore equivalently minimize the expression $\bar{h}_{r,n,v,\bsgamma}(x)$ in \eqref{eq:h_rv_prod}. 

We observe that a single evaluation of $\bar{h}_{r,n,v,\bsgamma}$ requires $\calO(r \sum_{t=v}^n 2^{t-1})$ operations. Therefore, in a naive
implementation of Algorithm \ref{alg:cbc-dbd}, the number of calculations for each inner loop over the $v=2,\ldots,n$, with $N=2^n$, is
\begin{equation*}
	\calO\left(r \sum_{v=2}^n 2 \sum_{t=v}^n 2^{t-1}\right) 
	=
	\calO\left(r \sum_{v=2}^n \sum_{t=v}^n 2^{t}\right)
	=
	\calO\left(r \, (2^n n - 2(2^n - 1))\right)
	=
	\calO\left(r \, 2^n n \right)
	=
	\calO\left(r  N \ln N \right) .
\end{equation*}
Since there is an inner loop for each $r=2,\ldots,s$, the computational cost of a naive implementation of the component-by-component digit-by-digit construction in 
Algorithm \ref{alg:cbc-dbd} is $\calO\left(s^2 N \ln N \right)$. For large $s$ this cost is prohibitive such that we aim for a more efficient implementation. \\

For $1 \le r < s$, let $z_1,\ldots,z_r$ be constructed by Algorithm \ref{alg:cbc-dbd}. 
For integers $t \in \{2,\ldots,n\}$ and odd $k \in \{1,\ldots,2^t - 1\}$, we introduce the term $q(r,t,k)$ as
\begin{equation*}
	q(r,t,k)
	=
	\prod_{j=1}^{r} \left(1+\gamma_j \ln \left(\frac{1}{\sin^2(\pi k z_j / 2^t)}\right) \right)
\end{equation*}
and note that for the evaluation of $\bar{h}_{r,n,v,\bsgamma}(x)$ in \eqref{eq:h_rv_prod} we can compute and store $q(r-1,t,k)$ as it is independent of $v$ and $x$. 
This way $\bar{h}_{r,n,v,\bsgamma}(x)$ can be rewritten as
\begin{equation} \label{eq:fast-eval-h_rv}
	\bar{h}_{r,n,v,\bsgamma}(x)
	=
	\sum_{t=v}^{n} \frac{1}{2^{t-v}} \sum_{\substack{k=1 \\ k \equiv 1 \tpmod{2}}}^{2^t-1} 
	q(r-1,t,k) \left( 1 + \gamma_r\ln \left( \frac{1}{\sin^2(\pi k x / 2^v)}\right) \right) ,
\end{equation}
where, after determining $z_r$, the value of $q(r,t,k)$ is computed via the recurrence relation
\begin{equation} \label{eq:rec_rel}
	q(r,t,k) 
	=
	q(r-1,t,k) \left(1+\gamma_r \ln \left( \frac{1}{\sin^2(\pi k z_r / 2^t)}\right) \right) .
\end{equation}
For an algorithmic realization of this finding, we introduce the vector $\bsp=(p(1),\ldots,p(N-1))$ of length $N-1$ whose components, 
for the current $r \in \{1,\ldots,s\}$, are given by 
\begin{equation*}
	p(k \, 2^{n-t}) 
	= 
	\prod_{j=1}^{r} \left(1+\gamma_j \ln \left( \frac{1}{\sin^2(\pi k z_j / 2^t)}\right) \right)
	=
	q(r,t,k)
\end{equation*}
for each $t=1,\ldots,n$ and corresponding odd index $k$ in $\{1,3,\ldots,2^t - 1\}$. Furthermore, we note that for the evaluation of $\bar{h}_{r,n,v,\bsgamma}$ we do not 
require the values of $q(r,t,k)$ (or $\bsp$) for $t=2,\ldots,v-1$. Additionally, due to the way the $z_{r,v}$ are constructed in Algorithm \ref{alg:cbc-dbd}, we have that 
$z_{r,n} \bmod{2^v} = z_{r,v}$ for $1 \le v \le n$ and thus, by the periodicity of $\sin^2(\pi x)$,
\begin{equation*}
	\sin^2 \left(\pi \frac{k z_r}{2^v}\right) 
	= 
	\sin^2 \left(\pi \frac{k z_{r,n} \bmod{2^v}}{2^v}\right)
	=
	\sin^2 \left(\pi \frac{k z_{r,v}}{2^v}\right) 
	.
\end{equation*} 
Hence, we can perform the update as in \eqref{eq:rec_rel} for $k=1,3,\ldots,2^v-1$ with $z_{r,n}$ replaced by $z_{r,v}$ 
immediately after each $z_{r,v}$ has been determined.

These observations give rise to a fast implementation of Algorithm \ref{alg:cbc-dbd}.
\begin{algorithm}[H]
	\caption{Fast component-by-component digit-by-digit algorithm}
	\label{alg:fast-cbc-dbd}
	\vspace{5pt}
	\textbf{Input:} Integer $n \in \N$, dimension $s$ and positive weights $\bsgamma=\{\gamma_\setu\}_{\setu\subseteq \{1 \mcol s\}}$. \\
	\vspace{-10pt}
	\begin{algorithmic}
		\FOR{$t=2$ \TO $n$}
		\FOR{$k=1$ \TO $2^{t}-1$ \textbf{in steps of} $2$}
		\STATE $p(k \, 2^{n-t}) = \left(1+\gamma_1 \ln\left( \frac{1}{\sin^2(\pi k / 2^t)}\right) \right)$
		\ENDFOR
		\ENDFOR
		\vspace{5pt}
		\STATE Set $z_{1,n} = 1$ and $z_{2,1} = \ldots = z_{s,1} = 1$.
		\vspace{5pt}
		\FOR{$r=2$ \TO $s$}
		\FOR{$v=2$ \TO $n$}
		\STATE $z^{\ast} = \underset{z \in \{0,1\}}{\argmin} \; \bar{h}_{r,n,v,\bsgamma}(z_{r,v-1} + z \, 2^{v-1})$, 
		where $\bar{h}_{r,n,v,\bsgamma}$ is evaluated using \eqref{eq:fast-eval-h_rv}.
		\STATE $z_{r,v} = z_{r,v-1} + z^{\ast} \, 2^{v-1}$
		\FOR{$k=1$ \TO $2^{v}-1$ \textbf{in steps of} $2$}
		\STATE $p(k \, 2^{n-v}) = p(k \, 2^{n-v}) \left(1+\gamma_r \ln \left( \frac{1}{\sin^2(\pi k z_{r,v} / 2^v)}\right) \right)$
		\ENDFOR
		\ENDFOR
		\ENDFOR
		\vspace{5pt}
		\STATE Set $\bsz = (z_1,\ldots,z_s)$ with $z_r := z_{r,n}$ for $r=1,\ldots,s$.
	\end{algorithmic}
	\vspace{5pt}
	\textbf{Return:} Generating vector $\bsz = (z_1,\ldots,z_s)$ for $N=2^n$.
\end{algorithm}

The computational cost of Algorithm \ref{alg:fast-cbc-dbd} is summarized in the following proposition.
\begin{proposition} \label{prop:cost-alg-cbc-dbd}
	Let $n,s \in \N$ and $N=2^n$. For a given positive weight sequence $\bsgamma = \{\gamma_j\}_{j=1}^s$, a generating vector $\bsz=(z_1,\ldots,z_s)$ 
	can be computed via Algorithm \ref{alg:fast-cbc-dbd} using $\calO(s N \ln N)$ operations and requiring $\calO(N)$ memory.
\end{proposition}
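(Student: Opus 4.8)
The plan is to account separately for the one-time construction of the auxiliary array $\bsp$ and for the cost incurred inside the double loop over $r$ and $v$, under the usual convention that each elementary arithmetic operation, as well as each evaluation of $\ln$ and of $\sin$, costs $\mathcal{O}(1)$.

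First I would bound the initialization. The opening double loop assigns a value to $p(k\,2^{n-t})$ for $t=2,\ldots,n$ and for every odd $k\in\{1,\ldots,2^{t}-1\}$; there are $2^{t-1}$ such $k$, so this costs $\sum_{t=2}^{n}2^{t-1}=2^{n}-2=\mathcal{O}(N)$ operations and fills an array of length $N-1$. One checks that after this step $p(k\,2^{n-t})=q(1,t,k)$ for the relevant index pairs.

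Next I would examine one iteration of the inner loop with $r$ and $v$ fixed. The crucial observation is that the entries of $\bsp$ encode exactly the partial products $q(r-1,t,k)$, so in the evaluation formula~\eqref{eq:fast-eval-h_rv} each summand over a pair $(t,k)$ with $v\le t\le n$ and $k$ odd is obtained in $\mathcal{O}(1)$ operations: one lookup of $q(r-1,t,k)=p(k\,2^{n-t})$, one reduction of $kx$ modulo $2^{v}$, one evaluation of $\ln(1/\sin^{2}(\cdot))$, and $\mathcal{O}(1)$ further arithmetic. Thus one evaluation of $\bar{h}_{r,n,v,\bsgamma}$ costs $\mathcal{O}\!\big(\sum_{t=v}^{n}2^{t-1}\big)=\mathcal{O}(2^{n}-2^{v-1})$, and the minimization over $z\in\{0,1\}$ needs two such evaluations. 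The subsequent update of $\bsp$ through the recurrence~\eqref{eq:rec_rel} modifies only the entries with odd $k\in\{1,\ldots,2^{v}-1\}$ and so costs $\mathcal{O}(2^{v-1})$; here one uses that $z_{r,n}\bmod 2^{v}=z_{r,v}$, so the update may legitimately be carried out with $z_{r,v}$ in place of the still-incomplete $z_r$, since $\sin^{2}(\pi k z_r/2^{v})=\sin^{2}(\pi k z_{r,v}/2^{v})$. Summing over $v=2,\ldots,n$ for fixed $r$ gives
\[
	\sum_{v=2}^{n}\Big(2\,(2^{n}-2^{v-1})+2^{v-1}\Big)
	=
	\sum_{v=2}^{n}\big(2^{n+1}-2^{v-1}\big)
	\le
	(n-1)\,2^{n+1}
	=
	\mathcal{O}(N\ln N),
\]
and summing over $r=2,\ldots,s$ yields $\mathcal{O}(sN\ln N)$ operations in total, the $\mathcal{O}(N)$ initialization being absorbed. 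For memory, the only data structure whose size grows with $N$ is $\bsp$, of length $N-1$; the remaining data — the current digits $z_{r,v}$ and the output $\bsz$ — take $\mathcal{O}(s)$ storage, so the working memory is $\mathcal{O}(N)$.

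The arithmetic above is a routine geometric-series estimate; the one point requiring genuine care is the bookkeeping for $\bsp$. I would verify explicitly that the invariant $p(k\,2^{n-t})=q(r,t,k)$ is preserved by the update~\eqref{eq:rec_rel} after each digit $z_{r,v}$ is fixed, and that the periodicity identity $\sin^{2}(\pi k z_r/2^{v})=\sin^{2}(\pi k z_{r,v}/2^{v})$ licenses applying that update before the higher bits of $z_r$ have been chosen. Once these facts are in place, the claim that every $(t,k)$-summand in~\eqref{eq:fast-eval-h_rv} is a constant-cost operation is immediate, and the stated bounds follow from the summations above.
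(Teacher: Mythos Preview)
Your proof is correct and follows essentially the same approach as the paper: bound the cost of one evaluation of $\bar{h}_{r,n,v,\bsgamma}$ via~\eqref{eq:fast-eval-h_rv} by $\mathcal{O}\!\big(\sum_{t=v}^{n}2^{t-1}\big)$, sum the resulting geometric series over $v$ and then over $r$, and note that the array $\bsp$ furnishes the $\mathcal{O}(N)$ memory bound. Your account is in fact slightly more detailed than the paper's, explicitly checking the invariant on $\bsp$ and the periodicity identity that justifies updating with $z_{r,v}$ in place of $z_r$.
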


\begin{proof} 
		Due to the relation in \eqref{eq:fast-eval-h_rv}, the cost of evaluating $\overline{h}_{r,n,v,\bsgamma}(x)$ can be reduced to $\calO(\sum_{t=v}^{n} 2^{t-1})$. 
		Thus, the number of calculations in the inner loop over $v = 2,\dots, n$ of Algorithm \ref{alg:fast-cbc-dbd} equals  
		\begin{equation*}
			\calO\left(\sum_{v=2}^n 2 \sum_{t=v}^n 2^{t-1}\right) 
			=
			\calO\left(\sum_{v=2}^n \sum_{t=v}^n 2^{t}\right)
			=
			\calO\left(2^n n - 2(2^n - 1)\right)
			=
			\calO\left(2^n n \right)
			=
			\calO\left(N \ln N \right) .
		\end{equation*}
		Hence, the outer loop over $r=2,\ldots,s$, which is the main cost of the algorithm, can be executed in $\calO\left(s N \ln N \right)$ operations. Furthermore, we observe that initialization and update of the vector $\bsp \in \R^{N-1}$ can both be executed in
		$\calO(N)$ operations. To store the vector $\bsp$ itself, we require $\calO(N)$ of memory.  
\end{proof}

We note that the running time of Algorithm \ref{alg:fast-cbc-dbd} can be further reduced by precomputing and storing the $N$ values 
\begin{equation*}
	\ln \left(\frac{1}{\sin^2(\pi k / N)}\right)
	\quad \text{for} \quad
	k=1,\ldots,N-1 .
\end{equation*}
Proposition \ref{prop:cost-alg-cbc-dbd} reveals that the fast implementation of the component-by-component digit-by-digit construction achieves the
same computational complexity as the state-of-the-art component-by-component methods, see, e.g., \cite{NC06b,NC06}. In these
constructions, the speed-up of the algorithm is achieved by exploiting the special (block-) circulant structure of the involved matrices
and by employing a fast matrix-vector product which uses fast Fourier transformations (FFTs). We refer to \cite{NC06b,NC06} for details. 
In contrast, our method does not rely on the use of FFTs and its low complexity is the result of the smaller search space for the components $z_j$ of $\bsz$.  

\subsection{Fast implementation and cost analysis of the CBC algorithm} \label{sec:impl-cbc}

As before, for product weights $\gamma_{\setu} = \prod_{j\in\setu} \gamma_j$ with $\{\gamma_j\}_{j \ge 1} \in \R_{+}^{\N}$, the quality function $V_{N,s,\bsgamma}$ in Definition \ref{def:V_N,s} can be written in a special form. For this purpose, let $N \in \N$ and consider an integer vector $\bsz \in \Z^s$. Then $V_{N,s,\bsgamma}(\bsz)$ equals
\begin{align*}
	V_{N,s,\bsgamma}(\bsz) 
	&=
	\sum_{\emptyset\neq\setu\subseteq\{1 \mcol s\}}\gamma_\setu \sum_{k=1}^{N-1} K_\setu \left(\left\{ \frac{k \bsz_\setu}{N} \right\}\right)
	=
	\sum_{k=1}^{N-1} \left[ -1 + \prod_{j=1}^{s} \left( 1 + \gamma_j K_{\{j\}} \left(\left\{ \frac{k z_j}{N} \right\}\right) \right) \right] \\
	&=
	-(N-1) + \underbrace{\sum_{k=1}^{N-1} \prod_{j=1}^{s} \left( 1 - 2\gamma_j \ln \left( 2 \sin\left(\pi \left\{ \frac{k z_j}{N} \right\} 
	\right) \right) \right)}_{=: \bar{V}_{N,s,\bsgamma}(\bsz)} .
\end{align*}
As the term $N-1$ is constant, we can equivalently minimize the function $\bar{V}_{N,s,\bsgamma}(\bsz)$ in each step of Algorithm \ref{alg:cbc}. Then, observing that the quantity $\bar{V}_{N,s,\bsgamma}(\bsz)$, with symmetric function $\omega(x)=-2\ln(2\sin(\pi x))$, has the same structure as the worst-case error expression which is minimized in the common CBC algorithm, see, e.g., \cite{NC06b,NC06}, we can employ the same machinery to obtain a fast implementation of Algorithm \ref{alg:cbc}. The computational cost of such a fast implementation is summarized in the following proposition.

\begin{proposition} \label{prop:cost-alg-cbc}
	Let $N,s \in \N$. For a given positive weight sequence $\bsgamma = \{\gamma_j\}_{j=1}^s$, a generating vector 
	$\bsz=(z_1,\ldots,z_s)$ can be computed via Algorithm \ref{alg:cbc} using $\calO(s N \ln N)$ operations.
\end{proposition}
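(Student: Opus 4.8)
The plan is to show that each of the $s-1$ component selections performed by Algorithm~\ref{alg:cbc} can be executed in $\calO(N\ln N)$ operations, by recasting the minimization as a circulant matrix--vector product and using the fast Fourier transform, exactly in the spirit of the fast CBC constructions of \cite{NC06b,NC06}. First I would reduce the $d$-th minimization step to such a product. Writing $\omega(x):=-2\ln(2\sin(\pi x))$ and, for the already-chosen components $z_1,\ldots,z_{d-1}$, introducing the vector $\bsp^{(d-1)}=(p^{(d-1)}(1),\ldots,p^{(d-1)}(N-1))$ with
\[
	p^{(d-1)}(k)
	:=
	\prod_{j=1}^{d-1}\left(1 + \gamma_j\,\omega\!\left(\left\{\frac{k z_j}{N}\right\}\right)\right)
	,
\]
the product-weight form of $\bar V_{N,d,\bsgamma}$ derived above gives
\begin{align*}
	\bar V_{N,d,\bsgamma}(z_1,\ldots,z_{d-1},z)
	&=
	\sum_{k=1}^{N-1} p^{(d-1)}(k)\left(1 + \gamma_d\,\omega\!\left(\left\{\frac{k z}{N}\right\}\right)\right) \\
	&=
	\sum_{k=1}^{N-1} p^{(d-1)}(k)
	+
	\gamma_d \sum_{k=1}^{N-1} p^{(d-1)}(k)\,\omega\!\left(\left\{\frac{k z}{N}\right\}\right)
	.
\end{align*}
The first sum is independent of $z$, so choosing $z_d$ amounts to computing the vector $\bigl(\sum_{k=1}^{N-1} p^{(d-1)}(k)\,\omega(\{kz/N\})\bigr)_{z=1}^{N-1}$ and taking its smallest entry. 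Since $\bsp^{(d-1)}$ is obtained from $\bsp^{(d-2)}$ by the componentwise recurrence $p^{(d-1)}(k) = p^{(d-2)}(k)\,(1 + \gamma_{d-1}\,\omega(\{k z_{d-1}/N\}))$ in $\calO(N)$ operations once $z_{d-1}$ is known (and $\bsp^{(1)}$ is initialized in $\calO(N)$), the total cost is dominated by these $s-1$ matrix--vector products with the matrix $\Omega := (\omega(\{kz/N\}))_{z,k=1}^{N-1}$.

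Next I would invoke the standard reindexing that exposes the circulant structure of $\Omega$. Because $N$ is prime, the residues $\{1,\ldots,N-1\}$ form a cyclic group of order $N-1$ under multiplication modulo $N$; fixing a primitive root $g$ and writing $z\equiv g^{a}$ and $k\equiv g^{-b}\pmod N$ for $a,b\in\{0,\ldots,N-2\}$, the entry $\omega(\{g^{\,a-b}\bmod N / N\})$ depends only on $a-b\bmod(N-1)$. Hence, in the $g$-power coordinates, $\Omega$ becomes an $(N-1)\times(N-1)$ circulant matrix, and multiplying by it is a cyclic convolution of length $N-1$, computable with two FFTs and one pointwise product, i.e.\ in $\calO(N\ln N)$ operations; the permutations between the natural and the $g$-power indexings, the multiplication by $\gamma_d$, the addition of the $z$-independent term $\sum_{k} p^{(d-1)}(k)$, and the final $\argmin$ pass are each $\calO(N)$. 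Summing over $d=2,\ldots,s$ gives the claimed $\calO(sN\ln N)$ bound.

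The only point that requires care — and which I would handle by citing \cite{NC06b,NC06} rather than reproving — is the precise bookkeeping of the $g$-power reindexing: one must verify that permuting $\bsp^{(d-1)}$, performing the length-$(N-1)$ FFT-based convolution against the first column of the reindexed $\Omega$, and un-permuting reproduces $\Omega\bsp^{(d-1)}$, with the convolution taken modulo $N-1$ rather than $N$. Everything else — the update recurrence for $\bsp$, the fact that $\sum_k p^{(d-1)}(k)$ does not depend on $z$, and the $\calO(N)$ cost of the elementary steps — follows immediately from the product-weight structure of $\bar V_{N,s,\bsgamma}$ already exhibited, since $\omega$ is precisely the symmetric kernel playing the role of the worst-case-error kernel in the classical fast CBC setting. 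Thus no ideas beyond those of \cite{NC06b,NC06} are needed, and the main (minor) obstacle is simply making the index permutation and the length of the FFT explicit.
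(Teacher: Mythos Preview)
Your proposal is correct and follows exactly the approach the paper has in mind: the paper does not give a proof at all but simply observes that $\bar V_{N,s,\bsgamma}$ has the same product-weight structure (with kernel $\omega(x)=-2\ln(2\sin(\pi x))$) as the worst-case error in the standard fast CBC setting and refers to \cite{DKS13,NC06b,NC06}. Your sketch of the primitive-root reindexing and FFT-based circulant matrix--vector product is precisely the content of those references, so you have in fact supplied more detail than the paper itself.
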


Since the mentioned fast implementation of Algorithm \ref{alg:cbc} can be achieved entirely analogously as for the standard CBC construction,
we omit further implementation details and refer the reader to \cite{DKS13,NC06b,NC06}.

\section{Numerical results} \label{sec:num}

In this section, we illustrate the error convergence rates and the computational cost of the two introduced algorithms, discussed in Sections \ref{sec:constr} and \ref{sec:fast_impl}, 
by means of numerical experiments. As before we consider the construction of rank-$1$ lattice rules in the weighted space $E_{s,\bsgamma}^{\alpha}$ of smoothness $\alpha>1$, and, as in Section \ref{sec:fast_impl}, assume product weights $\gamma_{\setu} = \prod_{j \in \setu} \gamma_j, \setu \subseteq \{1 \mcol s\}$. \\

In order to test the competitiveness of our construction methods, we compare the worst-case error of the constructed lattice rules as well as the algorithms' computation times with those of a state-of-the-art fast component-by-component algorithm, see, e.g., \cite{NC06}. Based on \eqref{eq:wce-infty}, we observe that for product weights the worst-case error of an $N$-point rank-$1$ lattice rule in the space $E_{s,\bsgamma}^{\alpha}$ equals
\begin{align} \label{eq:wce_prod}
	e_{N,s,\alpha,\bsgamma}(\bsz)
	=
	-1 + \frac1N \sum_{k=0}^{N-1} \prod_{j=1}^s \left(1 + \gamma_j \sum_{m \in \Z_\ast} \frac{\rme^{2\pi\icomp k m z_j / N}}{|m|^{\alpha}} \right)
	,
\end{align}    
see, e.g., \cite[Section 5.1]{DKS13}, where the character property of lattice points can be used to involve the sums of complex exponentials. Note that for an even smoothness parameter $\alpha$, the sum of exponentials in \eqref{eq:wce_prod} simplifies to the Bernoulli polynomial $B_{\alpha}(\{k z_j / N\})$ modulo a constant, see, e.g., \cite{DKS13}. For other real $\alpha>1$, the occurring sum can be numerically approximated.   

The different algorithms have all been implemented in double-precision and arbitrary-precision floating-point arithmetic using Python 3.6.3. 
The arbitrary-precision is provided by the multi-precision Python library mpmath.

\subsection{Error convergence behavior -- CBC-DBD algorithm} \label{sec:err-cbc-dbd}

We consider the convergence behavior of $e_{N,s,\alpha,\bsgamma^{\alpha}}(\bsz)$ for generating vectors $\bsz$ constructed by the component-by-component digit-by-digit construction in Algorithm \ref{alg:cbc-dbd} for different sequences of product weights $\bsgamma = \{\gamma_j\}_{j \ge 1}$. As a benchmark we use the lattice rules designed by the CBC construction for $N=2^n$ as introduced in \cite{NC06b} using the error $e_{N,s,\alpha,\bsgamma^{\alpha}}$ as the quality function. According to Corollary \ref{cor:main-result-dbd}, the constructed lattice rules will asymptotically achieve the almost optimal error convergence rate of $\calO(N^{-\alpha + \delta})$. We stress that in certain cases the asymptotic rates promised in Corollary \ref{cor:main-result-dbd} will not be visible for the ranges of $N$ considered in our numerical experiments. Therefore, the presented graphs are to be understood as a demonstration of the pre-asymptotic error behavior.

\begin{figure}
	\centering
	\textbf{Error convergence in the space $E_{s,\bsgamma}^{\alpha}$ with $s=100, \alpha=2,3,4$.} \par\medskip 
	\hspace{-0.25cm}
	\centering
	\begin{subfigure}[b]{0.5\textwidth}
		\centering
		\begin{tikzpicture}
		\begin{axis}[%
		width=0.8\textwidth,
		height=0.8\textwidth,
		at={(0\textwidth,0\textwidth)},
		scale only axis,
		xmode=log,
		xmin=10,
		xmax=1000000,
		xminorticks=true,
		xlabel={Number of points $N=2^n$},
		xmajorgrids,
		ymode=log,
		ymin=1e-20,
		ymax=0.05,
		yminorticks=true,
		ylabel={Worst-case error $e_{N,s,\alpha,\mathbf{\gamma^{\alpha}}}(\bsz)$},
		ymajorgrids,
		axis background/.style={fill=white},
		legend style={at={(0.03,0.03)},anchor=south west,legend cell align=left,align=left,draw=white!15!black}
		]
		\addplot [color=mycolor1-fig,solid,line width=0.6pt,mark=o,mark options={solid},forget plot]
		table[row sep=crcr]{%
			64	0.00456848398492677\\
			128	0.00180527770620017\\
			256	0.000527341603738657\\
			512	0.000154711610983483\\
			1024	4.4138016980327e-05\\
			2048	1.7936054185462e-05\\
			4096	5.55536375055292e-06\\
			8192	1.31495247412994e-06\\
			16384	2.89641597990014e-07\\
			32768	9.51220898455531e-08\\
			65536	4.37485710629889e-08\\
			131072	1.37797577659967e-08\\
		};
		\addplot [color=mycolor2-fig,solid,line width=0.6pt,mark=triangle,mark options={solid},forget plot]
		table[row sep=crcr]{%
			64	0.00409976594175709\\
			128	0.00119251461366021\\
			256	0.000361277437484666\\
			512	0.000103860481837796\\
			1024	3.04693603577786e-05\\
			2048	9.01145193308296e-06\\
			4096	2.50414674271563e-06\\
			8192	7.42491276265867e-07\\
			16384	2.09891082841716e-07\\
			32768	5.8218190108069e-08\\
			65536	1.73653800368653e-08\\
			131072	4.74437606413374e-09\\
		};
		\addplot [color=mycolor3-fig,dashed,line width=0.8pt]
		table[row sep=crcr]{%
			64	0.0022548712679664\\
			128	0.00065236158475316\\
			256	0.000188736112481254\\
			512	5.46036446459596e-05\\
			1024	1.5797496141171e-05\\
			2048	4.57040708451647e-06\\
			4096	1.32227415860916e-06\\
			8192	3.8254993881153e-07\\
			16384	1.10676333445583e-07\\
			32768	3.2020004559438e-08\\
			65536	9.26377537154802e-09\\
			131072	2.68012248328068e-09\\
		};
		\addlegendentry{\footnotesize $\mathcal{O}(N^{-1.79})$};
		
		\addplot [color=mycolor4-fig,solid,line width=0.6pt,mark=o,mark options={solid},forget plot]
		table[row sep=crcr]{%
			64	3.61521988706428e-05\\
			128	1.26119132134169e-05\\
			256	1.74687949187365e-06\\
			512	3.26128857995221e-07\\
			1024	3.51880305969369e-08\\
			2048	1.29997860262164e-08\\
			4096	1.73942755276091e-09\\
			8192	1.49916193812972e-10\\
			16384	1.06552243870806e-11\\
			32768	2.37503292181002e-12\\
			65536	1.08740063823396e-12\\
			131072	2.08218406458688e-13\\
		};
		\addplot [color=mycolor5-fig,solid,line width=0.6pt,mark=triangle,mark options={solid},forget plot]
		table[row sep=crcr]{%
			64	3.20857003931377e-05\\
			128	4.24405134978819e-06\\
			256	6.10693274610581e-07\\
			512	9.15064949064824e-08\\
			1024	1.33241000855469e-08\\
			2048	1.85746132533158e-09\\
			4096	2.49635054331026e-10\\
			8192	4.11434405620622e-11\\
			16384	4.98523852212555e-12\\
			32768	6.7523457633164e-13\\
			65536	1.08502304781415e-13\\
			131072	1.68766816480382e-14\\
		};
		\addplot [color=darkgray,dashed,line width=0.8pt]
		table[row sep=crcr]{%
			64	1.76471352162258e-05\\
			128	2.47565497857671e-06\\
			256	3.4730099236257e-07\\
			512	4.87216435003276e-08\\
			1024	6.83498923865687e-09\\
			2048	9.58856773627537e-10\\
			4096	1.34514668601304e-10\\
			8192	1.88705931548723e-11\\
			16384	2.64728962067458e-12\\
			32768	3.71379016982403e-13\\
			65536	5.20994654977233e-14\\
			131072	7.30885209186999e-15\\
		};
		\addlegendentry{\footnotesize $\mathcal{O}(N^{-2.83})$};
		
		\addplot [color=mycolor6-fig,solid,line width=0.6pt,mark=o,mark options={solid},forget plot]
		table[row sep=crcr]{%
			64	4.10993574421105e-07\\
			128	1.350902328208e-07\\
			256	8.94859363946374e-09\\
			512	1.26929904643267e-09\\
			1024	4.26011381546609e-11\\
			2048	1.60612135236372e-11\\
			4096	8.32289135024346e-13\\
			8192	2.60469786257831e-14\\
			16384	5.90643710070039e-16\\
			32768	9.62079731462703e-17\\
			65536	4.29885460703069e-17\\
			131072	5.07347175236328e-18\\
		};
		\addplot [color=mycolor7-fig,solid,line width=0.6pt,mark=triangle,mark options={solid},forget plot]
		table[row sep=crcr]{%
			64	3.50313263877513e-07\\
			128	2.0225519419585e-08\\
			256	1.54397259050068e-09\\
			512	1.24700153969556e-10\\
			1024	8.69354851101778e-12\\
			2048	6.05479560258685e-13\\
			4096	3.74058544629636e-14\\
			8192	3.27874642107338e-15\\
			16384	1.81718581558055e-16\\
			32768	1.5507262173846e-17\\
			65536	3.13844694235168e-18\\
			131072	1.00277368938886e-18\\
		};
		\addplot [color=black,dashed,line width=0.8pt]
		table[row sep=crcr]{%
			64	1.92672295132632e-07\\
			128	1.36264759451582e-08\\
			256	9.63713265345987e-10\\
			512	6.81572595542452e-11\\
			1024	4.82032591745738e-12\\
			2048	3.40910742340199e-13\\
			4096	2.41104307536635e-14\\
			8192	1.70517616176231e-15\\
			16384	1.20596175669763e-16\\
			32768	8.52899419561529e-18\\
			65536	6.03201068233197e-19\\
			131072	4.26605435966558e-20\\
		};
		\addlegendentry{\footnotesize $\mathcal{O}(N^{-3.82})$};
		
		\end{axis}
		\end{tikzpicture}
		\caption{Weight sequence $\bsgamma=\{\gamma_j\}_{j=1}^s$ with $\gamma_j = 1/j^2$.}    
	\end{subfigure}
	\begin{subfigure}[b]{0.5\textwidth}  
		\centering 
		\begin{tikzpicture}
		
		\begin{axis}[%
		width=0.8\textwidth,
		height=0.8\textwidth,
		at={(0\textwidth,0\textwidth)},
		scale only axis,
		xmode=log,
		xmin=10,
		xmax=1000000,
		xminorticks=true,
		xlabel={Number of points $N=2^n$},
		xmajorgrids,
		ymode=log,
		ymin=1e-21,
		ymax=0.1,
		yminorticks=true,
		ylabel={Worst-case error $e_{N,s,\alpha,\mathbf{\gamma^{\alpha}}}(\bsz)$},
		ymajorgrids,
		axis background/.style={fill=white},
		legend style={at={(0.03,0.03)},anchor=south west,legend cell align=left,align=left,draw=white!15!black}
		]
		\addplot [color=mycolor1-fig,solid,line width=0.6pt,mark=o,mark options={solid},forget plot]
		table[row sep=crcr]{%
			64	0.00136397342871476\\
			128	0.00041495206968083\\
			256	0.000113777898495654\\
			512	3.57186608991275e-05\\
			1024	8.96735413484793e-06\\
			2048	3.12393880750252e-06\\
			4096	5.94557183683682e-07\\
			8192	1.73173640260874e-07\\
			16384	4.81706937364865e-08\\
			32768	1.07434643818022e-08\\
			65536	3.40729981226737e-09\\
			131072	1.28708617753193e-09\\
		};
		\addplot [color=mycolor2-fig,solid,line width=0.6pt,mark=triangle,mark options={solid},forget plot]
		table[row sep=crcr]{%
			64	0.00135443541980486\\
			128	0.000352782520955296\\
			256	9.47182153016234e-05\\
			512	2.54418866678807e-05\\
			1024	6.74205804207878e-06\\
			2048	1.77453409528167e-06\\
			4096	4.60002661249674e-07\\
			8192	1.22592457360222e-07\\
			16384	3.15483574591104e-08\\
			32768	8.19217011937004e-09\\
			65536	2.17930226511727e-09\\
			131072	5.67109158519495e-10\\
		};
		\addplot [color=mycolor3-fig,dashed,line width=0.8pt]
		table[row sep=crcr]{%
			64	0.000744939480892675\\
			128	0.000196021310122118\\
			256	5.15805041987398e-05\\
			512	1.3572750900087e-05\\
			1024	3.57149604986438e-06\\
			2048	9.39793570816593e-07\\
			4096	2.47294675233293e-07\\
			8192	6.50724353706765e-08\\
			16384	1.71229802707082e-08\\
			32768	4.50569356565352e-09\\
			65536	1.18561571564153e-09\\
			131072	3.11979633033988e-10\\
		};
		\addlegendentry{\footnotesize $\mathcal{O}(N^{-1.93})$};
		
		\addplot [color=mycolor4-fig,solid,line width=0.6pt,mark=o,mark options={solid},forget plot]
		table[row sep=crcr]{%
			64	1.13368834740657e-05\\
			128	1.8166485482477e-06\\
			256	2.43348499136029e-07\\
			512	5.21642131851584e-08\\
			1024	5.25024469357468e-09\\
			2048	1.4589656944201e-09\\
			4096	7.57672752806919e-11\\
			8192	1.28068368928801e-11\\
			16384	1.97500282913633e-12\\
			32768	1.59493905804064e-13\\
			65536	3.37040686213196e-14\\
			131072	9.82935412695632e-15\\
		};
		\addplot [color=mycolor5-fig,solid,line width=0.6pt,mark=triangle,mark options={solid},forget plot]
		table[row sep=crcr]{%
			64	1.13348899973189e-05\\
			128	1.41526367230331e-06\\
			256	1.85148368058231e-07\\
			512	2.45570266330019e-08\\
			1024	3.14334593096102e-09\\
			2048	4.04508861032469e-10\\
			4096	5.04793453124573e-11\\
			8192	6.64228727495044e-12\\
			16384	8.21131912618704e-13\\
			32768	1.04386938639e-13\\
			65536	1.38458999184636e-14\\
			131072	2.05178314766857e-15\\
		};
		\addplot [color=darkgray,dashed,line width=0.8pt]
		table[row sep=crcr]{%
			64	6.23418949852541e-06\\
			128	7.97841345231606e-07\\
			256	1.02106426554975e-07\\
			512	1.30674129714347e-08\\
			1024	1.6723460758279e-09\\
			2048	2.14024107407535e-10\\
			4096	2.73904541731385e-11\\
			8192	3.5053853927877e-12\\
			16384	4.48613472208133e-13\\
			32768	5.741281625145e-14\\
			65536	7.34759804180265e-15\\
			131072	9.40333544124629e-16\\
		};
		\addlegendentry{\footnotesize $\mathcal{O}(N^{-2.97})$};
		
		\addplot [color=mycolor6-fig,solid,line width=0.6pt,mark=o,mark options={solid},forget plot]
		table[row sep=crcr]{%
			64	1.40458470615939e-07\\
			128	1.11780202154902e-08\\
			256	7.2514379736635e-10\\
			512	1.06571700449759e-10\\
			1024	4.03558187327091e-12\\
			2048	9.90733614782783e-13\\
			4096	1.36115603650878e-14\\
			8192	1.30935087549699e-15\\
			16384	1.16559426831773e-16\\
			32768	3.45818234387286e-18\\
			65536	4.65372362137083e-19\\
			131072	9.76207273895936e-20\\
		};
		\addplot [color=mycolor7-fig,solid,line width=0.6pt,mark=triangle,mark options={solid},forget plot]
		table[row sep=crcr]{%
			64	1.40456754552417e-07\\
			128	8.69405110717746e-09\\
			256	5.56518340338504e-10\\
			512	3.61778864043262e-11\\
			1024	2.27876940256736e-12\\
			2048	1.44530448811119e-13\\
			4096	8.91158916171079e-15\\
			8192	5.74738100312171e-16\\
			16384	3.54789570640588e-17\\
			32768	2.48552725505387e-18\\
			65536	3.85554386927719e-19\\
			131072	2.66476059951592e-19\\
		};
		\addplot [color=black,dashed,line width=0.8pt]
		table[row sep=crcr]{%
			64	7.72512150038291e-08\\
			128	4.93429219043921e-09\\
			256	3.15169663278727e-10\\
			512	2.0130935262345e-11\\
			1024	1.28582982994252e-12\\
			2048	8.21302304152064e-14\\
			4096	5.24593114188092e-15\\
			8192	3.35075080226011e-16\\
			16384	2.1402360486991e-17\\
			32768	1.36703999027963e-18\\
			65536	8.73173936192531e-20\\
			131072	5.57725251834073e-21\\
		};
		\addlegendentry{\footnotesize $\mathcal{O}(N^{-3.97})$};
		
		\end{axis}
		\end{tikzpicture}
		\caption{Weight sequence $\bsgamma=\{\gamma_j\}_{j=1}^s$ with $\gamma_j = 1/j^3$.}
	\end{subfigure}
	\vskip\baselineskip
	\hspace{-0.25cm}
	\centering
	\begin{subfigure}[b]{0.5\textwidth}
		\centering
		\begin{tikzpicture}
		
		\begin{axis}[%
		width=0.8\textwidth,
		height=0.8\textwidth,
		at={(0\textwidth,0\textwidth)},
		scale only axis,
		xmode=log,
		xmin=10,
		xmax=1000000,
		xminorticks=true,
		xlabel={Number of points $N=2^n$},
		xmajorgrids,
		ymode=log,
		ymin=0.00002,
		ymax=25000000,
		yminorticks=true,
		ylabel={Worst-case error $e_{N,s,\alpha,\mathbf{\gamma^{\alpha}}}(\bsz)$},
		ymajorgrids,
		axis background/.style={fill=white},
		legend style={at={(0.03,0.03)},anchor=south west,legend cell align=left,align=left,draw=white!15!black}
		]
		\addplot [color=mycolor1-fig,solid,line width=0.6pt,mark=o,mark options={solid},forget plot]
		table[row sep=crcr]{%
			64	4417079.48643413\\
			128	2208539.31795966\\
			256	1104269.19820593\\
			512	552134.113436488\\
			1024	276066.703147798\\
			2048	138033.261541012\\
			4096	69016.3247481713\\
			8192	34508.0564800858\\
			16384	17253.8180482969\\
			32768	8626.7707941766\\
			65536	4313.28896327669\\
			131072	2156.56910773271\\
		};
		\addplot [color=mycolor2-fig,solid,line width=0.6pt,mark=triangle,mark options={solid},forget plot]
		table[row sep=crcr]{%
			64	4349233.97513606\\
			128	2199934.17774299\\
			256	1092062.79327424\\
			512	544975.459994963\\
			1024	271766.502104828\\
			2048	135828.761904194\\
			4096	67322.022192407\\
			8192	33903.6227190315\\
			16384	17012.9251321677\\
			32768	8530.18056490266\\
			65536	4250.92070965355\\
			131072	2127.45418057421\\
		};
		\addplot [color=mycolor3-fig,dashed,line width=0.8pt]
		table[row sep=crcr]{%
			64	2827002.08383844\\
			128	1414157.70966979\\
			256	707407.341243682\\
			512	353867.990128419\\
			1024	177016.192986314\\
			2048	88549.2145474827\\
			4096	44295.1758519762\\
			8192	22157.87699286\\
			16384	11084.0854198531\\
			32768	5544.61736718673\\
			65536	2773.59661027552\\
			131072	1387.44256764378\\
		};
		\addlegendentry{\footnotesize $\mathcal{O}(N^{-1})$};
		
		\addplot [color=mycolor4-fig,solid,line width=0.6pt,mark=o,mark options={solid},forget plot]
		table[row sep=crcr]{%
			64	387.877686853467\\
			128	193.746068340041\\
			256	96.6315665213086\\
			512	48.0340928835433\\
			1024	23.921630108221\\
			2048	12.0387450337173\\
			4096	5.90424827351647\\
			8192	2.92329253254853\\
			16384	1.42634453252555\\
			32768	0.696563053768625\\
			65536	0.339859818456783\\
			131072	0.16330063823923\\
		};
		\addplot [color=mycolor5-fig,solid,line width=0.6pt,mark=triangle,mark options={solid},forget plot]
		table[row sep=crcr]{%
			64	361.432455046776\\
			128	182.738064076475\\
			256	88.819678868358\\
			512	44.4346449019205\\
			1024	22.1652168955361\\
			2048	10.8576167927295\\
			4096	5.37954662071342\\
			8192	2.61661948780476\\
			16384	1.2773475338927\\
			32768	0.608827210513876\\
			65536	0.28927185901447\\
			131072	0.133285353981137\\
		};
		\addplot [color=darkgray,dashed,line width=0.8pt]
		table[row sep=crcr]{%
			64	234.931095780404\\
			128	115.550056092526\\
			256	56.832899955765\\
			512	27.9530674982589\\
			1024	13.7486206611028\\
			2048	6.76221205757388\\
			4096	3.32597087655262\\
			8192	1.63586740218925\\
			16384	0.804595787777658\\
			32768	0.395737686834019\\
			65536	0.194642228010293\\
			131072	0.0957341142510418\\
		};
		\addlegendentry{\footnotesize $\mathcal{O}(N^{-1.02})$};
		
		\addplot [color=mycolor6-fig,solid,line width=0.6pt,mark=o,mark options={solid},forget plot]
		table[row sep=crcr]{%
			64	14.2709132849861\\
			128	7.08109889271043\\
			256	3.46532136703737\\
			512	1.59647671572642\\
			1024	0.785733512985636\\
			2048	0.422086357174344\\
			4096	0.175270755775307\\
			8192	0.0795068660293254\\
			16384	0.0336611338085015\\
			32768	0.0146743292221481\\
			65536	0.00670279436242792\\
			131072	0.00255255675510684\\
		};
		\addplot [color=mycolor7-fig,solid,line width=0.6pt,mark=triangle,mark options={solid},forget plot]
		table[row sep=crcr]{%
			64	13.2055070573877\\
			128	6.3515715101183\\
			256	2.8752289377894\\
			512	1.35210167341946\\
			1024	0.614997296374464\\
			2048	0.27017115967038\\
			4096	0.123929428701347\\
			8192	0.0480435011470263\\
			16384	0.0193523515203181\\
			32768	0.00786358669078902\\
			65536	0.00325179826963299\\
			131072	0.00123784843985397\\
		};
		\addplot [color=black,dashed,line width=0.8pt]
		table[row sep=crcr]{%
			64	8.58357958730201\\
			128	3.76114581946106\\
			256	1.64805577106507\\
			512	0.72214371761052\\
			1024	0.316428338191083\\
			2048	0.138652308077507\\
			4096	0.0607545539224456\\
			8192	0.0266213802964751\\
			16384	0.0116649344474526\\
			32768	0.00511133134901287\\
			65536	0.00223967895208421\\
			131072	0.000981380674797729\\
		};
		\addlegendentry{\footnotesize $\mathcal{O}(N^{-1.19})$};
		
		\end{axis}
		\end{tikzpicture}
		\caption{Weight sequence $\bsgamma=\{\gamma_j\}_{j=1}^s$ with $\gamma_j = (0.95)^j$.}
	\end{subfigure}
	\begin{subfigure}[b]{0.5\textwidth}  
		\centering 
		\begin{tikzpicture}
		
		\begin{axis}[%
		width=0.8\textwidth,
		height=0.8\textwidth,
		at={(0\textwidth,0\textwidth)},
		scale only axis,
		xmode=log,
		xmin=10,
		xmax=1000000,
		xminorticks=true,
		xlabel={Number of points $N=2^n$},
		xmajorgrids,
		ymode=log,
		ymin=1e-17,
		ymax=1,
		yminorticks=true,
		ylabel={Worst-case error $e_{N,s,\alpha,\mathbf{\gamma^{\alpha}}}(\bsz)$},
		ymajorgrids,
		axis background/.style={fill=white},
		legend style={at={(0.03,0.03)},anchor=south west,legend cell align=left,align=left,draw=white!15!black}
		]
		\addplot [color=mycolor1-fig,solid,line width=0.6pt,mark=o,mark options={solid},forget plot]
		table[row sep=crcr]{%
			64	0.0448432984147419\\
			128	0.0209390552045867\\
			256	0.00718223638700622\\
			512	0.00222713082717785\\
			1024	0.000868313446033489\\
			2048	0.000321281296432149\\
			4096	0.000115589430341442\\
			8192	4.13301613669945e-05\\
			16384	2.25768276888366e-05\\
			32768	6.91855926606921e-06\\
			65536	2.25704042110754e-06\\
			131072	8.37331278614467e-07\\
		};
		\addplot [color=mycolor2-fig,solid,line width=0.6pt,mark=triangle,mark options={solid},forget plot]
		table[row sep=crcr]{%
			64	0.0382667538287894\\
			128	0.0142847955914795\\
			256	0.00519004925790926\\
			512	0.0018563028986182\\
			1024	0.000649295569225828\\
			2048	0.000227164438691047\\
			4096	7.60162357546622e-05\\
			8192	2.73102799286214e-05\\
			16384	9.7005546959578e-06\\
			32768	3.29473743980793e-06\\
			65536	1.10760838906525e-06\\
			131072	3.70643753904948e-07\\
		};
		\addplot [color=mycolor3-fig,dashed,line width=0.8pt]
		table[row sep=crcr]{%
			64	0.0210467146058342\\
			128	0.0074390513271694\\
			256	0.00262936452005296\\
			512	0.00092936013951983\\
			1024	0.000328486317641086\\
			2048	0.000116104894420316\\
			4096	4.1037771695203e-05\\
			8192	1.45049759884448e-05\\
			16384	5.12684582359897e-06\\
			32768	1.81210559189436e-06\\
			65536	6.40496474666695e-07\\
			131072	2.26386219376768e-07\\
		};
		\addlegendentry{\footnotesize $\mathcal{O}(N^{-1.50})$};
		
		\addplot [color=mycolor4-fig,solid,line width=0.6pt,mark=o,mark options={solid},forget plot]
		table[row sep=crcr]{%
			64	0.000885234122394915\\
			128	0.000501121180778071\\
			256	7.44226175001177e-05\\
			512	6.31956933205184e-06\\
			1024	1.74556624440887e-06\\
			2048	3.80295256960537e-07\\
			4096	8.21769738670034e-08\\
			8192	1.74391239875537e-08\\
			16384	1.44005809283461e-08\\
			32768	1.75009000978227e-09\\
			65536	3.22917913325766e-10\\
			131072	6.16268130558317e-11\\
		};
		\addplot [color=mycolor5-fig,solid,line width=0.6pt,mark=triangle,mark options={solid},forget plot]
		table[row sep=crcr]{%
			64	0.000381009808069608\\
			128	8.88565335724644e-05\\
			256	1.89462736532981e-05\\
			512	3.39071547457998e-06\\
			1024	5.79750145408817e-07\\
			2048	1.27024925430706e-07\\
			4096	2.20578490193047e-08\\
			8192	4.54266260991007e-09\\
			16384	1.03390851025441e-09\\
			32768	1.99731888256572e-10\\
			65536	3.43249427660016e-11\\
			131072	7.00954801728504e-12\\
		};
		\addplot [color=darkgray,dashed,line width=0.8pt]
		table[row sep=crcr]{%
			64	0.000209555394438284\\
			128	4.20210366323822e-05\\
			256	8.42625657236441e-06\\
			512	1.68967273331375e-06\\
			1024	3.38821150434402e-07\\
			2048	6.79420160592572e-08\\
			4096	1.36240536940449e-08\\
			8192	2.73195954174116e-09\\
			16384	5.47825420049022e-10\\
			32768	1.09852538541115e-10\\
			65536	2.20281494474047e-11\\
			131072	4.41718848304615e-12\\
		};
		\addlegendentry{\footnotesize $\mathcal{O}(N^{-2.32})$};
		
		\addplot [color=mycolor6-fig,solid,line width=0.6pt,mark=o,mark options={solid},forget plot]
		table[row sep=crcr]{%
			64	3.57796295622052e-05\\
			128	2.50942631308733e-05\\
			256	1.68249186544005e-06\\
			512	3.08335303170983e-08\\
			1024	6.28865519885279e-09\\
			2048	7.87616663531856e-10\\
			4096	1.03462710747759e-10\\
			8192	1.37574381061064e-11\\
			16384	1.78938831114578e-11\\
			32768	8.15547186285366e-13\\
			65536	8.44314445353111e-14\\
			131072	7.54378399249423e-15\\
		};
		\addplot [color=mycolor7-fig,solid,line width=0.6pt,mark=triangle,mark options={solid},forget plot]
		table[row sep=crcr]{%
			64	6.08563097749778e-06\\
			128	8.44312772090429e-07\\
			256	8.82747471683997e-08\\
			512	1.00549228520788e-08\\
			1024	8.53309945874362e-10\\
			2048	1.11242835393357e-10\\
			4096	1.15086270387497e-11\\
			8192	1.14714460742172e-12\\
			16384	1.75636707459577e-13\\
			32768	1.67055087934552e-14\\
			65536	1.90656098475711e-15\\
			131072	1.80936308064528e-16\\
		};
		\addplot [color=black,dashed,line width=0.8pt]
		table[row sep=crcr]{%
			64	3.34709703762378e-06\\
			128	3.74452328903001e-07\\
			256	4.18913897759069e-08\\
			512	4.68654726356782e-09\\
			1024	5.24301661299552e-10\\
			2048	5.86555979448711e-11\\
			4096	6.56202225593312e-12\\
			8192	7.34118099483577e-13\\
			16384	8.21285516826921e-14\\
			32768	9.18802983640035e-15\\
			65536	1.02789943990177e-15\\
			131072	1.14994974696808e-16\\
		};
		\addlegendentry{\footnotesize $\mathcal{O}(N^{-3.16})$};
		
		\end{axis}
		\end{tikzpicture}
		\caption{Weight sequence $\bsgamma=\{\gamma_j\}_{j=1}^s$ with $\gamma_j = (0.7)^j$.}
	\end{subfigure}
	\vskip\baselineskip
	\begin{tikzpicture}
	\hspace{0.05\linewidth}
	\begin{customlegend}[
	legend columns=5,legend style={align=left,draw=none,column sep=1.5ex},
	legend entries={CBC-DBD, standard fast CBC \quad, $\alpha=2$, $\alpha=3$, $\alpha=4$}
	]
	\addlegendimage{color=gray, mark=o,solid,line width=1.0pt,line legend}
	\addlegendimage{color=gray, mark=triangle,solid,line width=1.0pt}  
	\addlegendimage{color=mycolor-alpha1, only marks,mark=square*, solid, line width=5pt}
	\addlegendimage{color=mycolor-alpha2, only marks,mark=square*, solid, line width=5pt}
	\addlegendimage{color=mycolor-alpha3, only marks,mark=square*, solid, line width=5pt}
	\end{customlegend}
	\end{tikzpicture}
	\caption{Convergence results of the worst-case error $e_{N,s,\alpha,\bsgamma^{\alpha}}(\bsz)$ in the weighted space $E_{s,\bsgamma}^\alpha$
		for smoothness parameters $\alpha=2,3,4$ with dimension $s=100$. The generating vectors $\bsz$ are constructed via the component-by-component digit-by-digit algorithm 
		and the classic CBC construction for $N=2^n$, respectively.}  
	\label{fig:cbc-dbd}
\end{figure}

The results in Figure \ref{fig:cbc-dbd} illustrate that the CBC-DBD algorithm reliably constructs good lattice rules with
worst-case error values that are comparable to those of the corresponding lattice rules constructed by the common CBC construction. While the asymptotic error behavior of both constructions is identical, we observe that the actual errors of the CBC-DBD algorithm are always slightly higher. However, this behavior was to be expected and can be explained by the fact that the common CBC construction uses the worst-case error of the space $E_{s,\bsgamma}^{\alpha}$ as the quality function and is therefore directly tailored to the respective function space. The CBC-DBD construction, on the other hand, uses a more general quality function which is independent of the smoothness $\alpha$. Note that this also implies that while we only need to run the CBC-DBD algorithm once for a fixed choice of $N$, $s$, and $\bsgamma$, the usual CBC algorithm needs to be run once for each different $\alpha$.
Furthermore, we observe that the pre-asymptotic decay of the used weight sequences determines the error convergence rate. 
The faster the weights decay, the closer the error rate is to the optimal rate $\calO(N^{-\alpha})$. The latter observation is also not surprising, 
as smaller weights can be expected to yield smaller constants in the errors.

\subsection{Computational complexity: CBC-DBD algorithm}

Here, we illustrate the computational complexity of the component-by-component digit-by-digit construction in Algorithm \ref{alg:fast-cbc-dbd} which was stated in Proposition \ref{prop:cost-alg-cbc-dbd}. To this end, let $n,s \in \N$ and $N=2^n$ and use the weight sequence $\bsgamma = \{\gamma_j\}_{j=1}^s$ with $\gamma_j=j^{-2}$. Note that the chosen weights do not influence the computation times. In Table \ref{tab:dbd_times} we report on the timings for the two construction methods considered in Section \ref{sec:err-cbc-dbd}. The displayed times solely measure the duration for constructing the generating vectors but do not comprise the error calculation. The timings were performed on an Intel Core i5 CPU with 2.3 GHz using Python 3.6.3.

\begin{table}[H]
	\captionof{table}{Computation times (in seconds) for constructing the generating vector $\bsz$ of a lattice rule with $N=2^n$ points in $s$ dimensions using the component-by-component digit-by-digit algorithm (\textbf{bold font}) and the standard fast CBC construction (normal font). For the CBC algorithm we construct the lattice rules with smoothness parameter $\alpha=2$.}	
	\label{tab:dbd_times}
	\centering
	\begin{tabular}{p{2cm}p{2cm}p{2cm}p{2cm}p{2cm}p{2cm}} 
		\toprule[1.2pt]
		& $s=50$ & $s=100$ & $s=500$ & $s=1000$ & $s=2000$ \\ 
		\toprule[1.2pt]
		\multirow{2}{6em}{$n=10$}
		& 0.038 & 0.075 & 0.37 & 0.743 & 1.485 \\
		& \textbf{0.061} & \textbf{0.119} & \textbf{0.595} & \textbf{1.184} & \textbf{2.371} \\
		\midrule
		\multirow{2}{6em}{$n=12$}
		& 0.047 & 0.096 & 0.476 & 0.951 & 1.897 \\
		& \textbf{0.093} & \textbf{0.185} & \textbf{0.922} & \textbf{1.843} & \textbf{3.685} \\
		\midrule
		\multirow{2}{6em}{$n=14$}
		& 0.068 & 0.138 & 0.674 & 1.339 & 2.676 \\
		& \textbf{0.155} & \textbf{0.31} & \textbf{1.547} & \textbf{3.081} & \textbf{6.166} \\
		\midrule
		\multirow{2}{6em}{$n=16$}
		& 0.165 & 0.304 & 1.423 & 2.845 & 5.626 \\
		& \textbf{0.344} & \textbf{0.678} & \textbf{3.394} & \textbf{6.804} & \textbf{13.624} \\
		\midrule
		\multirow{2}{6em}{$n=18$}
		& 0.586 & 1.053 & 4.746 & 9.497 & 18.867 \\
		& \textbf{1.145} & \textbf{2.293} & \textbf{11.63} & \textbf{23.1} & \textbf{46.184} \\
		\midrule
		\multirow{2}{6em}{$n=20$}
		& 3.357 & 6.203 & 28.935 & 57.438 & 114.284 \\
		& \textbf{6.31} & \textbf{12.757} & \textbf{64.102} & \textbf{128.897} & \textbf{257.454} \\
		\midrule
	\end{tabular}
\end{table}

The timings in Table \ref{tab:dbd_times} confirm that both considered algorithms have a similar dependence on $N$ and $s$. The observable linear dependence on the dimension $s$ is in accordance with the complexity $\calO(s N \ln N)$ of both algorithms. We assert that the computation times for both algorithms roughly differ by a  factor between $1.6$ and $2.45$. Furthermore, we note that the CBC algorithm, in particular the fast Fourier transformations, are based on compiled and optimized code via Python's Discrete Fourier Transform (numpy.fft) library. It is therefore noteworthy that the CBC-DBD algorithm, which is not based on any compiled libraries, is competitive nonetheless.    

\subsection{Error convergence behavior: CBC algorithm}

Here, we consider the convergence behavior of $e_{N,s,\alpha,\bsgamma^{\alpha}}(\bsz)$ for generating vectors $\bsz$ 
obtained by the CBC construction in Algorithm \ref{alg:cbc} and the common CBC algorithm as in, e.g., \cite{NC06b} 
for prime $N$ and different product weight sequences $\bsgamma = \{\gamma_j\}_{j \ge 1}$. Again, we aim to illustrate 
the almost optimal error convergence rate of $\calO(N^{-\alpha + \delta})$ in $E_{s,\bsgamma}^\alpha$ which can be
achieved according to Corollary \ref{cor:main-result-cbc}, but stress that the presented graphs are to be understood 
as an illustration of the pre-asymptotic error behavior for the considered ranges of $N$.

The graphs in Figure \ref{fig:cbc} show that in most cases the CBC algorithm with smoothness-independent quality function $V_{N,s,\bsgamma}$ yields lattice rules with worst-case error values that are almost identical to those corresponding to rules constructed by the CBC algorithm which uses the worst-case error $e_{N,s,\alpha,\bsgamma^{\alpha}}$ as quality measure. In addition, we observe that for higher smoothness $\alpha$ and pre-asymptotically slowly decaying weights $\gamma_j$ the standard CBC construction with the worst-case error as the quality function excels as it is directly tailored to the function space. As before, we see that the faster the pre-asymptotic decay of the weight sequences, the closer the error rate is to the optimal rate of $\calO(N^{-\alpha})$.

\begin{figure}[H]
	\centering
	\textbf{Error convergence in the space $E_{s,\bsgamma}^{\alpha}$ with $s=100, \alpha=2,3,4$.} \par\medskip 
	\hspace{-0.25cm}
	\centering
	\begin{subfigure}[b]{0.5\textwidth}
		\centering
		\begin{tikzpicture}
		
		\begin{axis}[%
		width=0.8\textwidth,
		height=0.8\textwidth,
		at={(0\textwidth,0\textwidth)},
		scale only axis,
		xmode=log,
		xmin=10,
		xmax=1000000,
		xminorticks=true,
		xlabel={Prime number of points $N$},
		xmajorgrids,
		ymode=log,
		ymin=1e-20,
		ymax=0.05,
		yminorticks=true,
		ylabel={Worst-case error $e_{N,s,\alpha,\mathbf{\gamma^{\alpha}}}(\bsz)$},
		ymajorgrids,
		axis background/.style={fill=white},
		legend style={at={(0.03,0.03)},anchor=south west,legend cell align=left,align=left,draw=white!15!black}
		]
		\addplot [color=mycolor1-fig,solid,line width=0.6pt,mark=o,mark options={solid},forget plot]
		table[row sep=crcr]{%
			61	0.00457643900069773\\
			127	0.00129024975277151\\
			251	0.000370526608114807\\
			509	0.000105877987992033\\
			1021	3.02304172140273e-05\\
			2039	9.09422202926339e-06\\
			4093	2.57164023457242e-06\\
			8191	7.20803591650862e-07\\
			16381	2.16478646837305e-07\\
			32749	5.87915871034376e-08\\
			65537	1.76403411668631e-08\\
			131071	4.93980545929888e-09\\
		};
		\addplot [color=mycolor2-fig,solid,line width=0.6pt,mark=triangle,mark options={solid},forget plot]
		table[row sep=crcr]{%
			61	0.00456232536139465\\
			127	0.00130476226288932\\
			251	0.000368981263023153\\
			509	0.000105705960946351\\
			1021	3.03337003297473e-05\\
			2039	8.99821748552568e-06\\
			4093	2.55498783039824e-06\\
			8191	7.15461120642863e-07\\
			16381	2.11306027619938e-07\\
			32749	5.98416675984414e-08\\
			65537	1.69106150795327e-08\\
			131071	4.7398178633678e-09\\
		};
		\addplot [color=mycolor3-fig,dashed,line width=0.8pt]
		table[row sep=crcr]{%
			61	0.00250927894876706\\
			127	0.00067605212450741\\
			251	0.000199911106791359\\
			509	5.64559373793378e-05\\
			1021	1.62574532824649e-05\\
			2039	4.71870571359881e-06\\
			4093	1.35706184679981e-06\\
			8191	3.92422425412986e-07\\
			16381	1.13613430828501e-07\\
			32749	3.29129171791428e-08\\
			65537	9.51771021278553e-09\\
			131071	2.75536221354429e-09\\
		};
		\addlegendentry{\footnotesize $\mathcal{O}(N^{-1.79})$};
		
		\addplot [color=mycolor4-fig,solid,line width=0.6pt,mark=o,mark options={solid},forget plot]
		table[row sep=crcr]{%
			61	3.60153891517686e-05\\
			127	5.23820215110285e-06\\
			251	6.52136302581965e-07\\
			509	9.3750555581003e-08\\
			1021	1.27915645867856e-08\\
			2039	1.99699854753412e-09\\
			4093	2.67670539584283e-10\\
			8191	3.65633429058261e-11\\
			16381	5.99848485456692e-12\\
			32749	7.08526504070968e-13\\
			65537	1.20098872223416e-13\\
			131071	1.65216607713508e-14\\
		};
		\addplot [color=mycolor5-fig,solid,line width=0.6pt,mark=triangle,mark options={solid},forget plot]
		table[row sep=crcr]{%
			61	3.55385962362886e-05\\
			127	5.24624710662842e-06\\
			251	6.45554536183099e-07\\
			509	9.35604020577442e-08\\
			1021	1.26259939267129e-08\\
			2039	1.9271096846979e-09\\
			4093	2.62248264953695e-10\\
			8191	3.54383459909993e-11\\
			16381	5.44174786322261e-12\\
			32749	7.20695796385969e-13\\
			65537	1.00917659884327e-13\\
			131071	1.45407483077433e-14\\
		};
		\addplot [color=darkgray,dashed,line width=0.8pt]
		table[row sep=crcr]{%
			61	1.95462279299587e-05\\
			127	2.4750420821148e-06\\
			251	3.62912103764839e-07\\
			509	4.94915529058199e-08\\
			1021	6.95996977522084e-09\\
			2039	9.91024478888501e-10\\
			4093	1.39081245165676e-10\\
			8191	1.96878784393541e-11\\
			16381	2.79223064550054e-12\\
			32749	3.96382688012283e-13\\
			65537	5.61130423319878e-14\\
			131071	7.95736881362615e-15\\
		};
		\addlegendentry{\footnotesize $\mathcal{O}(N^{-2.82})$};
		
		\addplot [color=mycolor6-fig,solid,line width=0.6pt,mark=o,mark options={solid},forget plot]
		table[row sep=crcr]{%
			61	4.26391149499209e-07\\
			127	3.40693871202977e-08\\
			251	1.72573267949556e-09\\
			509	1.27243926374671e-10\\
			1021	8.48349833066318e-12\\
			2039	6.76529864227778e-13\\
			4093	4.13286530839397e-14\\
			8191	2.79205692825797e-15\\
			16381	2.5481539542921e-16\\
			32749	1.26462262900138e-17\\
			65537	1.28408894654685e-18\\
			131071	8.34498031107218e-20\\
		};
		\addplot [color=mycolor7-fig,solid,line width=0.6pt,mark=triangle,mark options={solid},forget plot]
		table[row sep=crcr]{%
			61	4.21003649769558e-07\\
			127	3.21381546592023e-08\\
			251	1.70058942725749e-09\\
			509	1.20365703414581e-10\\
			1021	7.93817323589033e-12\\
			2039	6.64882524071393e-13\\
			4093	3.99122540438182e-14\\
			8191	2.58792239758862e-15\\
			16381	2.22686158081971e-16\\
			32749	1.37494984068789e-17\\
			65537	1.51663609650966e-18\\
			131071	1.4364696226651e-19\\
		};
		\addplot [color=black,dashed,line width=0.8pt]
		table[row sep=crcr]{%
			61	2.31552007373257e-07\\
			127	1.38459655881684e-08\\
			251	1.01117013059546e-09\\
			509	6.68964846312654e-11\\
			1021	4.61503206595749e-12\\
			2039	3.23823034172968e-13\\
			4093	2.22773328575489e-14\\
			8191	1.55068974869935e-15\\
			16381	1.08219894709299e-16\\
			32749	7.56222412378339e-18\\
			65537	5.26425449116358e-19\\
			131071	3.67329167390429e-20\\
		};
		\addlegendentry{\footnotesize $\mathcal{O}(N^{-3.84})$};
		
		\end{axis}
		\end{tikzpicture}
		\caption{Weight sequence $\bsgamma=\{\gamma_j\}_{j=1}^s$ with $\gamma_j = 1/j^2$.}    
	\end{subfigure}
	\begin{subfigure}[b]{0.5\textwidth}  
		\centering 
		\begin{tikzpicture}
		
		\begin{axis}[%
		width=0.8\textwidth,
		height=0.8\textwidth,
		at={(0\textwidth,0\textwidth)},
		scale only axis,
		xmode=log,
		xmin=10,
		xmax=1000000,
		xminorticks=true,
		xlabel={Prime number of points $N$},
		xmajorgrids,
		ymode=log,
		ymin=1e-21,
		ymax=0.2,
		yminorticks=true,
		ylabel={Worst-case error $e_{N,s,\alpha,\mathbf{\gamma^{\alpha}}}(\bsz)$},
		ymajorgrids,
		axis background/.style={fill=white},
		legend style={at={(0.03,0.03)},anchor=south west,legend cell align=left,align=left,draw=white!15!black}
		]
		\addplot [color=mycolor1-fig,solid,line width=0.6pt,mark=o,mark options={solid},forget plot]
		table[row sep=crcr]{%
			61	0.00149439958925635\\
			127	0.000378368510339832\\
			251	9.88210549484549e-05\\
			509	2.61153104710953e-05\\
			1021	6.71842516204534e-06\\
			2039	1.80208152247799e-06\\
			4093	4.65026038105247e-07\\
			8191	1.23263487418752e-07\\
			16381	3.26116438429265e-08\\
			32749	8.22047166020136e-09\\
			65537	2.20802294936648e-09\\
			131071	5.77826218270183e-10\\
		};
		\addplot [color=mycolor2-fig,solid,line width=0.6pt,mark=triangle,mark options={solid},forget plot]
		table[row sep=crcr]{%
			61	0.00149310488780952\\
			127	0.000378025163460423\\
			251	9.83549112464933e-05\\
			509	2.58990873041785e-05\\
			1021	6.71067140250752e-06\\
			2039	1.79785995384656e-06\\
			4093	4.56835574955769e-07\\
			8191	1.21082504472805e-07\\
			16381	3.24302067118139e-08\\
			32749	8.21458630886382e-09\\
			65537	2.15209298283157e-09\\
			131071	5.74073503755406e-10\\
		};
		\addplot [color=mycolor3-fig,dashed,line width=0.8pt]
		table[row sep=crcr]{%
			61	0.000821207688295237\\
			127	0.000199923949806106\\
			251	5.38054724710845e-05\\
			509	1.37803280205813e-05\\
			1021	3.60427133388422e-06\\
			2039	9.50753168643576e-07\\
			4093	2.48322347233933e-07\\
			8191	6.52416593241094e-08\\
			16381	1.71631180583029e-08\\
			32749	4.5180224698751e-09\\
			65537	1.18705386164181e-09\\
			131071	3.12255207691986e-10\\
		};
		\addlegendentry{\footnotesize $\mathcal{O}(N^{-1.93})$};
		
		\addplot [color=mycolor4-fig,solid,line width=0.6pt,mark=o,mark options={solid},forget plot]
		table[row sep=crcr]{%
			61	1.31890072035355e-05\\
			127	1.58987039924798e-06\\
			251	1.97469759374849e-07\\
			509	2.51857896906118e-08\\
			1021	3.15196828759722e-09\\
			2039	4.15047377900296e-10\\
			4093	5.07051994499849e-11\\
			8191	6.63884006229567e-12\\
			16381	8.7038610682597e-13\\
			32749	1.04049654058917e-13\\
			65537	1.38384071515551e-14\\
			131071	1.93898467864484e-15\\
		};
		\addplot [color=mycolor5-fig,solid,line width=0.6pt,mark=triangle,mark options={solid},forget plot]
		table[row sep=crcr]{%
			61	1.31872396062089e-05\\
			127	1.57671548407281e-06\\
			251	1.97127579040481e-07\\
			509	2.4978536870921e-08\\
			1021	3.1282237987031e-09\\
			2039	4.14743959377927e-10\\
			4093	5.01230117892207e-11\\
			8191	6.53691277690266e-12\\
			16381	8.69491734535517e-13\\
			32749	1.0426407694528e-13\\
			65537	1.35888101776409e-14\\
			131071	1.93107796178009e-15\\
		};
		\addplot [color=darkgray,dashed,line width=0.8pt]
		table[row sep=crcr]{%
			61	7.25298178341489e-06\\
			127	8.22835242525562e-07\\
			251	1.08942067421865e-07\\
			509	1.33633292830535e-08\\
			1021	1.69312911046295e-09\\
			2039	2.17346886493411e-10\\
			4093	2.74782651095061e-11\\
			8191	3.50566616768412e-12\\
			16381	4.4814394956998e-13\\
			32749	5.73452423199041e-14\\
			65537	7.31641662376563e-15\\
			131071	9.35182187459677e-16\\
		};
		\addlegendentry{\footnotesize $\mathcal{O}(N^{-2.97})$};
		
		\addplot [color=mycolor6-fig,solid,line width=0.6pt,mark=o,mark options={solid},forget plot]
		table[row sep=crcr]{%
			61	1.71570590641982e-07\\
			127	9.79213829503343e-09\\
			251	6.05280448986461e-10\\
			509	3.70799528264642e-11\\
			1021	2.30967568395001e-12\\
			2039	1.49255601155731e-13\\
			4093	8.91018874934075e-15\\
			8191	5.71980813892941e-16\\
			16381	3.72462577724537e-17\\
			32749	2.20558579009817e-18\\
			65537	1.41789736887006e-19\\
			131071	9.208954847244e-21\\
		};
		\addplot [color=mycolor7-fig,solid,line width=0.6pt,mark=triangle,mark options={solid},forget plot]
		table[row sep=crcr]{%
			61	1.71568744921195e-07\\
			127	9.6396413752285e-09\\
			251	6.05067610284948e-10\\
			509	3.69415975569439e-11\\
			1021	2.27775310873231e-12\\
			2039	1.49063910750197e-13\\
			4093	8.87259728715516e-15\\
			8191	5.6834666481556e-16\\
			16381	3.71908768881375e-17\\
			32749	2.20719917867453e-18\\
			65537	1.77401477932725e-19\\
			131071	1.38581868894411e-20\\
		};
		\addplot [color=black,dashed,line width=0.8pt]
		table[row sep=crcr]{%
			61	9.43628097066574e-08\\
			127	5.06144001917286e-09\\
			251	3.34136382293727e-10\\
			509	1.99065222740048e-11\\
			1021	1.2386862842126e-12\\
			2039	7.84463778251665e-14\\
			4093	4.8671635720664e-15\\
			8191	3.05690735825353e-16\\
			16381	1.92509565934653e-17\\
			32749	1.21395954827099e-18\\
			65537	7.6249526084196e-20\\
			131071	4.80110359596462e-21\\
		};
		\addlegendentry{\footnotesize $\mathcal{O}(N^{-3.99})$};
		
		\end{axis}
		\end{tikzpicture}
		\caption{Weight sequence $\bsgamma=\{\gamma_j\}_{j=1}^s$ with $\gamma_j = 1/j^3$.}
	\end{subfigure}
	\vskip\baselineskip
	\hspace{-0.25cm}
	\centering
	\begin{subfigure}[b]{0.5\textwidth}
		\centering
		\begin{tikzpicture}
		
		\begin{axis}[%
		width=0.8\textwidth,
		height=0.8\textwidth,
		at={(0\textwidth,0\textwidth)},
		scale only axis,
		xmode=log,
		xmin=10,
		xmax=1000000,
		xminorticks=true,
		xlabel={Prime number of points $N$},
		xmajorgrids,
		ymode=log,
		ymin=0.00005,
		ymax=75000000,
		yminorticks=true,
		ylabel={Worst-case error $e_{N,s,\alpha,\mathbf{\gamma^{\alpha}}}(\bsz)$},
		ymajorgrids,
		axis background/.style={fill=white},
		legend style={at={(0.03,0.03)},anchor=south west,legend cell align=left,align=left,draw=white!15!black}
		]
		\addplot [color=mycolor1-fig,solid,line width=0.6pt,mark=o,mark options={solid},forget plot]
		table[row sep=crcr]{%
			61	4634568.14275443\\
			127	2225929.04353636\\
			251	1126266.7925863\\
			509	555387.957645679\\
			1021	276870.710531586\\
			2039	138642.365500963\\
			4093	69066.8405598768\\
			8191	34512.1587166647\\
			16381	17257.3222761242\\
			32749	8631.8002590184\\
			65537	4313.34402485901\\
			131071	2156.68123701018\\
		};
		\addplot [color=mycolor2-fig,solid,line width=0.6pt,mark=triangle,mark options={solid},forget plot]
		table[row sep=crcr]{%
			61	4600375.30070987\\
			127	2206558.01945219\\
			251	1091381.68903471\\
			509	545896.541812871\\
			1021	270899.820992591\\
			2039	136768.502878728\\
			4093	67813.3822476945\\
			8191	33980.3294370871\\
			16381	17015.8490051789\\
			32749	8493.16117397436\\
			65537	4220.34563915898\\
			131071	2116.69623873709\\
		};
		\addplot [color=mycolor3-fig,dashed,line width=0.8pt]
		table[row sep=crcr]{%
			61	2990243.94546141\\
			127	1434772.08534694\\
			251	725262.176082432\\
			509	357287.067169935\\
			1021	177943.5924273\\
			2039	89015.6966256517\\
			4093	44301.1139425467\\
			8191	22115.3549132576\\
			16381	11047.5325332233\\
			32749	5520.55476308333\\
			65537	2755.93306728485\\
			131071	1376.64970795731\\
		};
		\addlegendentry{\footnotesize $\mathcal{O}(N^{-1.00})$};
		
		\addplot [color=mycolor4-fig,solid,line width=0.6pt,mark=o,mark options={solid},forget plot]
		table[row sep=crcr]{%
			61	413.93405419742\\
			127	195.206651009568\\
			251	98.7022505324968\\
			509	48.387879678918\\
			1021	24.0374855168552\\
			2039	11.952481492279\\
			4093	5.90796181382728\\
			8191	2.90585785224707\\
			16381	1.50940871695611\\
			32749	0.69616394911822\\
			65537	0.355576604424663\\
			131071	0.172803975278562\\
		};
		\addplot [color=mycolor5-fig,solid,line width=0.6pt,mark=triangle,mark options={solid},forget plot]
		table[row sep=crcr]{%
			61	387.68835749897\\
			127	188.33126796064\\
			251	88.6293935032307\\
			509	44.6947242621373\\
			1021	22.2215313716621\\
			2039	11.2567575958708\\
			4093	5.48235380859927\\
			8191	2.66950455643445\\
			16381	1.26982104997148\\
			32749	0.613355781627567\\
			65537	0.290867478976662\\
			131071	0.134467648845215\\
		};
		\addplot [color=darkgray,dashed,line width=0.8pt]
		table[row sep=crcr]{%
			61	251.99743237433\\
			127	118.754643017961\\
			251	59.033162667634\\
			509	28.580986996412\\
			1021	13.9932149141727\\
			2039	6.8821484833433\\
			4093	3.36697226739308\\
			8191	1.65241414613151\\
			16381	0.81151685104713\\
			32749	0.398681258057918\\
			65537	0.195664494537203\\
			131071	0.0960889973865253\\
		};
		\addlegendentry{\footnotesize $\mathcal{O}(N^{-1.03})$};
		
		\addplot [color=mycolor6-fig,solid,line width=0.6pt,mark=o,mark options={solid},forget plot]
		table[row sep=crcr]{%
			61	16.4817904121816\\
			127	7.1339213237205\\
			251	3.53048216135207\\
			509	1.65587799355142\\
			1021	0.862069575307995\\
			2039	0.369188858912188\\
			4093	0.176484128087849\\
			8191	0.0761284846863732\\
			16381	0.0513233642958822\\
			32749	0.0140345767558548\\
			65537	0.00883360578843512\\
			131071	0.00368749136722201\\
		};
		\addplot [color=mycolor7-fig,solid,line width=0.6pt,mark=triangle,mark options={solid},forget plot]
		table[row sep=crcr]{%
			61	13.6349958910252\\
			127	6.55991367620095\\
			251	2.91143983397834\\
			509	1.34202433843196\\
			1021	0.616963198118981\\
			2039	0.264495412209109\\
			4093	0.11782156357491\\
			8191	0.0454267872304141\\
			16381	0.0205165247023423\\
			32749	0.00807881232233529\\
			65537	0.00339011176106638\\
			131071	0.00126953177703508\\
		};
		\addplot [color=black,dashed,line width=0.8pt]
		table[row sep=crcr]{%
			61	8.86274732916638\\
			127	3.72444749060631\\
			251	1.6644782754234\\
			509	0.721579519135469\\
			1021	0.316876026019911\\
			2039	0.139881495259051\\
			4093	0.0613749580281661\\
			8191	0.0270267435668881\\
			16381	0.0119108041643536\\
			32749	0.00525122800951793\\
			65537	0.00231244424734289\\
			131071	0.00101905832951069\\
		};
		\addlegendentry{\footnotesize $\mathcal{O}(N^{-1.18})$};
		
		\end{axis}
		\end{tikzpicture}
		\caption{Weight sequence $\bsgamma=\{\gamma_j\}_{j=1}^s$ with $\gamma_j = (0.95)^j$.}
	\end{subfigure}
	\begin{subfigure}[b]{0.5\textwidth}  
		\centering 
		\begin{tikzpicture}
		
		\begin{axis}[%
		width=0.8\textwidth,
		height=0.8\textwidth,
		at={(0\textwidth,0\textwidth)},
		scale only axis,
		xmode=log,
		xmin=10,
		xmax=1000000,
		xminorticks=true,
		xlabel={Prime number of points $N$},
		xmajorgrids,
		ymode=log,
		ymin=1e-17,
		ymax=1,
		yminorticks=true,
		ylabel={Worst-case error $e_{N,s,\alpha,\mathbf{\gamma^{\alpha}}}(\bsz)$},
		ymajorgrids,
		axis background/.style={fill=white},
		legend style={at={(0.03,0.03)},anchor=south west,legend cell align=left,align=left,draw=white!15!black}
		]
		\addplot [color=mycolor1-fig,solid,line width=0.6pt,mark=o,mark options={solid},forget plot]
		table[row sep=crcr]{%
			61	0.0465165526248944\\
			127	0.0148191830011895\\
			251	0.0058727305305099\\
			509	0.00192379711626869\\
			1021	0.000705641046409251\\
			2039	0.000267619872188979\\
			4093	0.000109693338426932\\
			8191	3.00231121813236e-05\\
			16381	1.07704271885695e-05\\
			32749	3.75700338331686e-06\\
			65537	1.24462279387403e-06\\
			131071	4.51703511089662e-07\\
		};
		\addplot [color=mycolor2-fig,solid,line width=0.6pt,mark=triangle,mark options={solid},forget plot]
		table[row sep=crcr]{%
			61	0.039824759913778\\
			127	0.0139177820120805\\
			251	0.00515106196298379\\
			509	0.0018047210006839\\
			1021	0.000634771498002138\\
			2039	0.000229431305471914\\
			4093	8.12095106611222e-05\\
			8191	2.71792226035835e-05\\
			16381	9.21398545032347e-06\\
			32749	3.22000725688966e-06\\
			65537	1.07027323165465e-06\\
			131071	3.56092132125456e-07\\
		};
		\addplot [color=mycolor3-fig,dashed,line width=0.8pt]
		table[row sep=crcr]{%
			61	0.0219036179525779\\
			127	0.00729621097254534\\
			251	0.00262762865131929\\
			509	0.000910499604200895\\
			1021	0.000320697325383512\\
			2039	0.0001137061275822\\
			4093	4.00059785537613e-05\\
			8191	1.41403028921087e-05\\
			16381	5.00299195463577e-06\\
			32749	1.77100399128931e-06\\
			65537	6.25984167487752e-07\\
			131071	2.2146729426821e-07\\
		};
		\addlegendentry{\footnotesize $\mathcal{O}(N^{-1.50})$};
		
		\addplot [color=mycolor4-fig,solid,line width=0.6pt,mark=o,mark options={solid},forget plot]
		table[row sep=crcr]{%
			61	0.000812666738807578\\
			127	9.81940889391407e-05\\
			251	3.28570588552176e-05\\
			509	3.81240298480626e-06\\
			1021	8.74476856808452e-07\\
			2039	2.22674510971568e-07\\
			4093	1.17064245837847e-07\\
			8191	7.2793315572828e-09\\
			16381	1.68737127146558e-09\\
			32749	3.55402606558459e-10\\
			65537	5.93305628202554e-11\\
			131071	1.42057789458795e-11\\
		};
		\addplot [color=mycolor5-fig,solid,line width=0.6pt,mark=triangle,mark options={solid},forget plot]
		table[row sep=crcr]{%
			61	0.000410357685758694\\
			127	7.67517402478254e-05\\
			251	1.59728964657848e-05\\
			509	3.11322261199288e-06\\
			1021	6.15555840136454e-07\\
			2039	1.23165153121391e-07\\
			4093	2.66789829034176e-08\\
			8191	4.72184600915742e-09\\
			16381	8.59553248673856e-10\\
			32749	1.77747729934381e-10\\
			65537	3.1619234247526e-11\\
			131071	5.83540914585413e-12\\
		};
		\addplot [color=darkgray,dashed,line width=0.8pt]
		table[row sep=crcr]{%
			61	0.000225696727167281\\
			127	4.08468402241577e-05\\
			251	8.34606234217235e-06\\
			509	1.60604570909169e-06\\
			1021	3.17010824776453e-07\\
			2039	6.32204544647337e-08\\
			4093	1.24576435615915e-08\\
			8191	2.47236673222663e-09\\
			16381	4.9143924029156e-10\\
			32749	9.77612514639098e-11\\
			65537	1.94025711236727e-11\\
			131071	3.85636023901073e-12\\
		};
		\addlegendentry{\footnotesize $\mathcal{O}(N^{-2.33})$};
		
		\addplot [color=mycolor6-fig,solid,line width=0.6pt,mark=o,mark options={solid},forget plot]
		table[row sep=crcr]{%
			61	3.08980585275085e-05\\
			127	1.15177025020475e-06\\
			251	4.2382745891417e-07\\
			509	1.26354325553213e-08\\
			1021	1.94413977073814e-09\\
			2039	3.17094430414221e-10\\
			4093	3.11128899856657e-10\\
			8191	3.4456527232764e-12\\
			16381	5.17768289303527e-13\\
			32749	6.81925404500222e-14\\
			65537	5.21276702013556e-15\\
			131071	8.10177946909353e-16\\
		};
		\addplot [color=mycolor7-fig,solid,line width=0.6pt,mark=triangle,mark options={solid},forget plot]
		table[row sep=crcr]{%
			61	7.04992016155101e-06\\
			127	7.52515546971171e-07\\
			251	7.99634213202026e-08\\
			509	8.96344021457356e-09\\
			1021	9.78759682617165e-10\\
			2039	1.09672246247277e-10\\
			4093	1.1156541804078e-11\\
			8191	1.25568756763142e-12\\
			16381	1.35453075691307e-13\\
			32749	1.70538909393908e-14\\
			65537	1.53537268291202e-15\\
			131071	1.64254060084748e-16\\
		};
		\addplot [color=black,dashed,line width=0.8pt]
		table[row sep=crcr]{%
			61	3.87745608885306e-06\\
			127	3.83123417756267e-07\\
			251	4.46144758627194e-08\\
			509	4.79009236152865e-09\\
			1021	5.3230515306048e-10\\
			2039	5.99827649971914e-11\\
			4093	6.65034383613962e-12\\
			8191	7.44487729592778e-13\\
			16381	8.35202296744406e-14\\
			32749	9.37964001666494e-15\\
			65537	1.05007582192922e-15\\
			131071	1.17788391396076e-16\\
		};
		\addlegendentry{\footnotesize $\mathcal{O}(N^{-3.16})$};
		
		\end{axis}
		\end{tikzpicture}
		\caption{Weight sequence $\bsgamma=\{\gamma_j\}_{j=1}^s$ with $\gamma_j = (0.7)^j$.}
	\end{subfigure}
	\vskip\baselineskip
	\begin{tikzpicture}
	\hspace{0.05\linewidth}
	\begin{customlegend}[
	legend columns=5,legend style={align=left,draw=none,column sep=1.5ex},
	legend entries={Korobov's CBC, standard fast CBC \quad, $\alpha=2$, $\alpha=3$, $\alpha=4$}
	]
	\addlegendimage{color=gray, mark=o,solid,line width=1.0pt,line legend}
	\addlegendimage{color=gray, mark=triangle,solid,line width=1.0pt}  
	\addlegendimage{color=mycolor-alpha1, only marks,mark=square*, solid, line width=5pt}
	\addlegendimage{color=mycolor-alpha2, only marks,mark=square*, solid, line width=5pt}
	\addlegendimage{color=mycolor-alpha3, only marks,mark=square*, solid, line width=5pt}
	\end{customlegend}
	\end{tikzpicture}
	\caption{Convergence results of the worst-case error $e_{N,s,\alpha,\bsgamma^{\alpha}}(\bsz)$ in the weighted space $E_{s,\bsgamma}^\alpha$ for smoothness parameters $\alpha=2,3,4$ and dimension $s=100$. The generating vectors $\bsz$ are constructed by component-by-component algorithms for prime $N$ using $V_{N,s,\bsgamma}$ 
	and $e_{N,s,\alpha,\bsgamma^{\alpha}}$, respectively, as quality functions.}
	\label{fig:cbc}
\end{figure} 

\begin{remark}
	As indicated in Section \ref{sec:impl-cbc}, Algorithm \ref{alg:cbc} can be implemented in a fast manner analogously to the standard component-by-component algorithm with the worst-case error $e_{N,s,\alpha,\bsgamma^{\alpha}}$ as the quality function. Our numerical tests confirm this observation. For the sake of brevity, we omit listing the timing results here. Indeed, there is no relevant difference 
	between the run-time of the fast implementations of the standard component-by-component algorithm and that of Algorithm \ref{alg:cbc}.
\end{remark}

\section{Conclusion} \label{sec:conc}

In this paper, we studied variants of component-by-component algorithms for the construction of lattice rules 
for numerical integration in weighted function spaces. We analyzed a component-by-component digit-by-digit (CBC-DBD) 
construction and a particular version of the CBC construction. Both algorithms considered are motivated by earlier 
work of Korobov, and both constructions are independent of the value of the smoothness parameter $\alpha$, which may 
be an advantage over the standard CBC constructions in the literature. We showed above that the lattice rules 
constructed by these algorithms yield an error convergence rate that can be arbitrarily close to the optimal 
convergence rate, and the error bounds can be made independent of the dimension if the coordinate weights 
satisfy suitable summability conditions. Moreover, we studied fast implementations of the new algorithms
and showed that the computational effort is of the same order of magnitude as that of the standard CBC construction. 
The lattice rules constructed proved to be competitive to lattice rules constructed by other common methods with 
respect to their integration error. The full generalization of the results on the CBC-DBD algorithm 
to arbitrary weights remains open for future research.

\section*{Acknowledgements}
P.~Kritzer and O.~Osisiogu gratefully acknowledge the support of the Austrian Science Fund (FWF): Project F5506, 
which is part of the Special Research Program ``Quasi-Monte Carlo Methods: Theory and Applications''.
D.~Nuyens likes to thank R.~Matthysen who studied the Korobov article \cite{Kor82,Kor82Eng} in his Master thesis \cite{Mat}.

\section*{Appendix}

\subsection*{The proof of Theorem \ref{thm:wce}}

\begin{proof}
	Recall that the worst-case error for the space $E_{s,\bsgamma}^{\alpha}$ is defined as
	\begin{equation*}
		e_{N,s,\alpha,\bsgamma}(\bsz)
		:=
		\sup_{\substack{f \in E_{s,\bsgamma}^{\alpha} \\ \|f\|_{E_{s,\bsgamma}^{\alpha}} \le 1}} | I(f) - Q_N(f,\bsz)|
		.
	\end{equation*}
	The combination of~\eqref{eq:Holder-infty} and the definition of $\|f\|_{E_{s,\bsgamma}^{\alpha}}$ then leads to the estimate
	\begin{equation*}
		e_{N,s,\alpha,\bsgamma}(\bsz)
		\le
		\sup_{\substack{f \in E_{s,\bsgamma}^{\alpha} \\ \|f\|_{E_{s,\bsgamma}^{\alpha}} \le 1}} \|f\|_{E_{s,\bsgamma}^{\alpha}} 
		\sum_{\substack{\bszero \ne \bsm \in \Z^s \\ \bsm \cdot \bsz \equiv 0 \tpmod{N}}} r_{\alpha,\bsgamma}^{-1}(\bsm)
		\le
		\sum_{\substack{\bszero \ne \bsm \in \Z^s \\ \bsm \cdot \bsz \equiv 0 \tpmod{N}}} r_{\alpha,\bsgamma}^{-1}(\bsm) .
	\end{equation*}
	Consider now the function $g$ which has Fourier coefficients $\hat{g}(\bsm) = r_{\alpha,\bsgamma}^{-1}(\bsm)$, then $\|g\|_{E_{s,\bsgamma}^{\alpha}} = 1$ and 
	\begin{equation*}
		Q_N(g,\bsz) - I(g) 
		=
		\sum_{\substack{\bszero \ne \bsm \in \Z^s \\ \bsm \cdot \bsz \equiv 0 \tpmod{N}}} r_{\alpha,\bsgamma}^{-1}(\bsm).
	\end{equation*}
	Hence, the proven upper bound is attained by $g$ such that the claimed identity follows. 
\end{proof}

\subsection*{The proof of Theorem \ref{thm:existence-T}}

\begin{proof}
	For $\bszero \ne \bsm \in M_{N,s}$ there is at least one $m_j$ for which $\gcd(m_j, N) = 1$ (since $N$ is prime) 
	and therefore the multiplicative inverse $m_j^{-1} \pmod{N}$ exists. Without loss of generality, assume that this $j$ equals $s$, then
	\begin{equation*}
		\sum_{z_1,\ldots,z_s=1}^{N-1} \delta_N(\bsm \cdot \bsz)
		\le
		\sum_{z_1,\ldots,z_{s-1}=1}^{N-1} \underbrace{\sum_{z_s=0}^{N-1} \delta_N(\bsm \cdot \bsz)}_{= 1}
		=
		(N-1)^{s-1}
		,
	\end{equation*}
	since $(m_1 z_1 + \cdots + m_{s-1} z_{s-1}) + m_s z_s \equiv 0 \pmod{N}$ has exactly one solution 
	$z_s \in \bbZ_N$, namely $z_s \equiv - m_s^{-1} (m_1 z_1 + \cdots + m_{s-1} z_{s-1}) \pmod{N}$. See also \cite[Proof of Proposition 20]{Kor63}.
	By the standard averaging argument there exists a $\bsz^\ast \in \{1,\ldots,N-1\}^s$ which satisfies
	\begin{align} \label{eq:average}
		T(N, \bsz^\ast)
		=
		\min_{\bsz \in \{1,\ldots,N-1\}^s} T(N,\bsz)
		&\le
		\frac{1}{(N-1)^s} \sum_{z_1,\ldots,z_s=1}^{N-1} T(N,\bsz) \nonumber \\
		&=
		\frac{1}{(N-1)^s} \sum_{\bszero \ne \bsm \in M_{N,s}} \frac{1}{r_{1,\bsgamma}(\bsm)} 
		\sum_{z_1,\ldots,z_s=1}^{N-1} \delta_N(\bsm \cdot \bsz) \nonumber \\
		&\le
		\frac{1}{N-1} \sum_{\bszero \ne \bsm \in M_{N,s}} \frac{1}{r_{1,\bsgamma}(\bsm)}
		\le
		\frac{2}{N} \sum_{\bszero \ne \bsm \in M_{N,s}} \frac{1}{r_{1,\bsgamma}(\bsm)} .
	\end{align}
	Note that we can write
	\begin{align*}
		\sum_{\bszero \ne \bsm \in M_{N,s}} \frac{1}{r_{1,\bsgamma}(\bsm)}
		&=
		\sum_{\emptyset\neq\setu\subseteq \{1 \mcol s\}} \sum_{\bsm_\setu \in M_{N,|\setu|}^\ast} \frac{1}{r_{1,\bsgamma_\setu}(\bsm_\setu)}
		=
		\sum_{\emptyset\neq\setu\subseteq \{1 \mcol s\}} \gamma_\setu \sum_{\bsm_\setu \in M_{N,|\setu|}^\ast} \prod_{j\in\setu} \frac{1}{\abs{m_j}} \\	
		&=
		\sum_{\emptyset\neq\setu\subseteq \{1 \mcol s\}} \gamma_\setu \prod_{j\in\setu} \sum_{m_j \in M_{N,1}^\ast} \frac{1}{\abs{m_j}}
		= 
		\sum_{\emptyset\neq\setu\subseteq \{1 \mcol s\}} \gamma_\setu \prod_{j\in\setu} \left( 2\sum_{m_j=1}^{N-1}\frac{1}{m_j} \right) . 
	\end{align*}
	Observing that the inequality
	\begin{equation} \label{eq:estimate_ln}
		\sum_{m=1}^{N-1} \frac{1}{m} - 1 = \sum_{m=1}^{N-2} \frac{1}{m+1}
		<
		\sum_{m=1}^{N-2} \int_{m}^{m+1} \frac1x \rd x 
		= 
		\int_{1}^{N-1} \frac1x \rd x 
		= 
		\ln(N-1) < \ln N
	\end{equation} 
	holds, we obtain the estimate
	\begin{equation*}
		\sum_{\bszero \ne \bsm \in M_{N,s}} \frac{1}{r_{1,\bsgamma}(\bsm)}
		\le 
		\sum_{\emptyset\neq\setu\subseteq \{1 \mcol s\}} \gamma_\setu \prod_{j\in\setu} (2(1 + \ln N))
		=
		\sum_{\emptyset\neq\setu\subseteq \{1 \mcol s\}} \gamma_\setu  (2(1 + \ln N))^{\abs{\setu}}.
	\end{equation*}
	Combining this with \eqref{eq:average} yields the existence of a good generating vector $\bsz \in \Z^s$ as claimed.
\end{proof}

\subsection*{The proof of Lemma \ref{lemma:expr_target_function}}

\begin{proof}
	Firstly, we note that by Lemma \ref{lem:ln-sin} with $a=1$ the function $\ln (\sin^{-2}(\pi x) )$ has the expansion
	\begin{equation*}
		\ln (\sin^{-2}(\pi x) )
		=
		-2\ln (\sin(\pi x) )
		=
		\ln 4 + \sum_{m \in \Z_\ast} \frac{\rme^{2\pi \icomp m x}}{\abs{m}}
	\end{equation*}
	for $x \in (0,1)$, and therefore
	\begin{align} \label{eq:expr_target_function}
		\ln (\sin^{-2}(\pi x) )
		&=
		\ln 4 + \sum_{m\in M_{N,1}^\ast} \frac{\rme^{2\pi \icomp m x}}{\abs{m}} + r(x)
	\end{align}
	with remainder $r(x)$ bounded as follows,
	\begin{align*}
		|r (x)|
		&=
		\left| \sum_{m=N}^{\infty} \frac{\rme^{2\pi \icomp m x}}{m} + \sum_{m=N}^{\infty} \frac{\rme^{-2\pi \icomp m x}}{m} \right|
		\le
		2 \left| \sum_{m=N}^{\infty} \frac{\rme^{2\pi \icomp m x}}{m} \right| .
	\end{align*}
	If we make use of the identity
	\begin{align*}
		\frac{\rme^{2\pi \icomp m x}}{m}
		&=
		\frac{1}{\rme^{2\pi \icomp x} - 1}
		\left( \frac{\rme^{2\pi \icomp (m+1) x}}{m+1} - \frac{\rme^{2\pi \icomp m x}}{m} + \frac{\rme^{2\pi \icomp (m+1) x}}{m (m+1)} \right) ,
	\end{align*}
	summing over the $m \ge N$ yields a telescoping sum such that
	\begin{align*}
		\left| \sum_{m=N}^{\infty} \frac{\rme^{2\pi \icomp m x}}{m} \right|
		&=
		\frac{1}{|\rme^{2\pi \icomp x} - 1|} \left|  \sum_{m=N}^{\infty}\left(
		\frac{\rme^{2\pi \icomp (m+1) x}}{m+1} - \frac{\rme^{2\pi \icomp m x}}{m} + \frac{\rme^{2\pi \icomp (m+1) x}}{m (m+1)}\right) \right| \\
		&= 
		\frac{1}{|\rme^{2\pi \icomp x} - 1|} \left| \frac{-\rme^{2 \pi \icomp N x}}{N} + \sum_{m=N}^{\infty} \frac{\rme^{2\pi \icomp (m+1) x}}{m (m+1)} \right| \\
		&\le 
		\frac{1}{2 \sin(\pi x)}\left(\frac1N + \sum_{m=N}^{\infty} \frac{1}{m (m+1)} \right)
		=
		\frac{1}{N \sin(\pi x)},
	\end{align*}
	where we used that $\rme^{2\pi \icomp m x}$ is bounded, 
	$\sum_{m=N}^{\infty} \frac{1}{m(m+1)}= \sum_{m=N}^{\infty} ( \frac1m - \frac{1}{m+1} ) = \frac1N$, and that
	\begin{align*}
		|\rme^{2\pi \icomp x} - 1|
		&=
		|(\cos(\pi x) + \icomp \sin(\pi x))^2 - 1|
		=
		| \cos^2(\pi x) - \sin^2(\pi x) + 2 \icomp \cos( \pi x) \, \sin(\pi x) - 1| \\
		&=
		|2 \sin(\pi x) \, (\icomp \cos(\pi x) - \sin( \pi x))|
		= 
		2 \sin(\pi x) \sqrt{\cos^2(\pi x) + \sin^2(\pi x)}
		=
		2 \sin(\pi x) .
	\end{align*}
	Since for $x \in [0,\tfrac12]$ we have that $\sin(\pi x) \ge 2 x = 2 \|x\|$, the symmetry of $\sin(\pi x)$ and $\|x\|$ around $\tfrac12$
	implies that $\sin(\pi x) \ge 2 \|x\|$ for all $x \in [0,1]$. This then yields that
	\begin{align*}
		|r(x)| 
		&\le
		\frac{2}{N \sin(\pi x)} 
		\le \frac{1}{N \|x\|}
		\quad \text{and thus} \quad
		r(x) = \frac{\tau(x)}{N \|x\|}
	\end{align*}
	for some $\tau(x) \in \RR$ with $|\tau(x)| \le 1$. This together with the expression in \eqref{eq:expr_target_function} yields the claim.
\end{proof}

\bigskip

\bigskip

\begin{small}
	\noindent\textbf{Authors' addresses:}\\
	
	\noindent Adrian Ebert\\
	Department of Computer Science\\
	KU Leuven\\
	Celestijnenlaan 200A, 3001 Leuven, Belgium.\\
	\texttt{adrian.ebert@cs.kuleuven.be}
	
	\medskip
	
	\noindent Peter Kritzer\\
	Johann Radon Institute for Computational and Applied Mathematics (RICAM)\\
	Austrian Academy of Sciences\\
	Altenbergerstr. 69, 4040 Linz, Austria.\\
	\texttt{peter.kritzer@oeaw.ac.at}
	
	\medskip
	
	\noindent Dirk Nuyens\\
	Department of Computer Science\\
	KU Leuven\\
	Celestijnenlaan 200A, 3001 Leuven, Belgium.\\
	\texttt{dirk.nuyens@cs.kuleuven.be}
	
	\medskip
	
	\noindent Onyekachi Osisiogu\\
	Johann Radon Institute for Computational and Applied Mathematics (RICAM)\\
	Austrian Academy of Sciences\\
	Altenbergerstr. 69, 4040 Linz, Austria.\\
	\texttt{onyekachi.osisiogu@ricam.oeaw.ac.at}
	
\end{small}

\end{document}